\title{\bf Cyclic Homology of Strong Smash Product Algebras}
\author{Jiao ZHANG and Naihong HU }
\date{}
\newcommand{\Tor}{\mathrm{Tor}}
\newcommand{\Tot}{\mathrm{Tot}}
\newcommand{\Hh}{\mathrm{HH}}
\newcommand{\Hc}{\mathrm{HC}}
\newcommand{\Hom}{\mathrm{Hom}}
\newcommand{\Ho}{\mathrm{H}}
\def\P{\mathcal {P}}
\newcommand{\adots}{\mathinner{\mkern2mu%
\raisebox{0.1em}{.}\mkern2mu\raisebox{0.4em}{.}%
\mkern2mu\raisebox{0.7em}{.}\mkern1mu}}
\def\ot{\otimes}
\def\ep{\varepsilon}
\def\ra{\rightarrow}
\def\de{\delta}
\def\sg{\sigma}
\def\pa{\partial}
\def\De{\Delta}
\newtheorem{theorem}{Theorem}[section]
\newtheorem{lemma}[theorem]{Lemma}
\newtheorem{prop}[theorem]{Proposition}
\newtheorem{coro}[theorem]{Corollary}
\theoremstyle{definition}
\newtheorem{defi}[theorem]{Definition}
\newtheorem{example}[theorem]{Example}
\newtheorem{remark}[theorem]{Remark}
\begin{document}

\maketitle
\begin{abstract}
For any strong smash product algebra $A\#_{_R}B$ of two algebras $A$
and $B$ with a bijective morphism $R$ mapping from $B\ot A$ to $A\ot
B$, we construct a cylindrical module $A\natural B$ whose diagonal
cyclic module $\Delta_{\bullet}(A\natural B)$ is graphically proven
to be isomorphic to
 $C_{\bullet}(A\#_{_R}B)$ the cyclic module of the algebra.
 A spectral sequence is established to converge to the cyclic homology
of $A\#_{_R}B$. Examples are provided to show how our results work.
Particularly, the cyclic homology of the Pareigis' Hopf algebra is
obtained in the way.
\end{abstract}

{\small\noindent{\textbf{Keywords}}: Cyclic homology, strong smash product algebra. \\
{\textbf{MSC(2000)}}: 19D55, 16S40.}

\section*{Introduction}
Calculating cyclic homology of the crossed product algebra is an
attractive problem studied extensively in cyclic homology theory.
When $G$ is a
 discrete group or a compact Lie
 group and $A$ is an algebra or a  $C^{\infty}$ manifold acted by $G$, the cyclic homology of the
 crossed product algebra $A\rtimes G$ is considered by  B.L. Feigin and
B.L. Tsygan \cite{FT}, J.L. Brylinski \cite{Br}, V. Nistor \cite{N},
and E. Getzler and J.D.S. Jones \cite{GJ}. When $A$ is an $H$-module
algebra, where $H$ is a Hopf algebra with an invertible antipode, R.
Akbarpour and M. Khalkhali \cite{AK} investigated the cyclic
homology of the crossed product algebra $A\rtimes H$. Their results
generalize the work of Getzler and Jones in \cite{GJ}.

In recent decades there have appeared many kinds of products of
different types of algebras in the research of Hopf algebras, for
instance, the crossed product, or called (classical) smash product,
of a Hopf algebra and its module algebra, Takeuchi's smash product
\cite{T1} of a left comodule algebra and a left module algebra where
the action and the coaction are taken over one Hopf algebra, the
tensor product of two algebras in the natural sense or in a braided
tensor category, the (generalized) Drinfeld double, double
crossproduct of Hopf algebras,  etc. These concepts are closely
related with the factorization of an algebra into two subalgebras.
The algebra factorization is described by S. Majid \cite{M}, the
generalized factorization problem is stated by S. Caenepeel et al.
\cite{CBMZ}. A `generalized braiding', which is quasitriangular and
normal, is associated closely with the algebra factorization. When
it is a bijection, we call the product algebra a \textit{strong
smash product algebra}.

In this paper, we generalize both works of Getzler and Jones
\cite{GJ}, and of Akbarpour and Khalkhali \cite{AK} to strong smash
product algebras.
 Indeed, the crossed product algebras discussed in \cite{GJ} and
 \cite{AK} are special examples of the strong smash product algebras.
 We organize this paper as follows. In Section \ref{111}, we give the
explicit definition of the strong smash product algebra $A\#_{_R}B$.
In Section \ref{222}, we construct a cylindrical module $A\natural
B$. Using diagrammatical presentations we prove that
$\Delta_{\bullet}(A\natural B)$ the cyclic module related to the
diagonal of $A\natural B$ is isomorphic to the cyclic module of
$A\#_{_R}B$. In Section \ref{33}, we recall some notations and apply
the generalized Eilenberg-Zilber theorem for cylindrical modules due
to Getzler and Jones \cite{GJ}. In Section \ref{44}, we construct a
spectral sequence converging to the cyclic homology of $A\#_{_R}B$.
In Section \ref{55}, we apply our theorems to Majid's double
crossproduct of Hopf algebras after showing that they pertain to the
class of strong smash product algebras. As any Drinfeld's quantum
double has a double crossproduct structure (see \cite{M}), the
notion of strong smash product algebras does cover a wild range of
the recent interesting examples, for instance, the two-parameter or
multiparameter quantum groups, and the pointed Hopf algebras arising
from Nichols algebras of diagonal type (see \cite{ARS, AS, BW, BGH,
H, HP, HRZ, HW, PHR} and references therein). Besides these, another
concrete example for the computation of the cyclic homology of the
Pareigis' Hopf algebra $\mathcal P$ is given to illuminate our
results.

We assume that $k$ is a field containing $\mathbb{Q}$ in the whole
paper unless otherwise stated. Every algebra in this paper is
assumed to be a unital associative $k$-algebra.

\section{\label{111}Strong smash product algebra}

 Majid defined in his book \cite{M} an algebra
factorization. A unital and associative algebra $X$
\textit{factorizes} through its subalgebras $A$ and $B$, if the
product map defines a linear isomorphism $A\ot B\cong X$. The
necessary and sufficient conditions for the existence of an algebra
factorization is the existence of a linear map $R$ from $B\ot A$ to
$A\ot B$, which is quasitriangular and normal. In \cite{CBMZ}, the
algebra which can be factorized is called a smash product and
denoted by $A \#_{_R} B$. In addition, if $R$ is also an isomorphism
of vector spaces, we call $A\#_{_R} B$ a {\it strong} smash product
algebra. The explicit definitions are as follows:

\begin{defi} Let $A$ and $B$ be two algebras, and $R: B\ot A\ra A\ot
B$ be a linear map. $R$ is called \textit{quasitriangular} if it
obeys
\begin{gather*}
R\circ (m\ot id)=(id\ot m)R_{12}R_{23},\\
R\circ (id\ot m)=(m\ot id)R_{23}R_{12},
\end{gather*}
where $m$ is the product map, $R_{12}=R\ot id$ and $R_{23}=id\ot R$.

$R$ is called \textit{normal} if it obeys
\begin{gather*}
R(1\ot a)=a\ot 1, \quad\forall~ a\in A, \\
R(b\ot 1)=1\ot b, \quad\forall~ b\in B.
\end{gather*}
 The \textit{smash product
algebra} of $A$ and $B$ with a quasitriangular and normal $R$,
denoted by $A\#_{_R}B$, is defined to be $A\ot B$ as a vector space
equipped with product
$$(a\ot b)(a'\ot b')=aR(b\ot a')b', \quad \forall~a, a'\in A,~b, b'\in B.$$
 \end{defi}
 The smash product algebra
 $A\#_{_R}B$ defined above
 is a unital associative algebra with the unit $1_A\ot 1_B$.
 The product of $A \#_{_R} B$ appeared first in
\cite{vv}, where a sufficient condition is given for the product to
be associative.

\begin{defi} The smash product algebra $A\#_{_R}B$ is said to be
\textit{strong},
if $R$ is invertible.
\end{defi}

\begin{prop} \label{1.2} $A\#_{_R}B$ is a strong smash product algebra if and
only if $B\#_{_{R^{-1}}}A$ is a strong smash product algebra.

Indeed,
 $R$ is
quasitriangular (resp. normal) if and only if $R^{-1}$ is
quasitriangular (resp. normal).
\end{prop}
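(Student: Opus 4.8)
The plan is to deduce the biconditional about strong smash products from the finer ``Indeed'' assertion, so I would prove the latter first. It suffices to establish one direction of each equivalence: since $(R^{-1})^{-1}=R$, once we know that quasitriangularity (resp.\ normality) of a braiding is inherited by its inverse, applying this to $R^{-1}$ yields the converse automatically. Writing $S=R^{-1}\colon A\ot B\ra B\ot A$, the main biconditional then follows formally: $A\#_{_R}B$ strong means $R$ is invertible, quasitriangular and normal, whence $S$ is invertible with $S^{-1}=R$ and, by the ``Indeed'' part, quasitriangular and normal, so $B\#_{_{R^{-1}}}A$ is a strong smash product; the reverse implication is the same statement applied to $S$.

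Normality is immediate: applying $S$ to $R(1\ot a)=a\ot 1$ and to $R(b\ot 1)=1\ot b$ gives $S(a\ot 1)=1\ot a$ and $S(1\ot b)=b\ot 1$, which are exactly the normality conditions for $S$ once the roles of $A$ and $B$ in the definition are interchanged.

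For quasitriangularity the strategy is to cancel the outermost $R$ in each identity by pre-composing with $S$, and then to correct the domain by post-composing with the inverse of the two-fold braiding, using the bookkeeping facts $R_{ij}^{-1}=(R^{-1})_{ij}=S_{ij}$ together with $(R_{12}R_{23})^{-1}=S_{23}S_{12}$ and $(R_{23}R_{12})^{-1}=S_{12}S_{23}$. Concretely, starting from $R\circ(m\ot\id)=(\id\ot m)R_{12}R_{23}$ I would apply $S$ on the left to get $m\ot\id=S(\id\ot m)R_{12}R_{23}$, and then apply $S_{23}S_{12}$ on the right; since $R_{12}R_{23}S_{23}S_{12}=\id$ by telescoping (each adjacent pair collapses to the identity on the relevant triple tensor factor), the right-hand side reduces to $S(\id\ot m)$ and we reach $S(\id\ot m)=(m\ot\id)S_{23}S_{12}$, the second quasitriangularity identity for $S$. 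The companion identity $S(m\ot\id)=(\id\ot m)S_{12}S_{23}$ is obtained in the same way from $R\circ(\id\ot m)=(m\ot\id)R_{23}R_{12}$.

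I expect the only genuine difficulty to be notational rather than conceptual: one must track carefully which tensor legs each of $R_{ij}$, $S_{ij}$, $m_A$ and $m_B$ acts on, since domains and codomains change at every step, and in particular the two product maps $m_A$, $m_B$---which are \emph{not} invertible---must be carried through the manipulation untouched, the inverses being taken only of $R$ and of the $R_{ij}$. Once the leg bookkeeping is fixed the telescoping is purely formal. A more conceptual alternative is available: a direct computation shows that the multiplication map $B\ot A\ra A\#_{_R}B$ is precisely $R$, so it is bijective exactly when $R$ is invertible, whence $A\#_{_R}B$ also factorizes through $B$ and $A$; the associated twisting map is then $R^{-1}$, and the factorization theorem of Majid quoted above forces $R^{-1}$ to be quasitriangular and normal.
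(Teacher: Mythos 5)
Your proof is correct and takes essentially the same route as the paper: the paper also derives quasitriangularity of $R^{-1}$ by inverting both sides of the two quasitriangularity identities for $R$ (your telescoping against $S_{23}S_{12}$, resp.\ $S_{12}S_{23}$, is exactly the paper's composition with $(R_{12}R_{23})^{-1}=R_{23}^{-1}R_{12}^{-1}$, resp.\ $(R_{23}R_{12})^{-1}=R_{12}^{-1}R_{23}^{-1}$), and it likewise treats normality as immediate. Your closing alternative via Majid's factorization theorem is a nice conceptual bonus not present in the paper, but the core argument coincides.
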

\begin{proof} Since $R$ is invertible with $R^{-1}:A\ot B\ra B\ot A$,
we have
\begin{gather*}
R(m_B\ot id)=(id\ot m_B)R_{12}R_{23}, \Leftrightarrow (m_B\ot
id)(R_{12}R_{23})^{-1}=R^{-1}(id\ot m_B),\\
\Leftrightarrow R^{-1}(id\ot m_B)=(m_B\ot
id)R_{23}^{-1}R_{12}^{-1},\\
R(id \ot m_A)=( m_A\ot id)R_{23}R_{12}, \Leftrightarrow (id \ot
m_A)(R_{23}R_{12})^{-1}=R^{-1}( m_A\ot id),\\
\Leftrightarrow R^{-1}( m_A\ot id)=(id \ot
m_A)R_{12}^{-1}R_{23}^{-1}.
\end{gather*}
 The normalization conditions are clear.
\end{proof}

It proves very convenient to do computations using diagrammatical
presentations. Present the multiplication of an algebra by
$\xygraph{!{0;/r1.0pc/:}[u(0.5)]!{\xcaph-@(1.5)}[l(0.5)][d(0.75)]!{\xcapv[0.5]@(0)}}$
and  $R$ from $B\ot A$ to $A\ot B$ by
$\xygraph{!{0;/r1.0pc/:}[u(0.5)]!{\xoverv}[d(0.5)][l(0.3)]{
A}[r(1.5)]B[uu][r(0.3)]A[l(1.5)]B}$. Thus $R^{-1}$ can be presented
by $\xygraph{!{0;/r1.0pc/:}[u(0.5)]!{\xunderv}[d(0.5)][l(0.3)]{
B}[r(1.5)]A[uu][r(0.3)]B[l(1.5)]A}$. The quasitriangular conditions
can be diagrammatically expressed as follows:
$$\xymatrix{\includegraphics[scale=0.5]{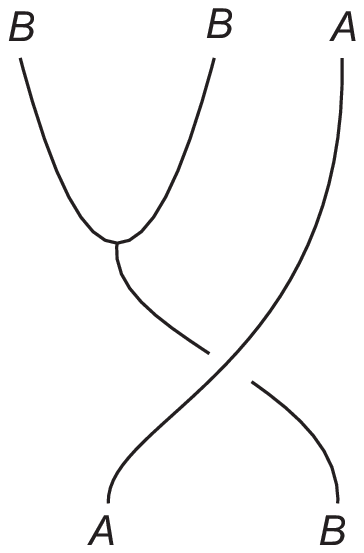}\ar@{}[r]|-{\cong}&\includegraphics[scale=0.5]{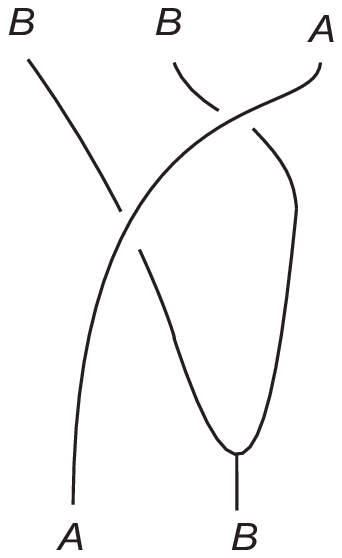}}$$
$$\xymatrix{\includegraphics[scale=0.5]{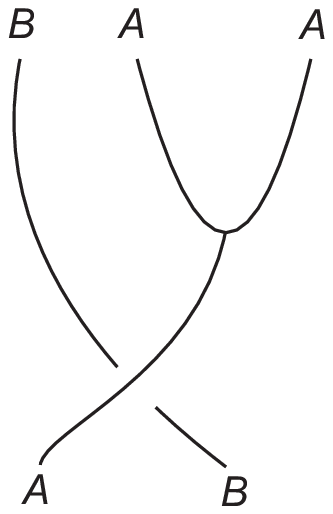}\ar@{}[r]|-{\cong}&\includegraphics[scale=0.5]{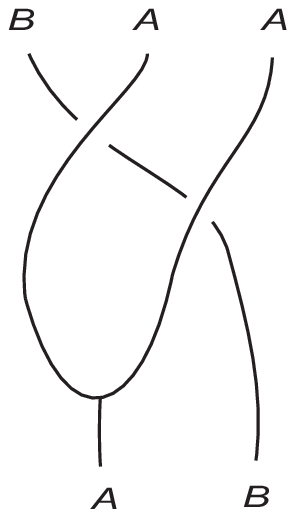}}$$

The concept of smash product algebra $A\#_{_R}B$ recovers the
crossed product algebra (or called classical smash product algebra)
$A\rtimes H$ and Takeuchi's smash product algebra $A\# B$ (defined
in \cite{T1}) where $H$ is a Hopf algebra, $A$ is an $H$-module
algebra and $B$ is an $H$-comodule algebra. The two subalgebras play
different roles in $A\rtimes H$ and  $A\# B$. One algebra produces
action on the other. However, in the strong smash product algebra
$A\#_{_R}B$, the status of $A$ and $B$ is equal. They act on each
other. The strong smash product algebra $A\#_{_R}B$ is a more
natural concept, as in physics the general principle is that every
action has a `reaction'.

Many smash product algebras are strong smash product algebras.

\begin{example} The tensor
product of two algebras in a braided tensor category is a strong
smash product algebra. Here $R$ is deduced directly from the
braiding in that category, so $R$ is invertible.
\end{example}

\begin{example}\label{eg ch1.5}
 Let $H$ be a Hopf algebra with an invertible antipode $S$. $A$ is a left
 $H$-module algebra and $B$ is a left $H$-comodule algebra.
  Takeuchi's  smash product $A\#B$ is an algebra
  with the multiplication $(a\#b)(a'\#b')=a(b_{[-1]}.a')\#b_{[0]}b'$ and the unit
  $1_A\ot 1_B$, where $b\mapsto b_{[-1]}\ot b_{[0]}$ is the left
  $H$-comodule structure map for $a, a'\in A$ and $b, b'\in B$. When $B=H$, $A\#B=A\rtimes H$ is
   the  crossed product algebra. Define $R:B\ot A\ra A\ot B$ by
$$
R(b\ot a)=b_{[-1]}.a\ot b_{[0]}.
$$
One can check that $R$ is quasitriangular and normal
through the definition of the module algebra and the comodule
algebra. $R$ has the inverse defined by $$R^{-1}(a\ot b)=b_{[0]}\ot
S^{-1}(b_{[-1]}).a,$$ for all $a\in A$ and $b\in B$.

Hence, the crossed product algebras discussed in \cite{GJ} and
\cite{AK} are special examples of our strong smash product algebras.
\end{example}

\section{\label{222}Paracyclic modules and cylindrical modules}
\noindent \textbf{2.1} \quad From Getzler and Jones' point of view,
all the operators of a cyclic module can be generated by only two
operators, i.e., the last face map and the extra degeneracy map.
Hence we can give an equivalent definition for cyclic modules. In
this subsection, $k$ can be a commutative ring.
\begin{defi}
 A \textit{cyclic module} is a sequence of $k$-modules $\{C_n\}_{n\geq
0}$ which is endowed for each $n$ with two $k$-linear maps
$d_n:C_n\ra C_{n-1}$ and $s_{-1}:C_{n}\ra C_{n+1}$, such that $d_n
s_{-1}$ is invertible, and by setting
\begin{align}
&t_n:=d_n s_{-1}:C_n\ra C_n,\label{1}\\
\begin{split}\label{2}
&d_i:=t_{n-1}^{-(n-i)}d_{n}t^{n-i}_{n}:C_n\ra C_{n-1},\quad \text{for}~0\leq i\leq n,\\
&s_i:=t^{i+1}_{n+1}s_{-1}t^{-(i+1)}_{n}:C_n\ra C_{n+1},\quad
\text{for}~0\leq i\leq n,
\end{split}
\end{align}
for any $i, j\in\mathbb{N}$, the following relations hold
\begin{align}
\begin{split}\label{3}
&d_id_j=d_{j-1}d_i \quad\text{for}~ i<j,\\
&s_is_j=s_{j+1}s_i \quad\text{for}~ i\leq j,\\
&d_is_j=\begin{cases} s_{j-1}d_i \quad\text{for}~ i<j,\\
id  \quad\text{for}~ i=j,i=j+1,\\
s_jd_{i-1} \quad\text{for}~ i>j+1.
\end{cases}
\end{split}\\
&t_{n}^{n+1}=id.\label{4}
\end{align}
$d_i$'s are called  \textit{face} maps and $s_i$'s are called
\textit{degeneracy} maps for $i\geq 0$, $t$ is called the
\textit{cyclic operator}. $s_{-1}$ is called the \textit{extra
degeneracy map} and $d_n:C_n\ra C_{n-1}$ is called the \textit{last
face} map for $C_n$.
\end{defi}

Therefore, a cyclic module can be regarded as an underlying
simplicial module $\{C_n\}_{n\geq 0}$, whose face maps, degeneracy
maps and cyclic operators are generated by the last face map $d_n$
and the extra degeneracy map $s_{-1}$ for each $C_n$ in the way
expressed in  \eqref{1} and \eqref{2} satisfying \eqref{3} and
\eqref{4}.

If the condition \eqref{4} is replaced by
\begin{equation}\label{5}
d_0t_n=d_n,\quad s_0t_n=t_{n+1}^2s_n,
\end{equation}
then that sequence of $k$-modules is called a \textit{paracyclic
module}. In fact, the equalities in \eqref{5} are  consequences of
the cyclicity of the invertible operator $t$, that is, from
\eqref{4}, one can get \eqref{5}.

For all $n$, set the following operators
\begin{equation}\label{6}\begin{split}
&\mathrm{b}=\sum_{i=0}^{n}(-1)^id_i:C_n\ra C_{n-1},\\
&\mathrm{T}=t^{n+1}:C_n\ra C_n,\\
&\mathrm{N}=\sum_{i=0}^{n}(-1)^{in}t^{i}:C_n\ra C_n,\\
&\mathrm{B}=(1+(-1)^{n}t)s_{-1}\mathrm{N}: C_{n}\ra C_{n+1}.
\end{split}
\end{equation}

\begin{lemma}[\cite{GJ}]\label{2.2} We have the equalities
$$\mathrm{bT=Tb}, ~\mathrm{bB+Bb=1-T}.$$
\end{lemma}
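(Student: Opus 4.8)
The plan is to reduce both equalities to four structural identities among the generators and then assemble them. First I would pass to the sign-twisted cyclic operator $\lambda:=(-1)^nt$ on $C_n$, which is the natural variable here: one has $\mathrm{N}=\sum_{i=0}^n\lambda^i$, one has $\mathrm{T}=t^{n+1}=\lambda^{n+1}$ (the extra sign $(-1)^{n(n+1)}$ being trivial), and the prefactor in $\mathrm{B}$ becomes $1+(-1)^nt=1-\lambda$ when read on $C_{n+1}$. Writing $\mathrm{b}'=\sum_{i=0}^{n-1}(-1)^id_i$ for the truncated face operator, the target then splits into four claims: (R1) $\mathrm{b}(1-\lambda)=(1-\lambda)\mathrm{b}'$; (R2) $\mathrm{b}'\mathrm{N}=\mathrm{N}\mathrm{b}$; (R3) $\mathrm{b}'s_{-1}+s_{-1}\mathrm{b}'=\id$; and (R4) $(1-\lambda)\mathrm{N}=\mathrm{N}(1-\lambda)=1-\mathrm{T}$.

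Second, I would record the elementary commutation rules feeding these identities, all read off from \eqref{1}, \eqref{2} and the paracyclic axiom \eqref{5}: the definition of $d_i$ in \eqref{2} gives $d_it_n=t_{n-1}d_{i-1}$ for $1\le i\le n$, while \eqref{5} supplies $d_0t_n=d_n$; and from \eqref{3} together with \eqref{2} one derives the extra-degeneracy rules $d_0s_{-1}=\id$ and $d_is_{-1}=s_{-1}d_{i-1}$ for $i\ge1$ (for instance $d_0s_{-1}=\id$ follows by feeding $s_0=t_{n+1}s_{-1}t_n^{-1}$ and $d_1s_0=\id$ through $d_1t_{n+1}=t_nd_0$ and cancelling the invertible $t_n$). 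Relation (R4) is then pure telescoping, $(1-\lambda)\sum_{i=0}^n\lambda^i=1-\lambda^{n+1}$. Relation (R3) is the standard contracting-homotopy computation: expanding $\mathrm{b}'s_{-1}$ and applying $d_0s_{-1}=\id$ and $d_is_{-1}=s_{-1}d_{i-1}$ collapses the sum to $\id-s_{-1}\mathrm{b}'$. Relation (R1) is a one-line index shift: substituting $d_it_n=t_{n-1}d_{i-1}$ and $d_0t_n=d_n$ into $\mathrm{b}(1-\lambda)$ and reindexing yields $(1-\lambda)\mathrm{b}'$, the alternating signs being exactly what the twist $\lambda=(-1)^nt$ absorbs.

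Third, with these in hand the two claimed equalities are short. For $\mathrm{bT}=\mathrm{Tb}$ I would compute $\mathrm{b}(1-\mathrm{T})=\mathrm{b}(1-\lambda)\mathrm{N}=(1-\lambda)\mathrm{b}'\mathrm{N}=(1-\lambda)\mathrm{N}\mathrm{b}=(1-\mathrm{T})\mathrm{b}$, using (R4), then (R1), then (R2), then (R4) again, and cancel the common $\mathrm{b}$. For $\mathrm{bB}+\mathrm{Bb}=1-\mathrm{T}$, I would write $\mathrm{B}=(1-\lambda)s_{-1}\mathrm{N}$ and push $\mathrm{b}$ through on each side: $\mathrm{bB}=(1-\lambda)\mathrm{b}'s_{-1}\mathrm{N}$ by (R1), and $\mathrm{Bb}=(1-\lambda)s_{-1}\mathrm{N}\mathrm{b}=(1-\lambda)s_{-1}\mathrm{b}'\mathrm{N}$ by (R2); adding and invoking (R3) gives $(1-\lambda)(\mathrm{b}'s_{-1}+s_{-1}\mathrm{b}')\mathrm{N}=(1-\lambda)\mathrm{N}=1-\mathrm{T}$ by (R4).

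The main obstacle is (R2), the norm relation $\mathrm{b}'\mathrm{N}=\mathrm{N}\mathrm{b}$. Unlike (R1) it does not follow from a single-step commutation, because $t_{n-1}d_n$ is not among the elementary rules; so one must expand the full cyclic sum $\mathrm{N}=\sum\lambda^i$ and telescope a double sum, carefully tracking the alternating signs across degrees. Everything else is bookkeeping of those signs and of the degree on which each of $\lambda$, $\mathrm{N}$, $\mathrm{b}'$ acts; the only genuinely conceptual points are the passage to the twisted operator $\lambda$ and the verification that $s_{-1}$ behaves as a bona fide extra degeneracy, which is what makes (R3) available.
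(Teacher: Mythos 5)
Your proof is correct, and it is essentially the same argument as the one in the literature the paper points to: Lemma \ref{2.2} is not proved in the paper at all but cited from \cite{GJ}, and the classical Getzler--Jones/Loday proof is exactly your decomposition into (R1)--(R4) with the twisted operator $\lambda=(-1)^nt$, which remains valid for paracyclic modules because (R1), (R2), (R3) use only the relations $d_it_n=t_{n-1}d_{i-1}$, $d_0t_n=d_n$, the simplicial identities and the extra-degeneracy rules, never $t^{n+1}=\id$, while (R4) telescopes to $1-\mathrm{T}$ instead of $0$. The only slip worth flagging is cosmetic: the rule $d_is_{-1}=s_{-1}d_{i-1}$ holds for $1\le i\le n$ but not for the last face (where $d_{n+1}s_{-1}=t_n$ by definition), which is harmless since $\mathrm{b}'$ omits that face.
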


Getzler and Jones first introduced in \cite{GJ} the concepts of the
bi-paracyclic module and the cylindrical module. We recall their
definitions here.

\begin{defi}[\cite{GJ},\cite{AK}]
A \textit{bi-paracyclic module} is a sequence of $k$-modules
$$(\{C_{m,n}\}_{m,n\geq 0},d_i^{m,n},s_i^{m,n},t_{m,n},\bar{d}_j^{m,n},\bar{s}_j^{m,n},\bar{t}_{m,n})$$
such that $(\{C_{m,n}\}_{m,n\geq 0},d_i^{m,n},s_i^{m,n},t_{m,n})$
and $(\{C_{m,n}\}_{m,n\geq
0},\bar{d}_j^{m,n},\bar{s}_j^{m,n},\bar{t}_{m,n})$ are two
paracyclic modules and the operators $d_i^{m,n},s_i^{m,n},t_{m,n}$
commute with the operators
$\bar{d}_j^{m,n},\bar{s}_j^{m,n},\bar{t}_{m,n}$. Moreover, if in
addition, $t_{m,n}^{m+1}\bar{t}_{m,n}^{n+1}=id_{m,n}$ for all
$m,n\geq 0$, then this bi-paracyclic module is called a
\textit{cylindrical module}.
\end{defi}

Another interesting concept named parachain complex was also given
by Getzler and Jones  \cite{GJ}. The mixed complex defined by Kassel
 \cite{K} is a special case of parachain complexes. Here we need
only the mixed complex. The \textit{mixed complex} is, by
definition, a graded $k$-module $(M_n)_{n\in\mathbb{N}}$ endowed
with two graded commutative differentials, one decreasing the degree
and the other increasing the degree. That is,
$(M_{\bullet},\mathrm{b},\mathrm{B})$ with $\mathrm{b}:M_n\ra
M_{n-1}$ and $\mathrm{B}:M_n\ra M_{n+1}$ satisfies
$\mathrm{b}^2=\mathrm{B}^2=\mathrm{bB}+\mathrm{Bb}=0$.  A morphism
of mixed complexes $(M_{\bullet},\mathrm{b},\mathrm{B})$ to
$(M_{\bullet}',\mathrm{b},\mathrm{B})$ is a sequence of morphisms
$f_k:M_n\ra M'_{n+2k}$ for $k\geq 0$ such that $f=\sum_{k\geq 0}
u^kf_k$ commutes with $\mathrm{b}+u\mathrm{B}$. For a cyclic module
$(C_{\bullet},d_i,s_i,t)$ associated with the
 operators $\mathrm{b}$ and $\mathrm{B}$ defined in
\eqref{6}, $(C_{\bullet},\mathrm{b},\mathrm{B})$ is a mixed complex.

It is usually simpler to consider the complex with one differential
than to consider the mixed complex with two differentials. Actually,
a mixed complex can be converted into a complex. Let $V_{\bullet}$
be a non-negative graded $k$-module. Denote by $V_{\bullet}[[u]]$
the graded $k$-modules of formal power series in a variable $u$ with
coefficients in $V_{\bullet}$. Set the degree of $u$ be $-2$. If
$V_{\bullet}$ is endowed with a degree $-1$ endomorphism
$\mathrm{b}$ and a degree $1$ endomorphism $\mathrm{B}$, then
 $(V_{\bullet},\mathrm{b},\mathrm{B})$ is a mixed complex if and only if
 $(V_{\bullet}[[u]],\mathrm{b}+u\mathrm{B})$ is a complex with the differential
 $\mathrm{b}+u\mathrm{B}$. Here set
$V_{n}[[u]]=\sum_{i\geq 0}V_{n+2i}u^i$.\\

\noindent \textbf{2.2} \quad Now we return to our  strong smash
product algebra.

 The cyclic module $C_{\bullet}(A\#_{_R}B)$ of an algebra $A\#_{_R}B$ is defined
 as usual (see \cite{L} etc). That is, $C_n(A\#_{_R}B)=(A\#_{_R}B)^{\ot
 (n+1)}$ for all $n\in\mathbb{N}$ with \begin{align*}
d_i(x_0,\ldots,x_n)&=(x_0,\ldots,x_ix_{i+1},\ldots,x_n),\quad 0\leq i<n,\\
d_n(x_0,\ldots,x_n)&=(x_nx_0,\ldots,x_{n-1}),\\
t(x_0,\ldots,x_n)&=(x_n,x_0,\ldots,x_{n-1}),\\
s_j(x_0,\ldots,x_n)&=(x_0,\ldots,x_j,1,x_{j+1},\ldots,x_n), \quad
0\leq j\leq n,
 \end{align*}
where $x_0,\ldots,x_n\in A\#_{_R}B$.

For $A$ and $B$ the subalgebras of $A\#_{_{R}}B$, we introduce a
cylindrical module denoted by $A\natural B$ which generalizes the
cylindrical module constructed in the paper \cite{GJ} by Getzler and
Jones  where $B$ is a group algebra and $A$ is a $B$-module algebra,
also generalizes the cylindrical module constructed in the paper
\cite{AK} by  Akbarpour  and Khalkhali  where $B$ is a Hopf algebra
with an invertible antipode and $A$ is a $B$-module algebra.

For $p, q\in \mathbb{N}$, set $A\natural B(p,q)=B^{\ot (p+1)}\ot
A^{\ot (q+1)}$ endowed with the following operators which are mainly
defined on $B$'s side:
\begin{align}\label{7}\begin{split}
t_{p,q}(b_0,\ldots,b_p\,|\,a_0,\ldots,a_q)
&=\mathrm{f}^{p+q+1,1}(b_0,\ldots,b_{p-1},\Theta_q(b_p,a_0,\ldots, a_q)),\\
d_i^{p,q}(b_0,\ldots,b_p\,|\,a_0,\ldots,a_q)
&=(b_0,\ldots,b_ib_{i+1},\ldots,b_{p}\,|\,a_0,\ldots,a_q),\quad 0\leq i<p,\\
s_i^{p,q}(b_0,\ldots,b_p\,|\,a_0,\ldots,a_q)
&=(b_0,\ldots,b_i,1,b_{i+1},\ldots,b_{p}\,|\,a_0,\ldots,a_q),\quad
0\leq i\leq p;\end{split}
 \end{align}
 and the following operators which are mainly defined on $A$'s
side:
\begin{align}\label{8}\begin{split}
\bar{t}_{p,q}(b_0,\ldots,b_p\,|\,a_0,\ldots,a_q)
&=(\Gamma_p(a_q,b_0,\ldots,b_p),a_0,\ldots,a_{q-1}),\\
\bar{d}_j^{p,q}(b_0,\ldots,b_p\,|\,a_0,\ldots,a_q)
&=(b_0,\ldots,b_{p}\,|\,a_0,\ldots,a_ja_{j+1},\ldots,a_q),\quad 0\leq j<q,\\
\bar{s}_j^{p,q}(b_0,\ldots,b_p\,|\,a_0,\ldots,a_q)
&=(b_0,\ldots,b_{p}\,|\,a_0,\ldots,a_j,1,a_{j+1},\ldots,a_q),\quad
0\leq j\leq q,\end{split}
 \end{align}
where $\mathrm{f}^{\cdot,\cdot}$ is the flip map defined by
$$\mathrm{f}^{m,n}(c_1,\ldots,c_m,c_1',\ldots,c_n'):=(c_1',\ldots,c_n',c_1,\ldots,c_m),$$
 $\Theta_q$ is a composition of $R$'s defined by
$$\Theta_q:=R_{q+1,q+2}\cdots R_{23}R_{12}:B\ot A^{\ot (q+1)}\ra A^{\ot (q+1)}\ot
B,$$ and $\Gamma_p$ is a composition of $R^{-1}$'s defined by
$$\Gamma_p:=R^{-1}_{p+1,p+2}\cdots R^{-1}_{23}R^{-1}_{12}:A\ot B^{\ot(p+1)}\ra B^{\ot(p+1)}\ot A,$$
for $a_i\in A$ and $b_i\in B$. Define the last face maps by
$d_p^{p,q}=d_0^{p,q}t_{p,q}$ and
$\bar{d}_q^{p,q}=\bar{d}_0^{p,q}\bar{t}_{p,q}$.

We can simply write $t_{p,q}=\mathrm{f}^{p+q+1,1}\circ (id^{\ot
p}\ot \Theta_q)$ and $\bar{t}_{p,q}=(\Gamma_p\ot id^{\ot
q})\circ\mathrm{f}^{p+q+1,1}$. Graphically, present the flip map
between $A\ot B$ and $B\ot A$ by
$\xygraph{!{0;/r1.0pc/:}[u(0.5)]!{\xunderv@(0)}[d(0.5)]A[r]B[uu]A[l]B}$,
its inverse is
$\xygraph{!{0;/r1.0pc/:}[u(0.5)]!{\xunderv@(0)}[d(0.5)]B[r]A[uu]B[l]A}$.
The identity is denoted by~
$\xygraph{!{0;/r1.5pc/:}[u(0.8)]!{\xcapv@(0)}}$. Then $t_{p,q}$ and
$\bar{t}_{p,q}$ can be presented by
$$t_{p,q}=\raisebox{-2.5pc}{
\includegraphics[scale=0.5]{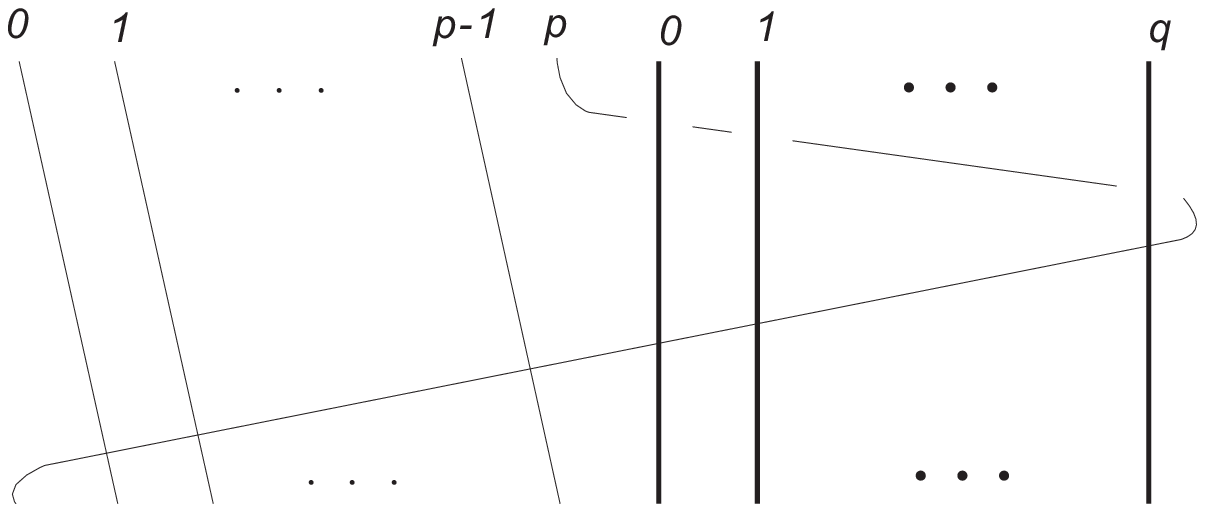}},
\quad \bar{t}_{p,q}= \raisebox{-2.5pc}{
\includegraphics[scale=0.5]{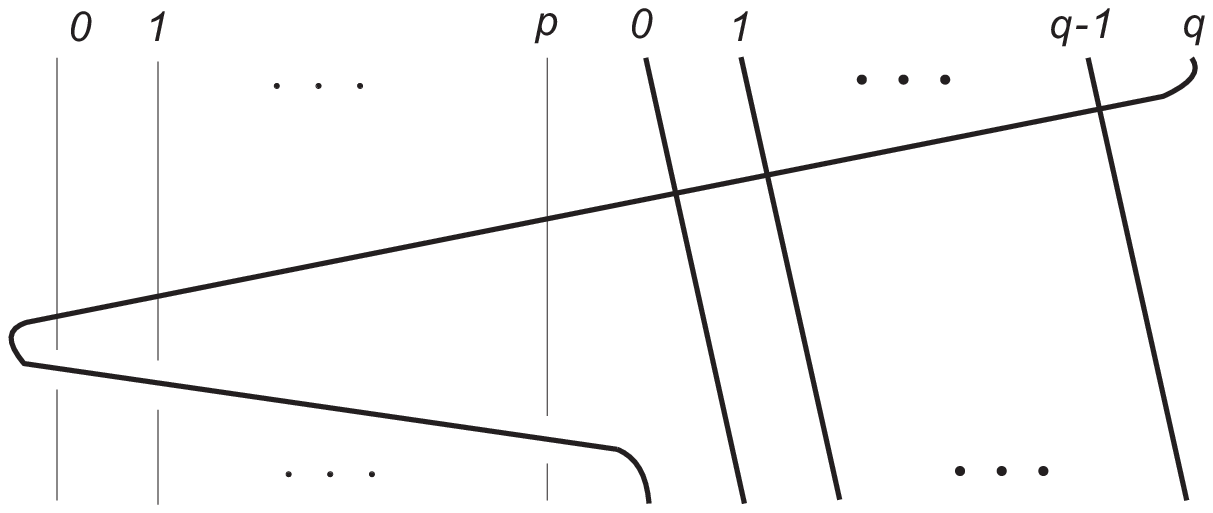}}.$$
The elements in $A$ are drawn with thick lines and the elements in
$B$ are drawn with thin lines in order to show
differences.\vspace{1em}

Since $RR^{-1}=R^{-1}R=id$, we have \begin{equation}\tag{I}
\raisebox{-1.5pc}{\includegraphics[scale=0.5]{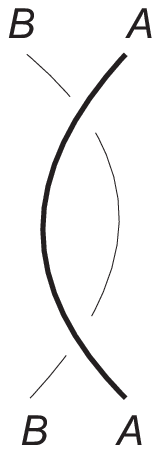}}\cong
\raisebox{-1.5pc}{\includegraphics[scale=0.5]{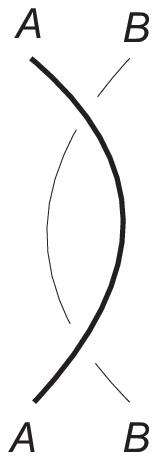}}\cong
\raisebox{-1.5pc}{\includegraphics[scale=0.5]{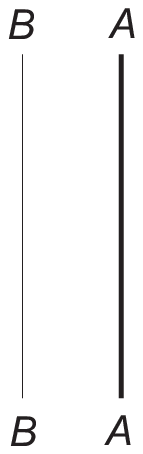}}\cong
\raisebox{-1.5pc}{\includegraphics[scale=0.5]{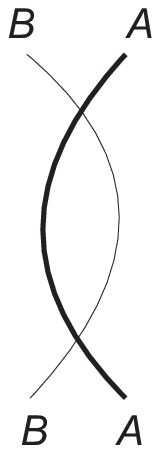}}.\end{equation}

 Although  $R$ does not satisfy the braid relations, the flip maps
 always satisfy them and are involutions. When the three crosses in
 one side of the braid relations consist of two flip maps and one $R$
 or $R^{-1}$, we still have the ``braid'' relations.

\begin{lemma}
$\mathrm{f}_{12}\mathrm{f}_{23}R_{12}
=R_{23}\mathrm{f}_{12}\mathrm{f}_{23}$,
$\mathrm{f}_{12}R_{23}\mathrm{f}_{12}
=\mathrm{f}_{23}R_{12}\mathrm{f}_{23}$,
$R_{12}\mathrm{f}_{23}\mathrm{f}_{12}
=\mathrm{f}_{23}\mathrm{f}_{12}R_{23}$, where $\mathrm{f}$ denotes
$\mathrm{f}^{1,1}$, i.e., the flip map of two elements. The
graphical notations are
\begin{equation}\tag{II}\raisebox{-2.5pc}{\includegraphics[scale=0.5]{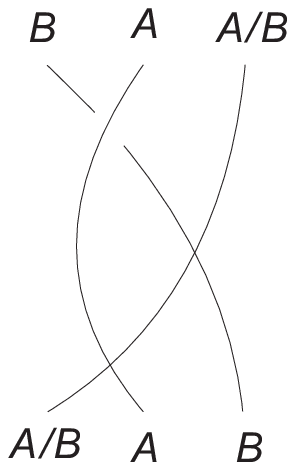}}
{\cong}
\raisebox{-2.5pc}{\includegraphics[scale=0.5]{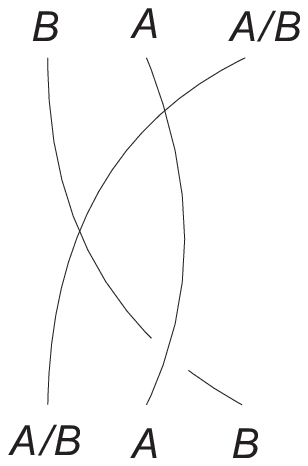}},\qquad
\raisebox{-2.5pc}{\includegraphics[scale=0.5]{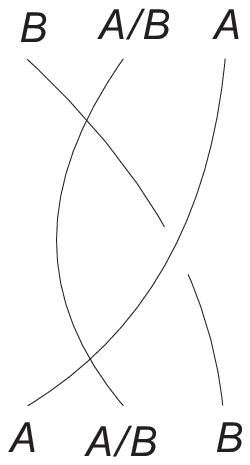}} {\cong}
\raisebox{-2.5pc}{\includegraphics[scale=0.5]{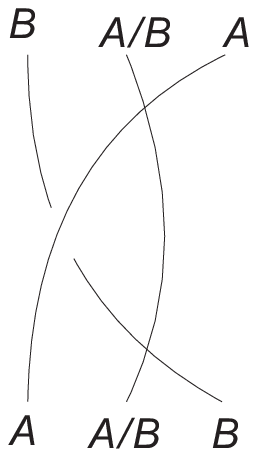}}, \qquad
\raisebox{-2.5pc}{\includegraphics[scale=0.5]{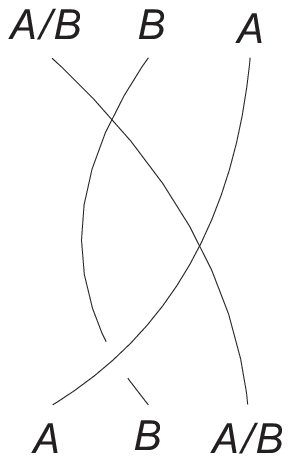}} {\cong}
\raisebox{-2.5pc}{\includegraphics[scale=0.5]{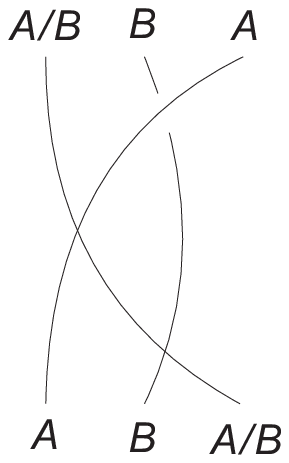}}.
\end{equation} For $R^{-1}$, we have the same relations.
\end{lemma}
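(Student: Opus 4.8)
The plan is to read each of the three identities as an instance of the naturality of the flip $\mathrm{f}$, which is simply the symmetry of the symmetric monoidal category of $k$-vector spaces. On a triple tensor product the composite $\mathrm{f}_{12}\mathrm{f}_{23}$ is the cyclic shift $x_1\ot x_2\ot x_3\mapsto x_3\ot x_1\ot x_2$, and each asserted equality merely records that carrying the single occurrence of $R$ past two flip crossings relocates it from one pair of adjacent tensor slots to the neighbouring pair without altering what it does. Since $\mathrm{f}$ is a natural isomorphism, $\mathrm{f}\circ(g\ot\id)=(\id\ot g)\circ\mathrm{f}$ for every linear map $g$, and this, applied to $g=R$, is exactly the mechanism behind all three relations.

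Concretely, I would verify the first identity $\mathrm{f}_{12}\mathrm{f}_{23}R_{12}=R_{23}\mathrm{f}_{12}\mathrm{f}_{23}$ by evaluating both sides on a decomposable tensor $b\ot a\ot z$ with $b\in B$, $a\in A$, and $z$ of arbitrary type, so that the unique $R$ is defined on the $B\ot A$ block. Writing $R(b\ot a)=\sum_i a_i\ot b_i$ with $a_i\in A$, $b_i\in B$, the left side sends $b\ot a\ot z\mapsto\sum_i a_i\ot b_i\ot z\mapsto\sum_i a_i\ot z\ot b_i\mapsto\sum_i z\ot a_i\ot b_i$, while the right side sends $b\ot a\ot z\mapsto b\ot z\ot a\mapsto z\ot b\ot a\mapsto\sum_i z\ot a_i\ot b_i$; the two coincide. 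The second and third identities are checked in the same fashion, with the factors typed so that the single $R$ again acts on a $B\ot A$ block (for the second, $x_1\in B$ and $x_3\in A$ with $x_2$ arbitrary; for the third, $x_2\in B$ and $x_3\in A$ with $x_1$ arbitrary); in each case both composites carry the typed input to the same element of $A\ot X\ot B$, with the untouched factor sitting in the appropriate slot. For the $R^{-1}$ statements I would run the identical computation with $R$ replaced by the linear map $R^{-1}\colon A\ot B\ra B\ot A$, or, more economically, deduce them from the $R$-identities by pre- and post-composing with the relevant flips and cancelling $RR^{-1}=R^{-1}R=\id$ as recorded in (I).

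The hard part will be essentially nonexistent at the level of mathematical depth: the entire content is positional bookkeeping of which slots carry elements of $A$ and which carry elements of $B$, so that $R$ and $R^{-1}$ are defined exactly where they appear. This is precisely what the diagrams in (II) encode, and the diagrammatic proof is just an isotopy reading: because $\mathrm{f}$ is an honest involutive transposition of two strands while $R$ (resp.\ $R^{-1}$) is a single crossing, each identity is obtained by sliding that one crossing across the two flip crossings from one pair of strands to the adjacent pair. The only care required is to respect the stated configurations, since the equalities hold exactly when two of the three crossings are flips and the remaining one is $R$ or $R^{-1}$; $R$ alone does not satisfy the braid relation, so one must not attempt to commute two genuine $R$-crossings in this way.
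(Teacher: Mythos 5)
Your proposal is correct and matches the paper's (implicit) argument: the paper states this lemma without any proof, treating it as an immediate consequence of the naturality of the flip, which is exactly the identity $\mathrm{f}\circ(g\ot \id)=(\id\ot g)\circ\mathrm{f}$ that you invoke and then verify slot-by-slot on decomposable tensors with the correct typing of the factors. Your handling of the $R^{-1}$ case — either rerunning the computation or deducing it from the $R$-relations by inversion/conjugation, using that the flips are involutions — is likewise sound and fills in the routine check the paper leaves to the reader.
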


\begin{prop}
$(A\natural B,d_i,s_i,t,\bar{d}_j,\bar{s}_j,\bar{t})$ is a
cylindrical module.
\end{prop}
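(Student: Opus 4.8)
The plan is to verify the defining axioms of a cylindrical module directly, namely: (1) that $(A\natural B, d_i, s_i, t)$ is a paracyclic module, (2) that $(A\natural B, \bar d_j, \bar s_j, \bar t)$ is a paracyclic module, (3) that the two families of operators commute with each other, and (4) the cylindrical compatibility $t_{p,q}^{p+1}\bar t_{p,q}^{q+1}=\id_{p,q}$. Since the face and degeneracy maps on the $B$-side act only by multiplying or inserting units among the $b_i$'s, and similarly on the $A$-side among the $a_i$'s, the purely simplicial relations in \eqref{3} among the $d_i$ (resp.\ $\bar d_j$) and $s_i$ (resp.\ $\bar s_j$) are immediate from associativity and unitality of $B$ (resp.\ $A$), exactly as for the ordinary Hochschild complex. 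The substance lies entirely in the interaction of the cyclic operators $t_{p,q}$ and $\bar t_{p,q}$ with everything else, since these are the only operators that mix the two sides via $R$ and $R^{-1}$.

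First I would establish the paracyclicity of the $B$-side, i.e.\ the relations \eqref{5} for $t_{p,q}$: that $d_0 t_{p,q}=d_p$ (which holds by the definition $d_p=d_0 t_{p,q}$) and $s_0 t_{p,q}=t_{p+1,q}^2 s_p$, together with the commutation of $t_{p,q}$ with the inner face and degeneracy maps $d_i,s_i$ for $0\le i<p$. The key observation is that $t_{p,q}$ transports the last $B$-entry $b_p$ past all the $A$-entries via $\Theta_q=R_{q+1,q+2}\cdots R_{12}$ and deposits the resulting $B$-factor in front. To check how $t_{p,q}$ interacts with the inner $B$-multiplications, I would use the quasitriangularity of $R$ (Definition~1.1), which is precisely the statement that $\Theta$ is compatible with the product on $B$; diagrammatically this is relation (II) of the preceding lemma, the ``braid'' relations between flips and $R$. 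The analogous argument on the $A$-side uses $R^{-1}$ and its quasitriangularity, which Proposition~\ref{1.2} guarantees. The mutual commutation in step (3) follows because $d_i,s_i$ act on $B$-entries indexed $0,\dots,p-1$ while $\bar d_j,\bar s_j$ act on $A$-entries, and the only genuine check is that $t_{p,q}$ commutes with $\bar t_{p,q}$; this I would verify diagrammatically by sliding the two cyclic rotations past each other, which amounts to applying relation (II) repeatedly so that the single $R$ introduced by $t$ and the single $R^{-1}$ introduced by $\bar t$ commute through the lattice of flip maps.

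The main obstacle, and the heart of the proof, is step (4): the cylindrical relation $t_{p,q}^{p+1}\bar t_{p,q}^{q+1}=\id$. Applying $t_{p,q}$ a total of $p+1$ times cycles each $b_i$ all the way around, and each passage of a $B$-entry past the block $(a_0,\dots,a_q)$ contributes one copy of $\Theta_q$, i.e.\ a cascade of $R$'s; symmetrically, $\bar t_{p,q}^{q+1}$ contributes a cascade of $R^{-1}$'s via $\Gamma_p$. The claim is that after composing all these, every $R$ is matched by a corresponding $R^{-1}$ and cancels by relation (I) ($RR^{-1}=R^{-1}R=\id$), leaving only flip maps whose total effect is the identity permutation. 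I would prove this graphically: draw the full braid diagram for $t_{p,q}^{p+1}\bar t_{p,q}^{q+1}$, use the ``braid'' relations (II) to bring each $R$ crossing adjacent to its inverse $R^{-1}$ crossing, cancel the pairs using (I), and then observe that the remaining diagram consists purely of flip maps that untangle to the identity because the flips are involutions satisfying the genuine braid relations. This bookkeeping — tracking which $R$ cancels which $R^{-1}$ as the strands wind around — is the delicate part, and it is exactly where the diagrammatic calculus, rather than component formulas, makes the argument tractable.
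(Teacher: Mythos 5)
Your overall strategy --- verify the two paracyclic structures, the mutual commutation, and the cylindrical identity diagrammatically using the cancellation (I) and the mixed braid relations (II) --- is the same as the paper's, and your steps (1)--(3) are essentially right, with one imprecision: the commutation of $t_{p,q}$ with the inner $\bar d_j$ is \emph{not} a disjoint-factors triviality and is not subsumed by $t\bar t=\bar t t$. Since $t_{p,q}$ passes $b_p$ through the product $a_ja_{j+1}$ via $\Theta_q$, this commutation is a genuine check requiring quasitriangularity of $R$; the paper devotes its case (ii) to exactly this.

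The genuine gap is in your step (4). You propose to ``use the braid relations (II) to bring each $R$ crossing adjacent to its inverse $R^{-1}$ crossing'' and then cancel by (I). But (II) is only valid when two of the three crossings involved are flip maps; $R$ satisfies no braid relation with itself, nor with $R^{-1}$ --- the paper states this restriction explicitly before stating (II). In the diagram of $t_{p,q}^{p+1}\bar t_{p,q}^{\,q+1}$, the matched $R$ and $R^{-1}$ crossings of a given pair of strands $(b_i,a_j)$ are separated by many other $R^{\pm1}$ crossings that share a strand with them, so it is not clear --- and is precisely what must be proved --- that the matched pair can be brought together using only (II), flip moves, and exchanges of distant crossings. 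This is where the paper does its real work: it argues by induction on $(p,q)$, first showing (using only (II) and the flips' genuine braid relations) that the strands $b_0,\dots,b_{p-1}$, and likewise $a_0,\dots,a_{q-1}$, can be redrawn as parallel ``bands'' with no crossings among themselves, so that the whole diagram collapses to the four-strand $(1,1)$ picture, where an explicit chain of (I)-cancellations gives $(t_{1,1}\bar t_{1,1})^2=\id$. Your sketch acknowledges that the bookkeeping is delicate but supplies no device to carry it out; the band-plus-induction reduction is that device, and without it (or an equivalent organization of the cancellations) the proof of the cylindrical identity is incomplete.
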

\begin{proof}
We  check  the commutativity of the barred operators and unbarred
operators first. We would like to use the graphical proof.

$$\xymatrix{\textrm{(i)}:~~{t}_{p,q}\bar{t}_{p,q}=\raisebox{-5pc}
{\includegraphics[scale=0.5]{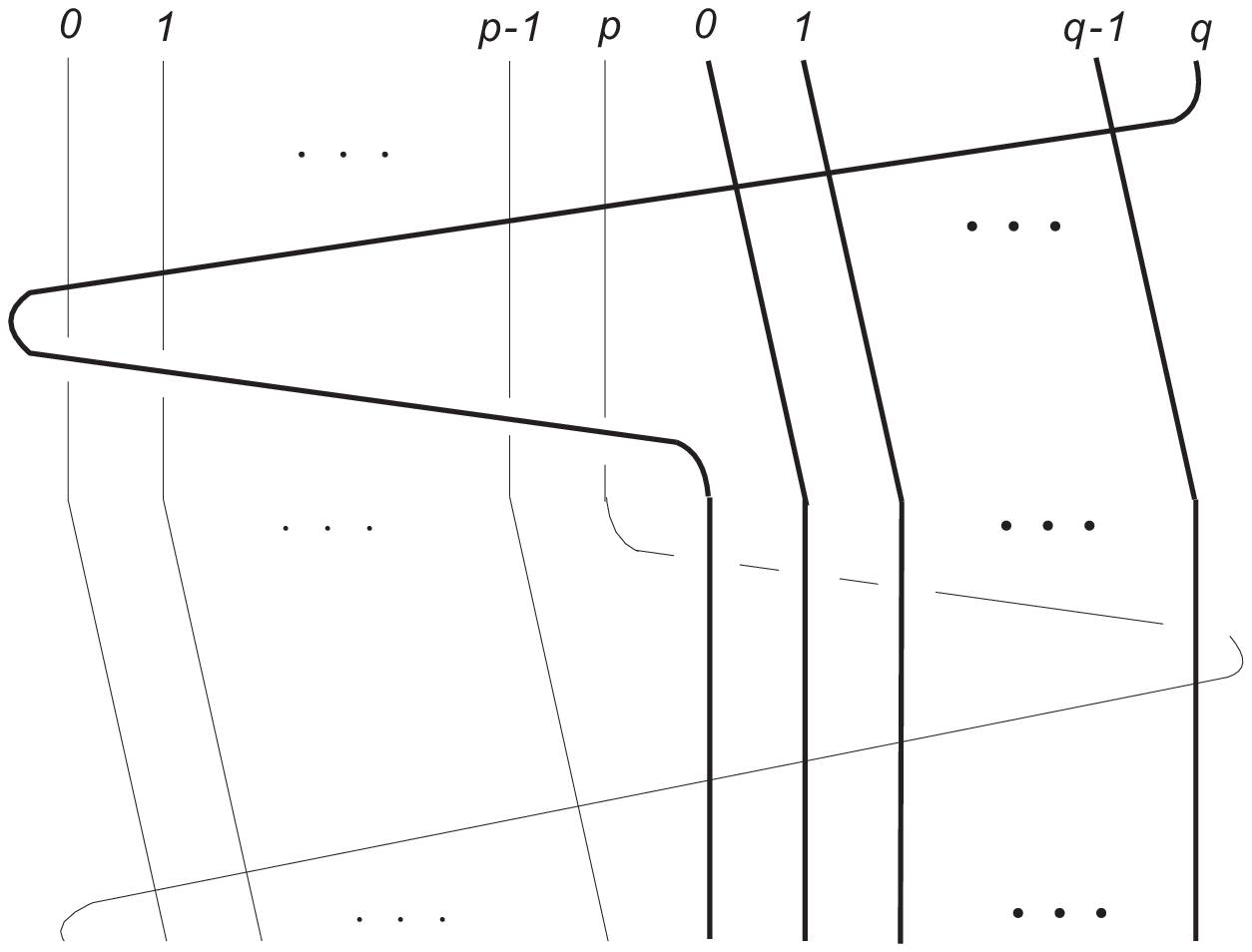}}\ar[r]^-{ \txt{act
(I)}}&}$$
$$\xymatrix{\includegraphics[scale=0.5]{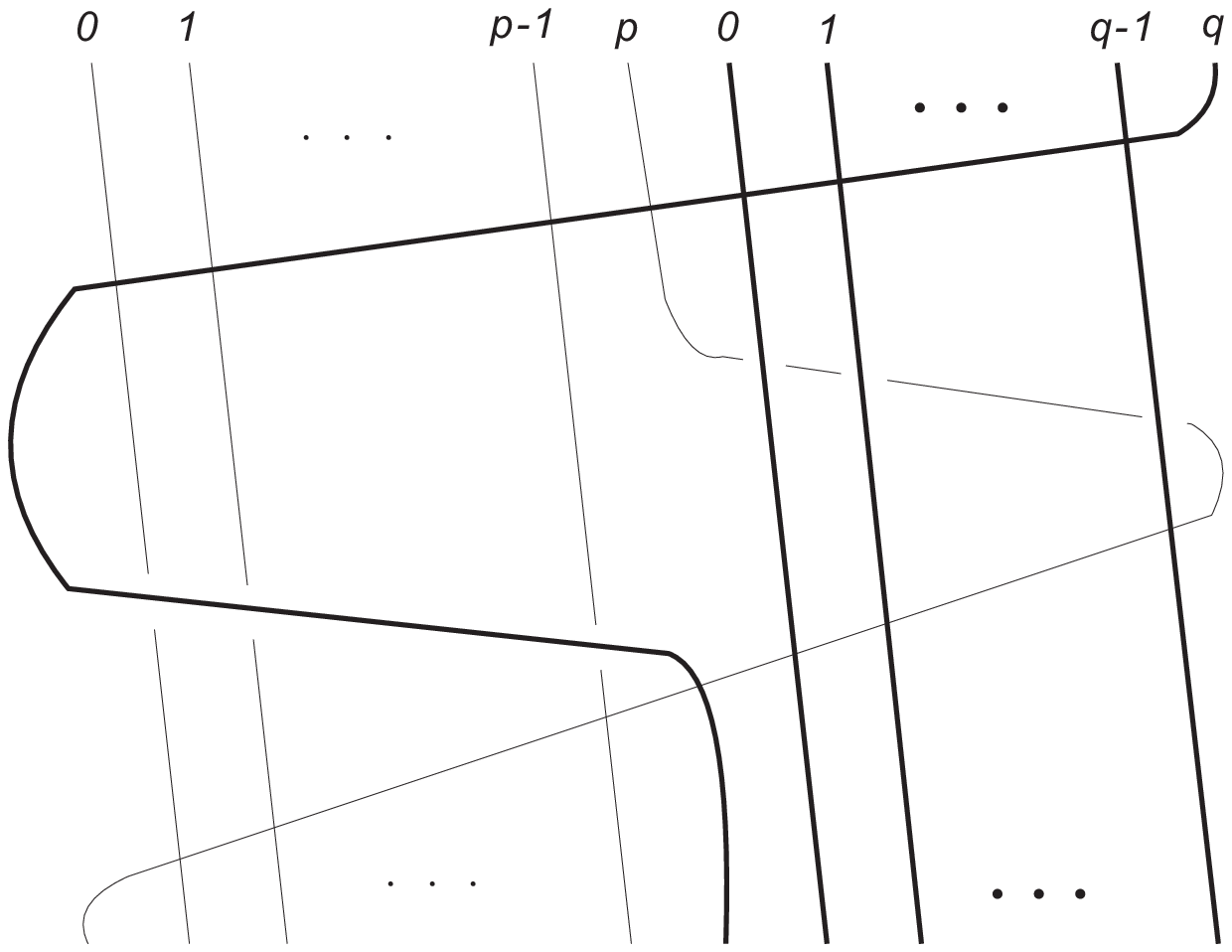}&&
\ar[rr]^-{\txt{act (II) on\\ its crosses in\\the upper right
corner}}_-{\txt{ and the\\ lower left corner}}&&}$$
$$\xymatrix{\includegraphics[scale=0.5]{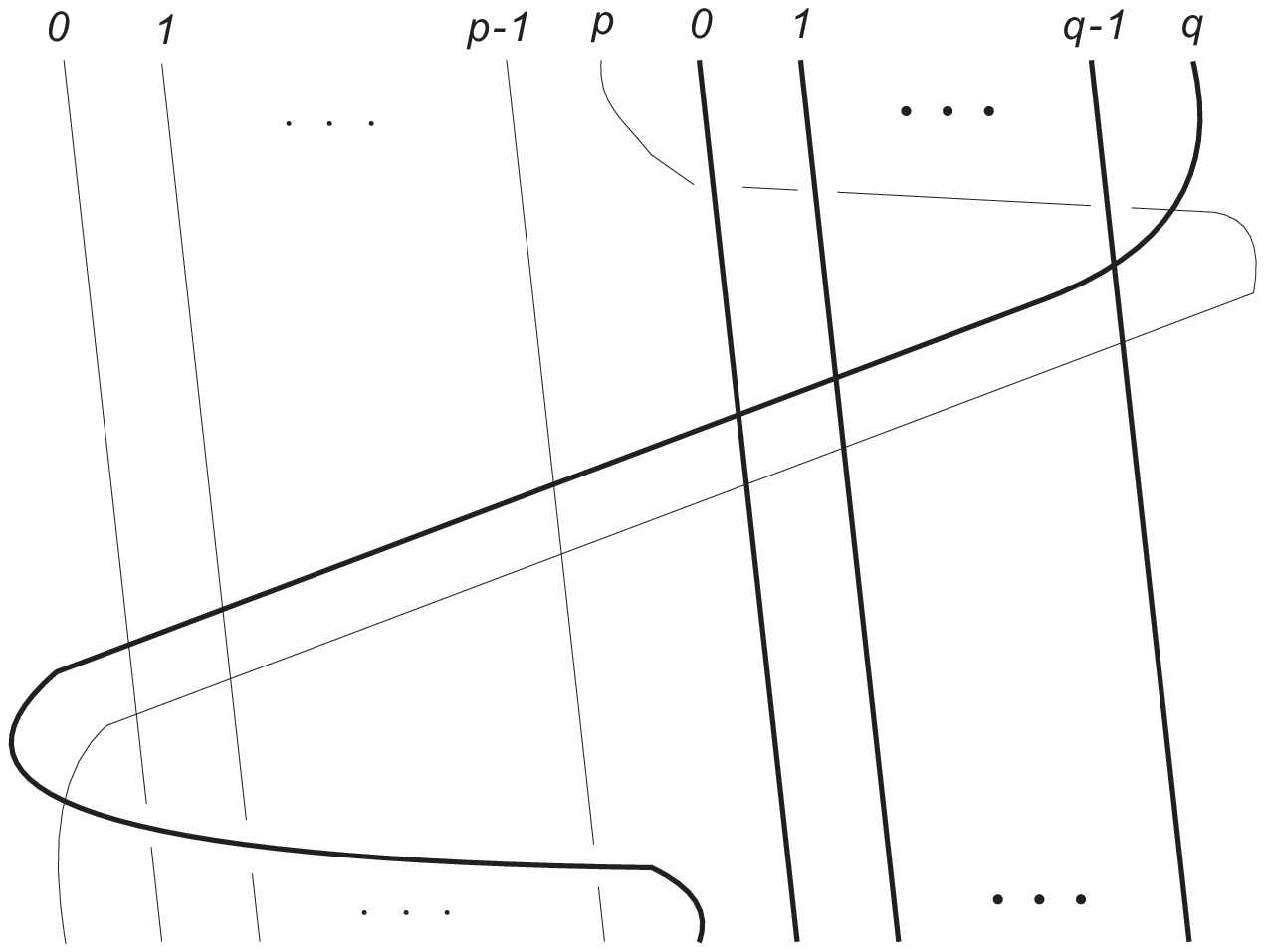}&\ar[rr]^-{\txt{the
flip maps\\ are involutions\\ and obey\\ braid relations}}&&}$$

$$\xymatrix{\includegraphics[scale=0.5]{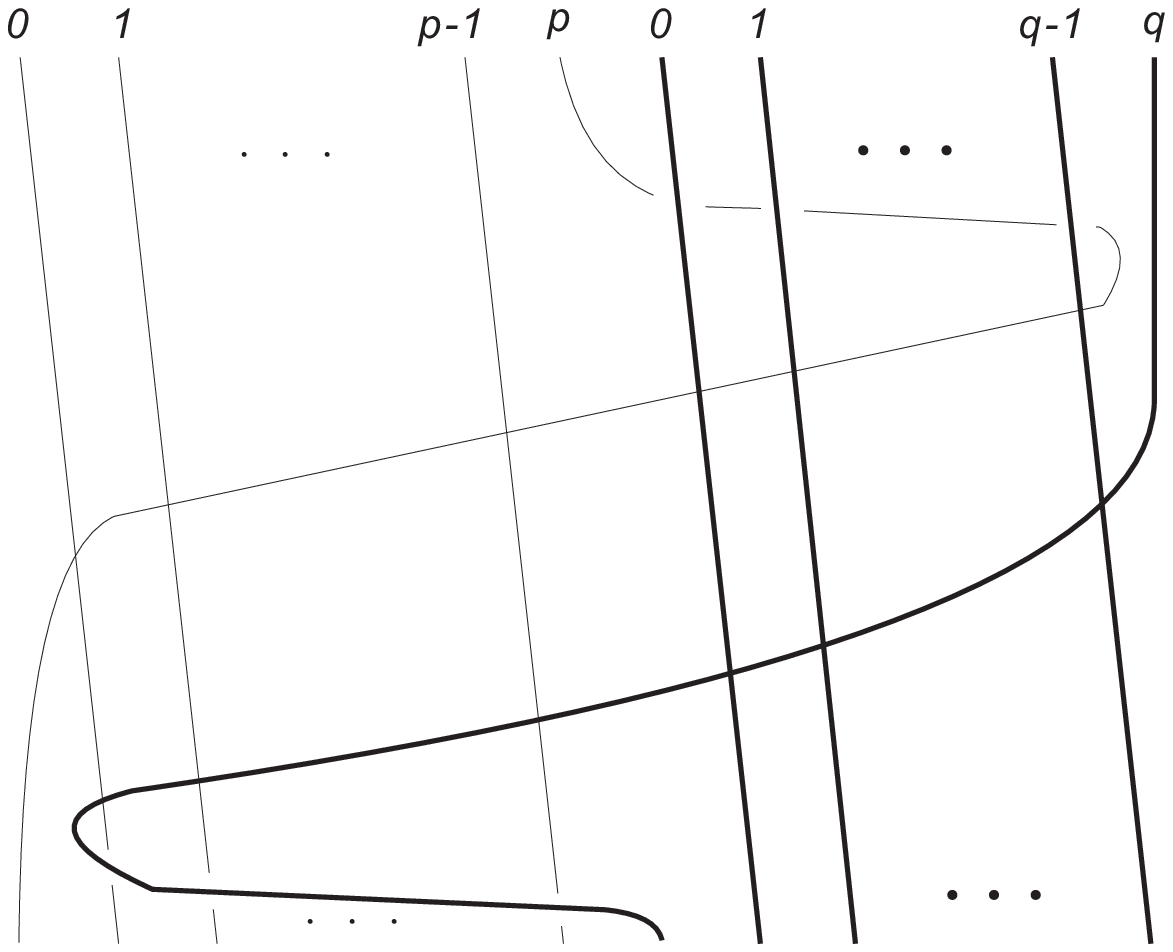}\ar[r]^-{\txt{(I)}}&
{\includegraphics[scale=0.5]{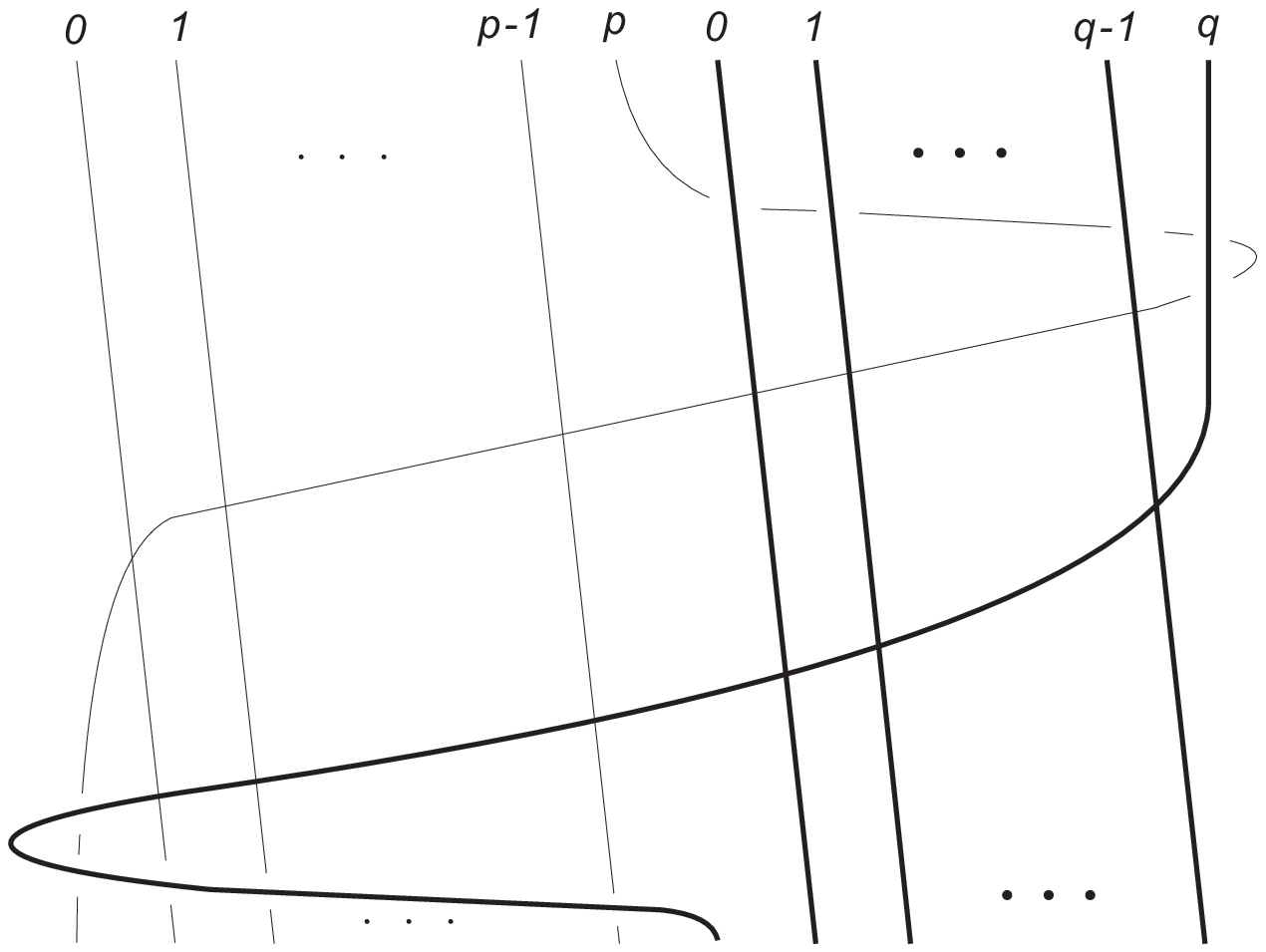}}}$$

$$ \xymatrix{&\ar[r]^-{\txt{(I)}}& \raisebox{-5pc}
{\includegraphics[scale=0.5]{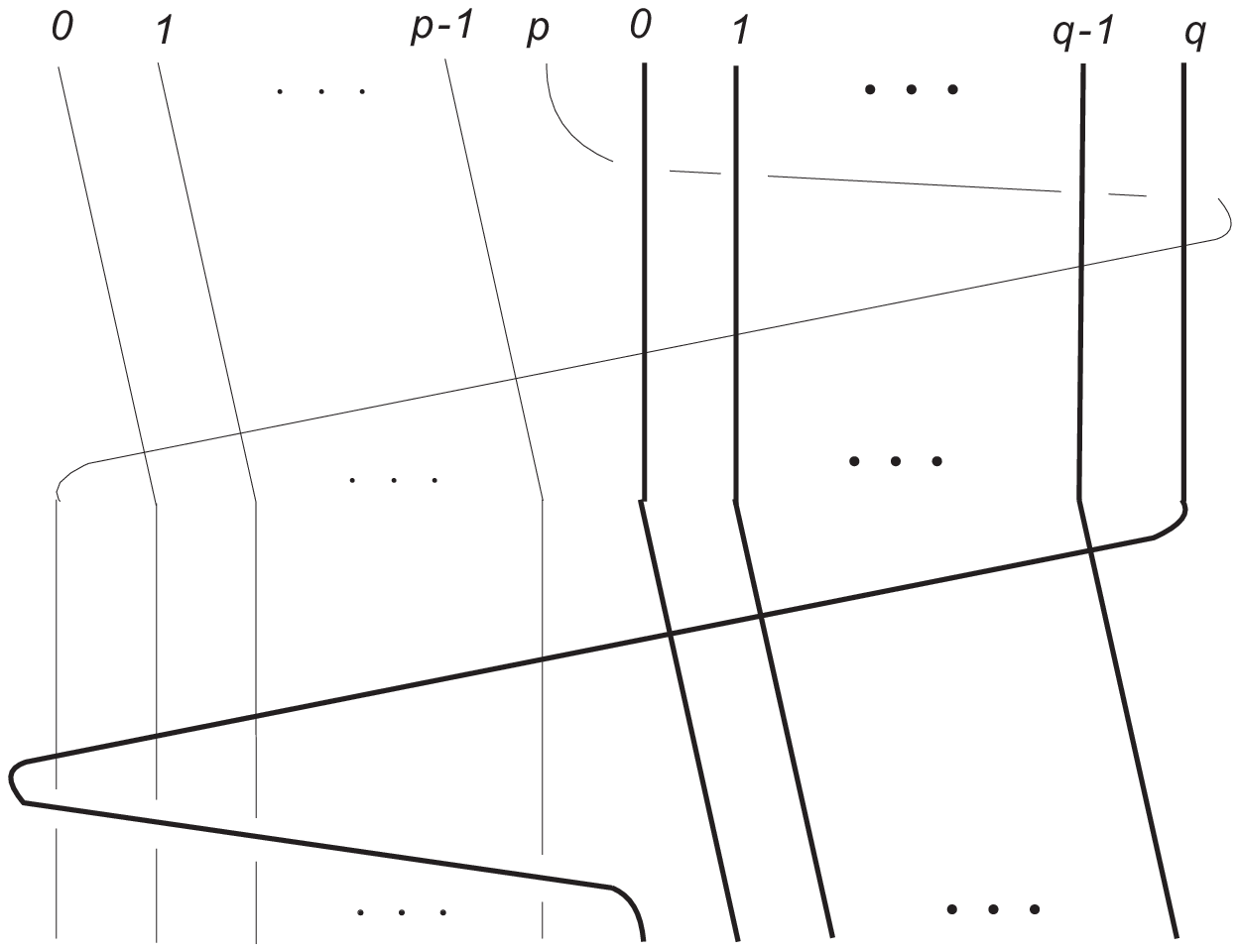}}=\bar{t}_{p,q}{t}_{p,q}}.$$

(ii) For $0\leq j<q$,
$$\xymatrix{~~\bar{d}_j^{p,q}t_{p,q}=\raisebox{-2.5pc}
{\includegraphics[scale=0.5]{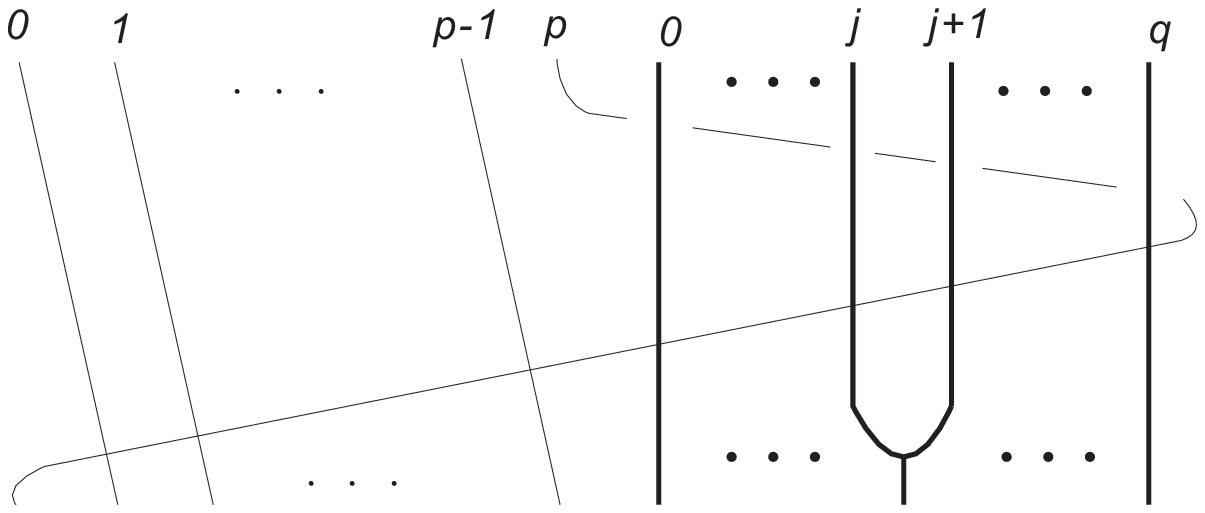}}&\ar[rr]^-{ \txt{flip map is\\
quasi-\\triangular}}&&}$$
$$\xymatrix{\includegraphics[scale=0.5]{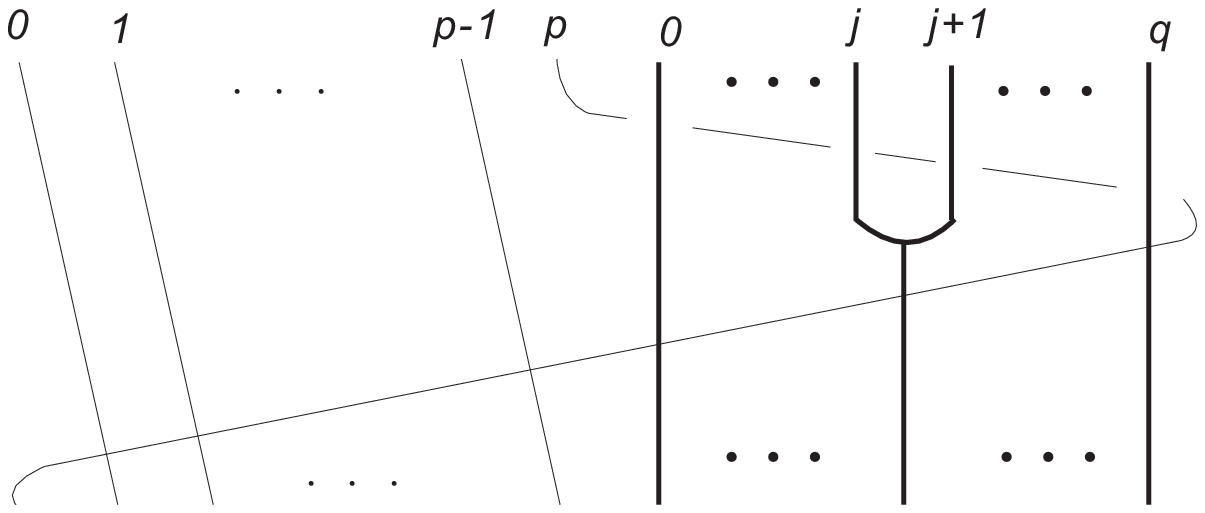}&
\ar[rr]^-{ \txt{$R$ is quasi-\\triangular}}&&}$$
$$ \xymatrix{ \raisebox{-2.5pc}{\includegraphics[scale=0.5]
{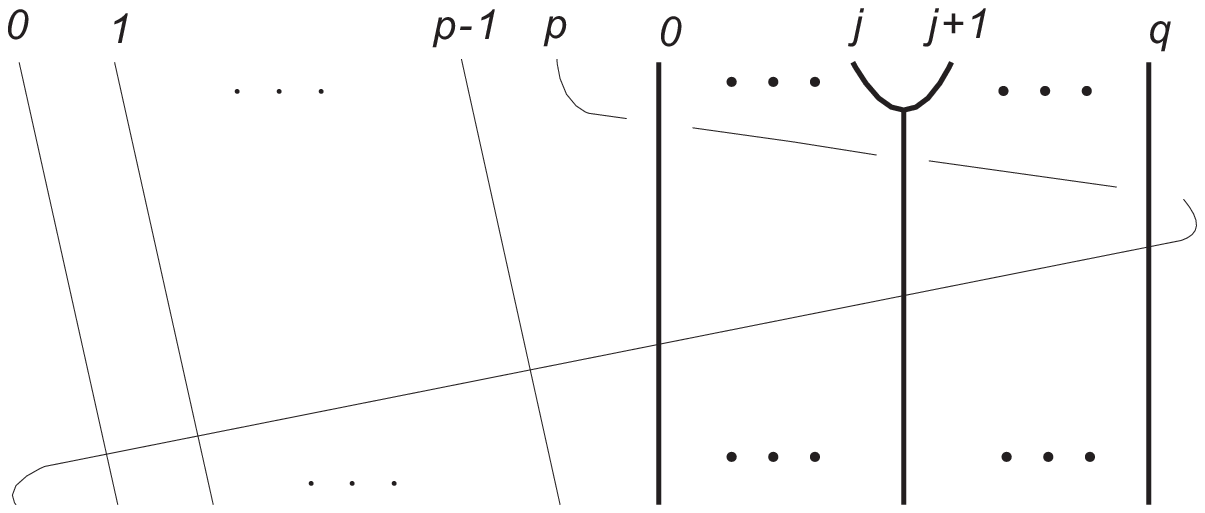}}={t}_{p,q}\bar{d}_j^{p,q}}$$

Similar proof holds for
${d}_i^{p,q}\bar{t}_{p,q}=\bar{t}_{p,q}{d}_i^{p,q}$,
 for $0\leq i<p$.

The flip map and  $R^{\pm 1}$ are quasitriangular and normal,
$d_{p}^{p,q}=d_{0}^{p,q}t_{p,q}$ and
$\bar{d}_{q}^{p,q}=\bar{d}_{0}^{p,q}\bar{t}_{p,q}$, so
 the other commutative equalities can be proved easily.

For the cylindrical condition, we use inductions on $p$ and $q$. For
$p=q=1$, using the fourth picture in the process of turning
$t_{p,q}\bar{t}_{p,q}$ to $\bar{t}_{p,q}t_{p,q}$, we get
$$\xymatrix{\ar@{}[r]|-{\displaystyle(t_{1,1}\bar{t}_{1,1})^2=\qquad}&
\includegraphics[scale=0.5,height=5cm]{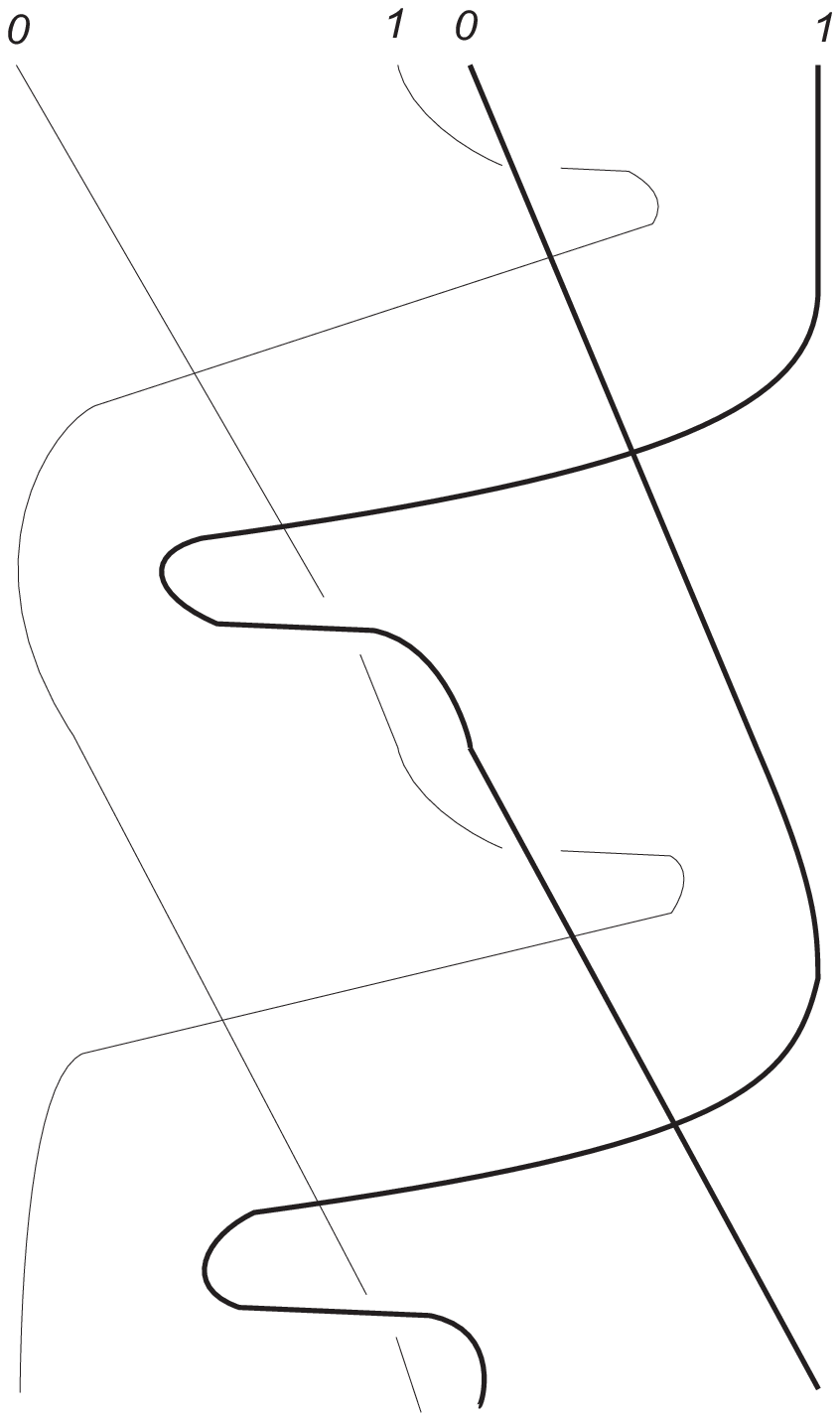}
\ar[r]^-{\txt{(I)}}&\includegraphics[scale=0.5,height=5cm]{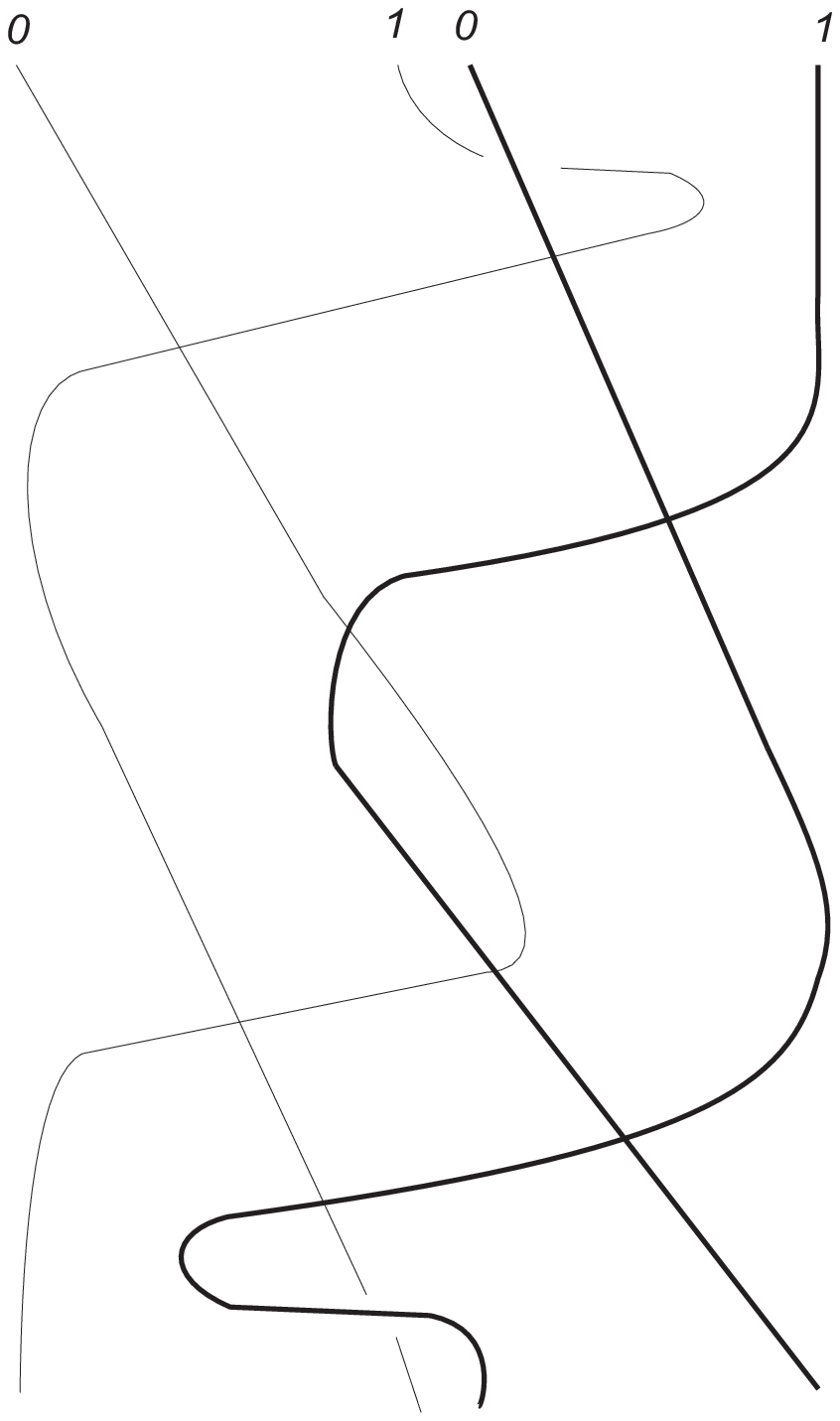}\ar[r]^-{\txt{(I)}}&}$$
$$\xymatrix{
\includegraphics[scale=0.5,height=5cm]{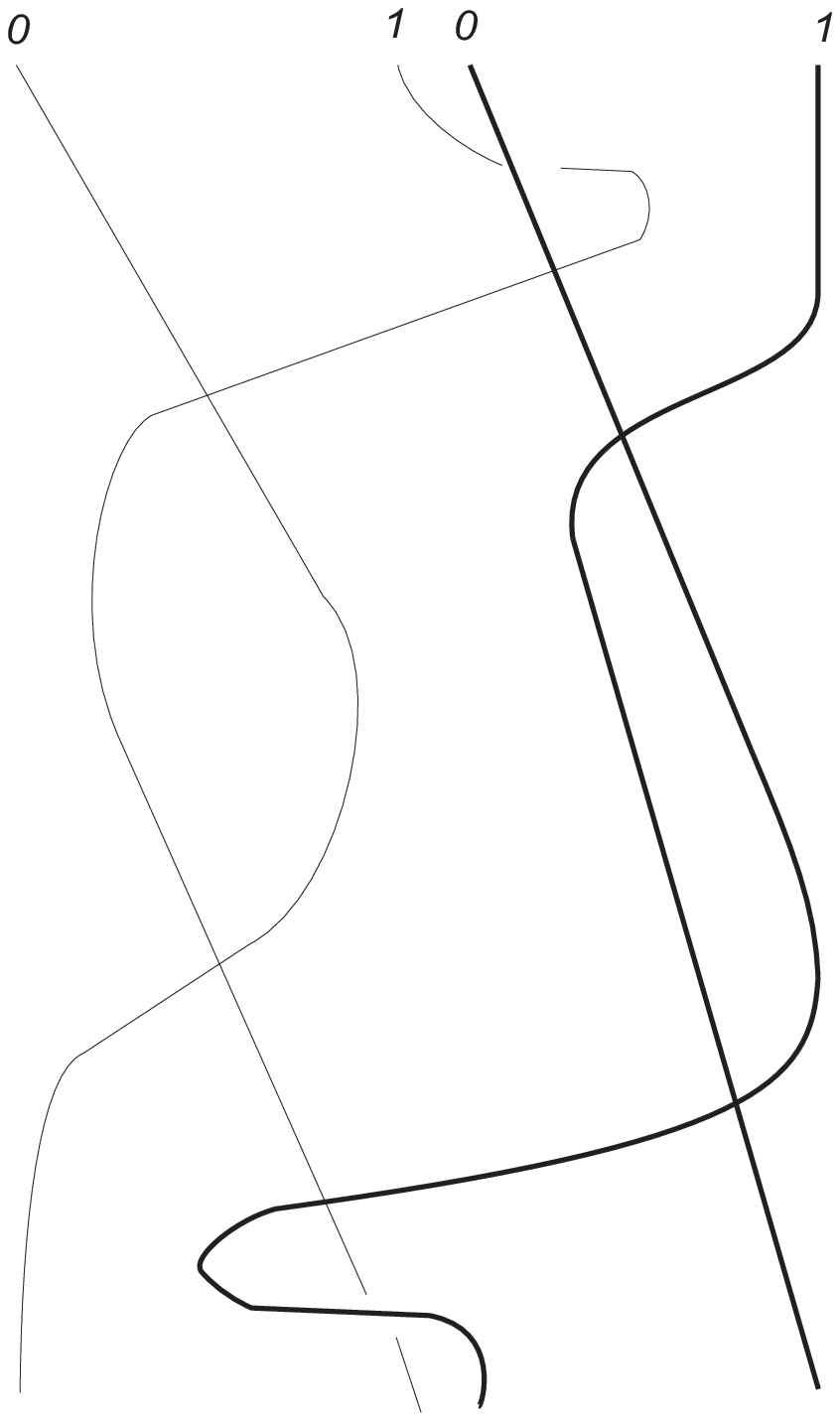}
\ar[r]^-{\txt{(I)}}&\includegraphics[scale=0.5,height=5cm]{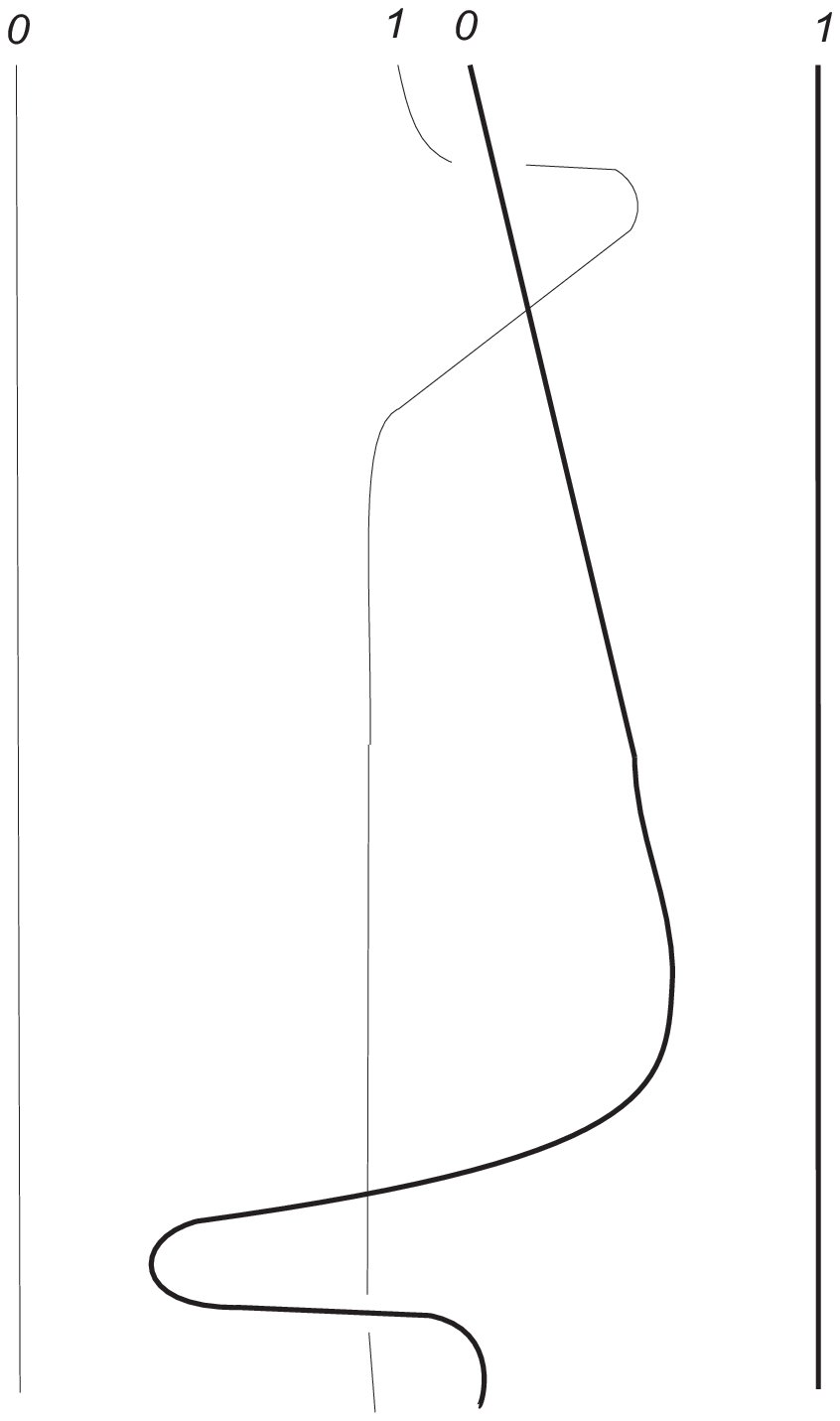}\ar[r]^-{\txt{(I)}}&
\includegraphics[scale=0.5,height=5cm]{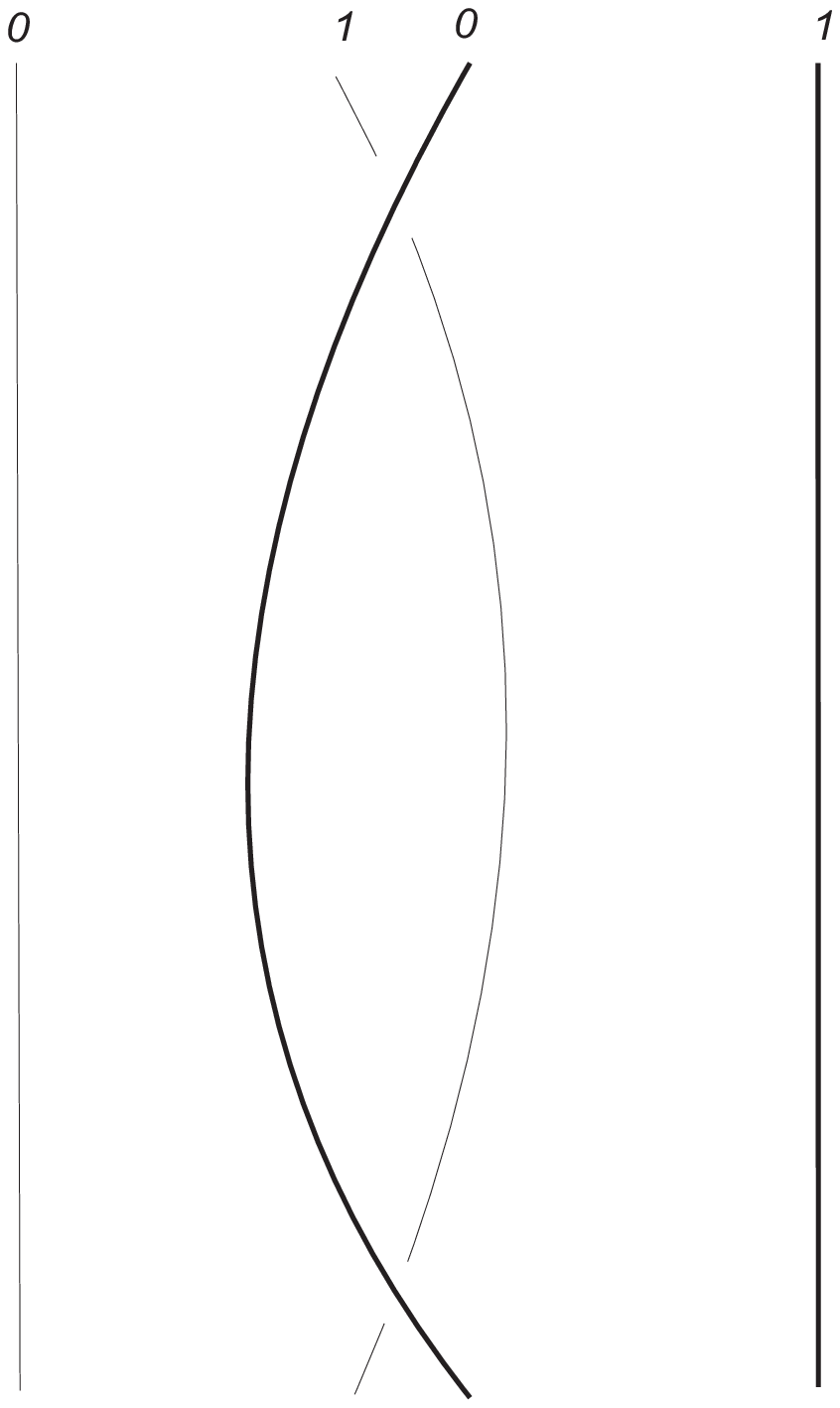}
\ar@{}[r]|-{{\displaystyle=~ id.}}&}$$ Suppose that
$t_{m,n}^{m+1}\bar{t}_{m,n}^{\,n+1}=id_{m,n}$ for $\forall\, m<p$
and $\forall\, n<q$, we need to prove
$t_{p,q}^{p+1}\bar{t}_{p,q}^{\,q+1}=id_{p,q}$. We have
$$
 t_{p,q}^{p+1}=
\raisebox{-5pc}{\includegraphics[scale=0.5,height=5cm]{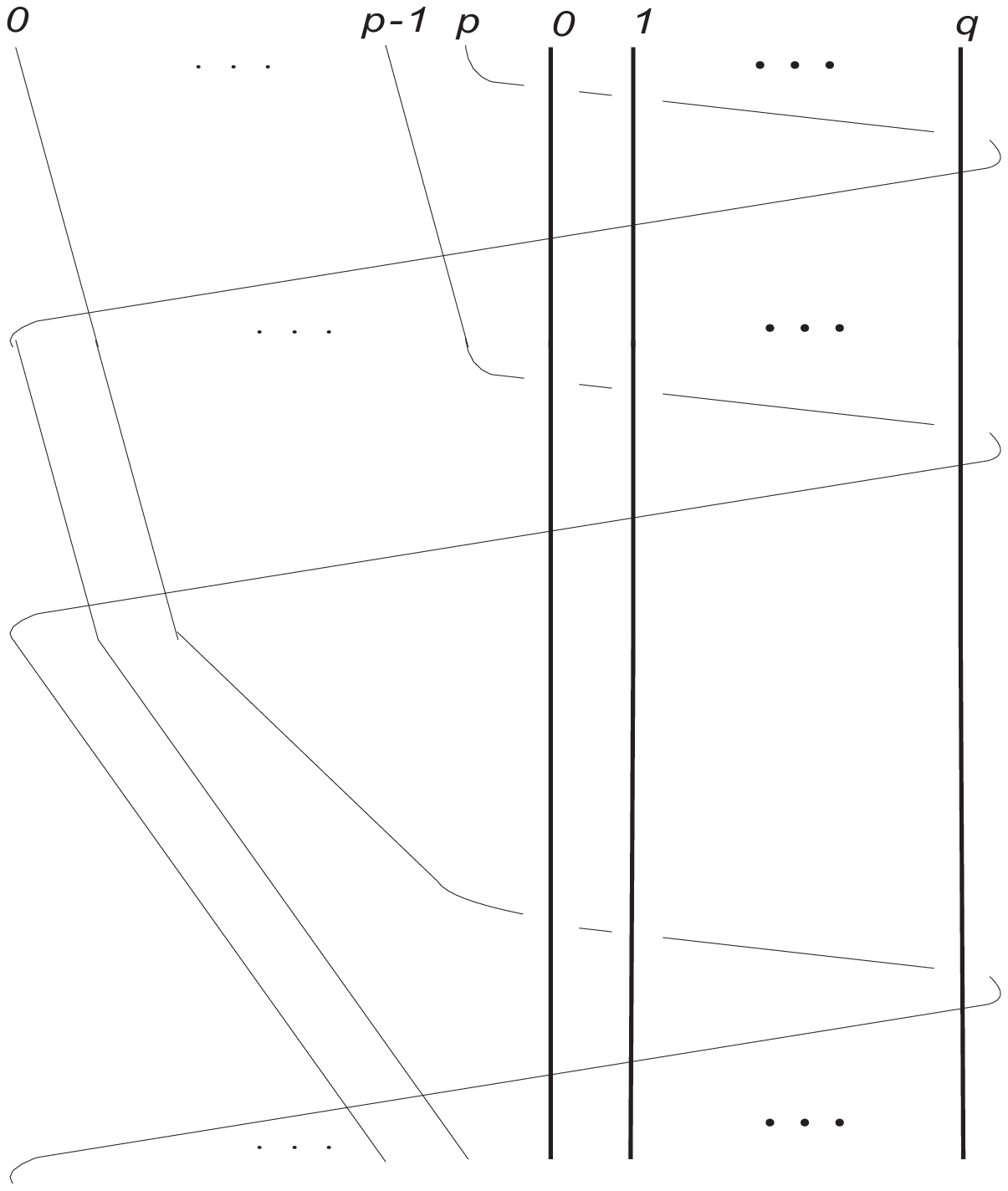}}\quad
\text{and}\quad
\bar{t}_{p,q}^{\,q+1}=\raisebox{-5pc}{\includegraphics[scale=0.5,height=5cm]{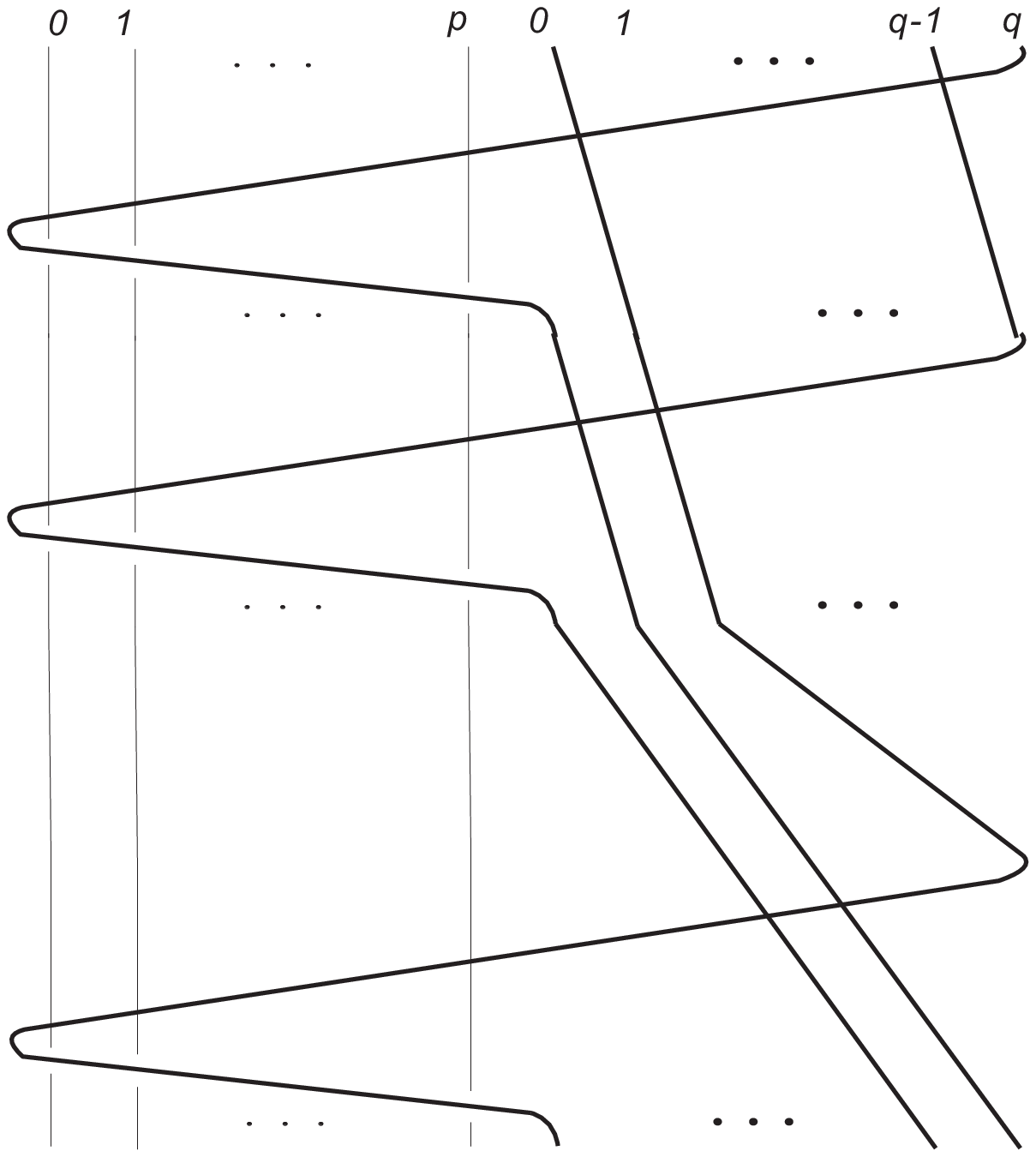}}.$$

We can use bands in  graphs to stand for parallel lines, that is,
lines without any intersections or crosses between themselves.

If we can draw the elements $b_0,\ldots,b_{p-1}$ of $B$ together by
a grey  band, and draw the elements $a_0,\ldots,a_{q-1}$ of $A$
together by a black band, then using the movements for the case
$m=n=1$, we will get the proposition. We just give the equivalent
moves  for turning the lines $b_0$ and $b_1$ in the graph of
$t^{p+1}_{p,q}$ to parallel lines, others can be done by similar
moves. The only intersections between $b_0$ and $b_1$ occur while
doing the $p$-th and $p+1$-th powers of $t_{p,q}$. So we concentrate
on that part of graph.

$$\xymatrix{\includegraphics[scale=0.5,height=4cm,width=5cm]{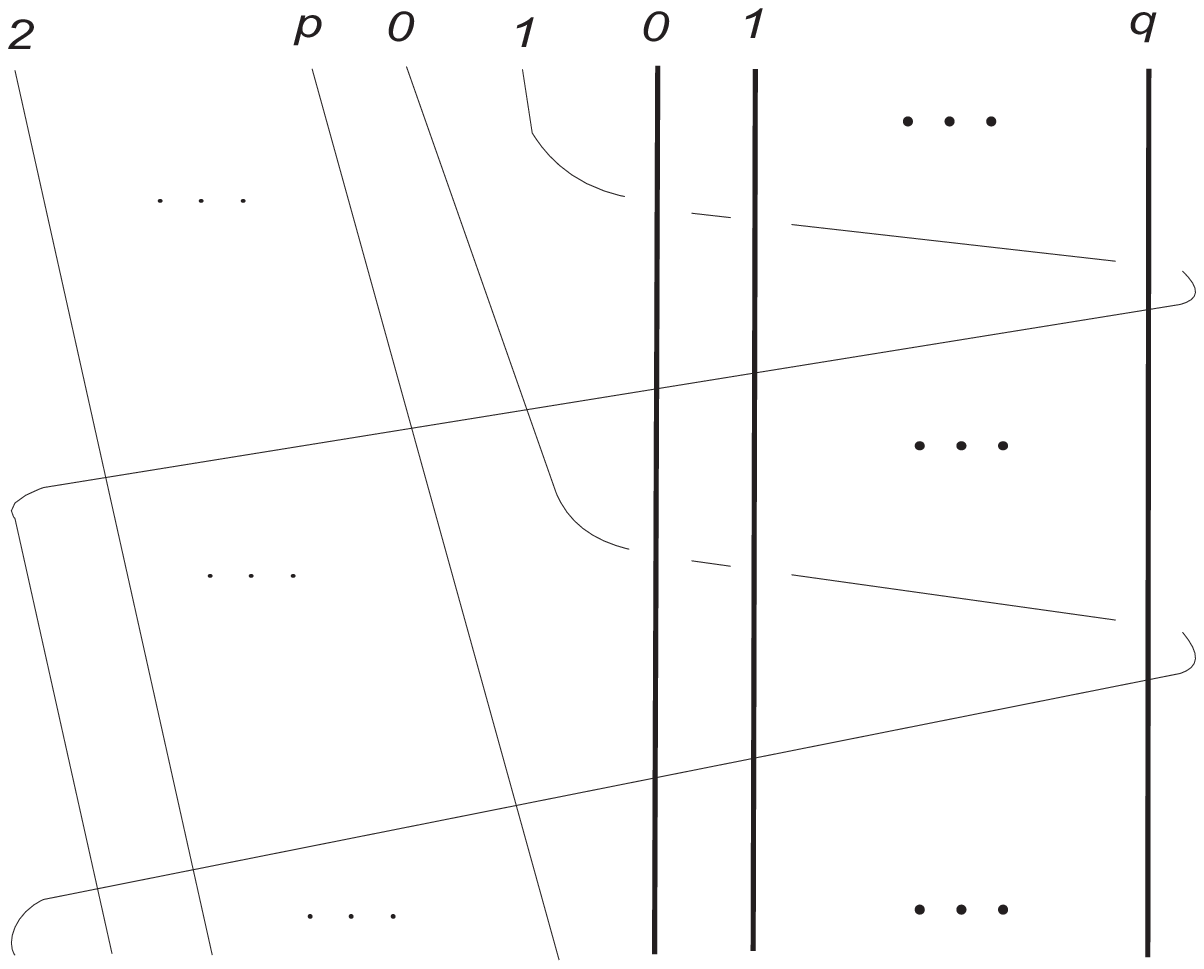}\ar[r]^-{\txt{(II)}}&
\includegraphics[scale=0.5,height=4cm,width=5cm]{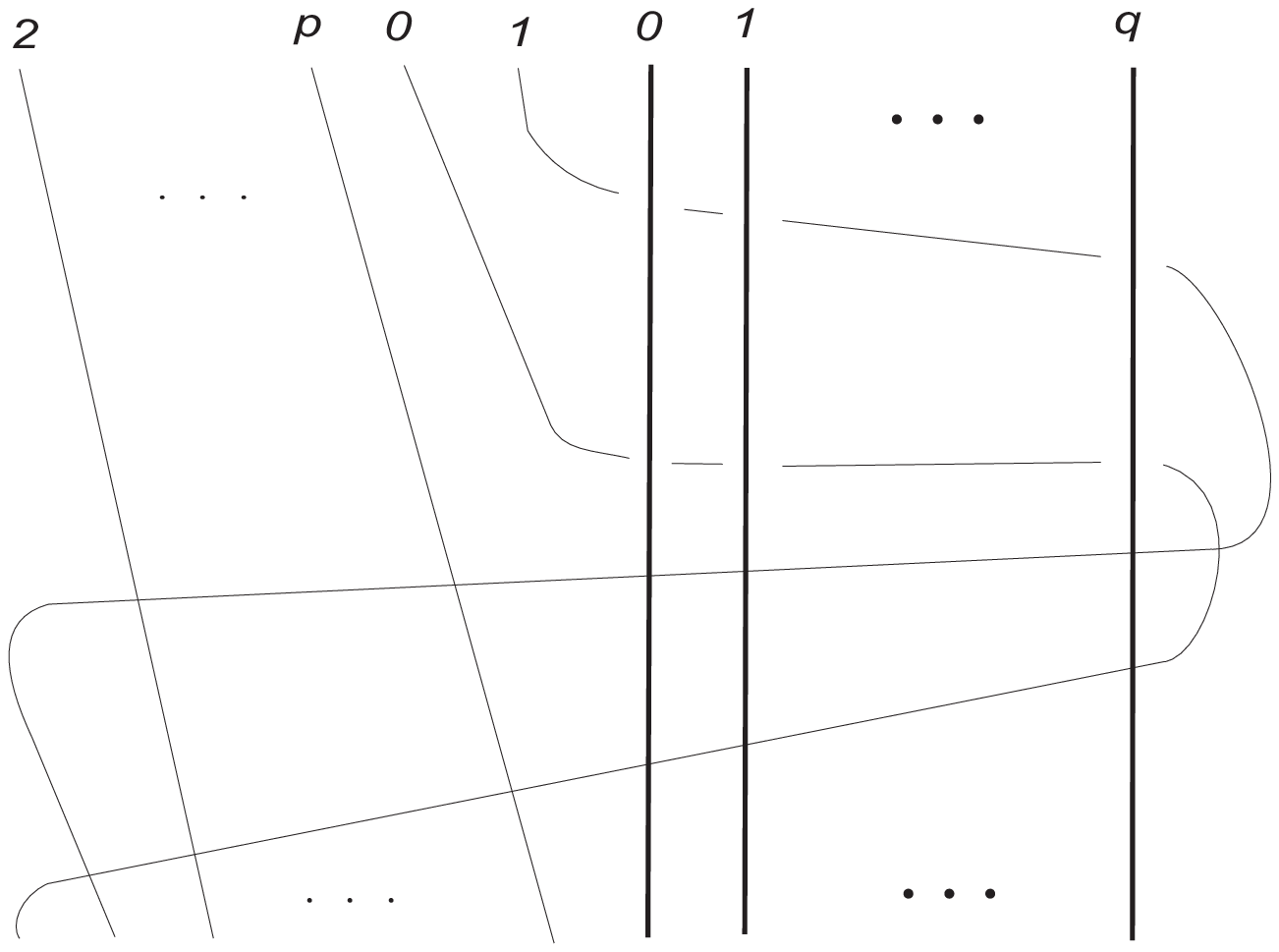}}$$
$$\xymatrix{
\ar[rr]^-{\txt{flip maps'\\ braid\\ relations}}
&&\includegraphics[scale=0.5,height=4cm,width=5cm]{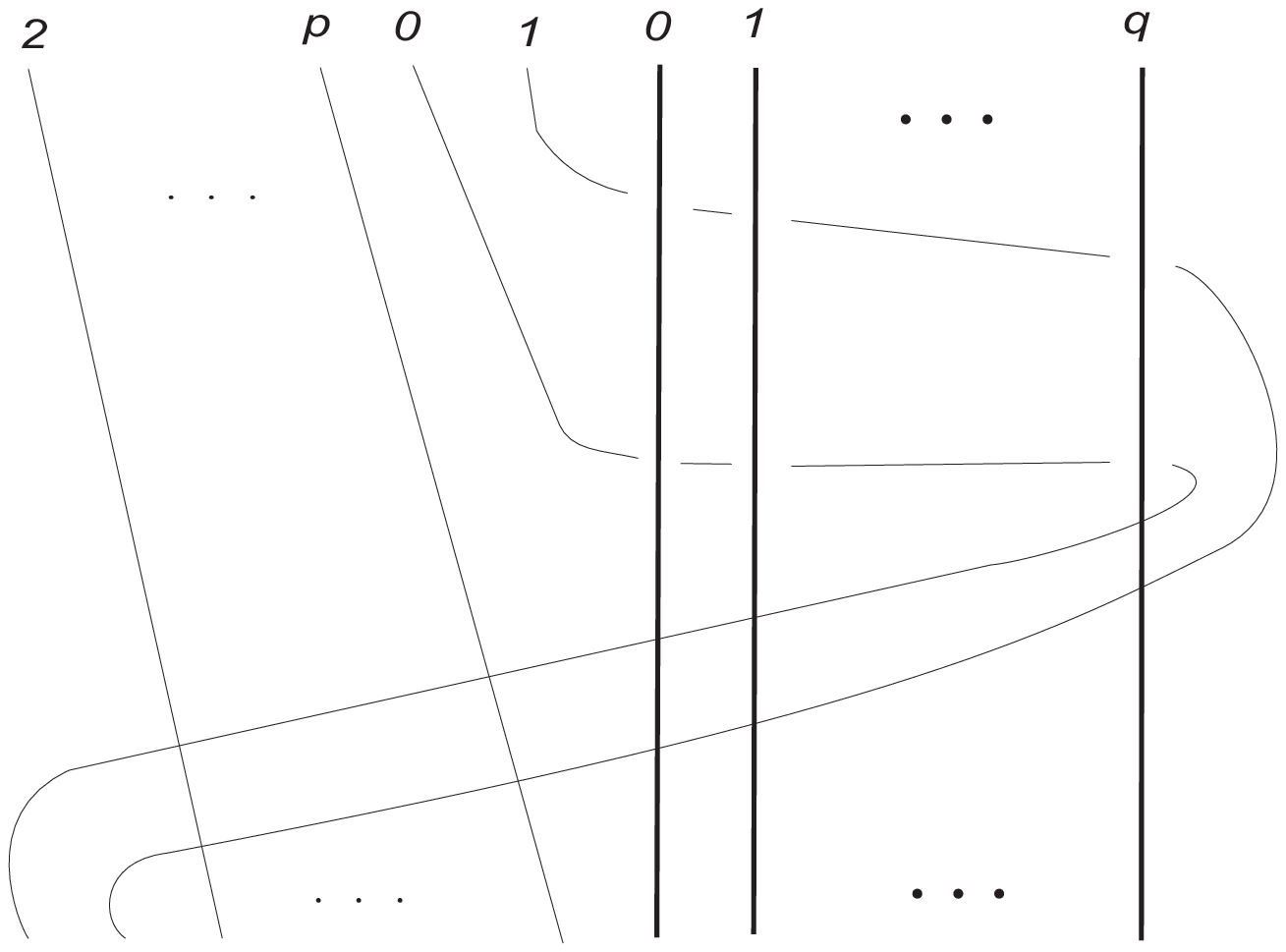}}.$$
\end{proof}

Let $\Delta_{\bullet}(A\natural B)$ be the \textit{diagonal} of the
cylindrical module $A\natural B$, i.e.,
$$\Delta_n(A\natural B)=A\natural B(n,n).$$ It is a cyclic module with face maps
$d_i=d_i^{n,n}\bar{d}_i^{n,n}$, degeneracy maps
$s_i=s_i^{n,n}\bar{s}_i^{n,n}$ and  the cyclic operator
$t_n=t_{n,n}\bar{t}_{n,n}$.

\begin{prop}\label{2.6}
$\Delta_{\bullet}(A\natural B)$ is isomorphic to
$C_{\bullet}(A\#_{_R}B)$ as cyclic modules.
\end{prop}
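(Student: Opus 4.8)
The plan is to exhibit an explicit $k$-linear isomorphism $\phi_n\colon \Delta_n(A\natural B)=B^{\ot(n+1)}\ot A^{\ot(n+1)}\ra (A\#_{_R}B)^{\ot(n+1)}=C_n(A\#_{_R}B)$ and to prove that $\phi=\{\phi_n\}_{n\ge0}$ intertwines all the structure maps. For $\phi_n$ I would take the \emph{interleaving shuffle}: beginning from $(b_0,\ldots,b_n\,|\,a_0,\ldots,a_n)$, slide each thin strand $b_j$ rightward through the block $A^{\ot(n+1)}$ by successive crossings $R$ until it comes to rest immediately after $a_j$, arriving at $(a_0\ot b_0,\ldots,a_n\ot b_n)$ with all entries modified by the crossings they have undergone. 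For $n=0$ this is just $\phi_0=R\colon B\ot A\ra A\ot B=A\#_{_R}B$. Since the permutation realised here is a shuffle --- it never reverses the relative order of two $A$-strands, nor of two $B$-strands --- no genuine braid move among three distinct strands is ever required, so the composite of $R$'s is independent of the order in which the individual crossings are performed and $\phi_n$ is a well-defined linear map; being a composite of the invertible $R$, it is bijective, with inverse the mirror shuffle assembled from $R^{-1}$. Thus each $\phi_n$ is a linear isomorphism, and it remains only to check naturality with respect to faces, degeneracies and the cyclic operator.

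I would dispatch the degeneracies and the inner faces $d_i$, $0\le i<n$, first, as these should be routine. The diagonal degeneracy $s_i=s_i^{n,n}\bar s_i^{n,n}$ inserts $1$ into the $i$-th $B$-slot and $1$ into the $i$-th $A$-slot; by the normality relations $R(1\ot a)=a\ot 1$ and $R(b\ot 1)=1\ot b$ these units slide freely across every crossing of $\phi$, so after interleaving they assemble into $1_A\ot 1_B$, the unit of $A\#_{_R}B$, reproducing $s_i$ on $C_\bullet(A\#_{_R}B)$. For the inner faces, the diagonal map $d_i=d_i^{n,n}\bar d_i^{n,n}$ multiplies $b_ib_{i+1}$ in $B$ and $a_ia_{i+1}$ in $A$ \emph{separately}, whereas the face of $C_\bullet(A\#_{_R}B)$ forms the twisted product $x_ix_{i+1}=a_iR(b_i\ot a_{i+1})b_{i+1}$. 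These are reconciled precisely by the quasitriangular identities $R(m\ot\id)=(\id\ot m)R_{12}R_{23}$ and $R(\id\ot m)=(m\ot\id)R_{23}R_{12}$: diagrammatically they let the two multiplication nodes be slid through the interleaving crossings, converting ``multiply first, then shuffle'' into ``shuffle, then multiply through the single leftover crossing $R(b_i\ot a_{i+1})$''. This is exactly the content of the quasitriangularity pictures displayed in Section~\ref{111}, so the verification is a short local diagram chase.

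The hard part will be compatibility with the cyclic operator $t$ (equivalently, with the last face $d_n$, which carries the same twist together with a rotation). Here the diagonal operator $t_n=t_{n,n}\bar t_{n,n}$ is built from the long crossings $\Theta_n$ (a string of $R$'s) and $\Gamma_n$ (a string of $R^{-1}$'s), so after conjugating by the interleaving shuffle one faces a tangle mixing $R$'s, $R^{-1}$'s and flip maps that must be reduced to the bare cyclic shift $t(x_0,\ldots,x_n)=(x_n,x_0,\ldots,x_{n-1})$ with its single twist in $x_nx_0=a_nR(b_n\ot a_0)b_0$. The simplification should be driven by the cancellation $RR^{-1}=\id$ of relation~(I) together with the flip-map ``braid'' relations~(II), unknotting $\Gamma_n$ against the shuffle while $\Theta_n$ supplies exactly the one crossing needed for the twisted entry; I expect this to be the longest computation, entirely parallel to the pictures already used to prove that $A\natural B$ is cylindrical. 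As a consistency check one should confirm that the isomorphism is one of \emph{cyclic} (not merely paracyclic) modules: since $t_{n,n}$ and $\bar t_{n,n}$ commute, $t_n^{\,n+1}=t_{n,n}^{\,n+1}\bar t_{n,n}^{\,n+1}=\id$ by the cylindrical relation $t_{m,n}^{\,m+1}\bar t_{m,n}^{\,n+1}=\id_{m,n}$ with $m=n$, matching $t^{n+1}=\id$ on $C_\bullet(A\#_{_R}B)$.

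Finally, I would organise the bookkeeping economically by invoking the presentation of cyclic modules recalled in Section~\ref{222}: since every face, degeneracy and the cyclic operator of \emph{both} modules is generated from the last face map $d_n$ and the extra degeneracy $s_{-1}$ by the \emph{same} formulas \eqref{1}--\eqref{2}, it suffices to prove that the bijections $\phi_n$ intertwine $d_n$ and $s_{-1}$; all remaining relations then follow formally. This collapses the work to the easy degeneracy check (normality) of the second paragraph and the single twisted-rotation check of the third, and lets one conclude that $\phi$ is an isomorphism of cyclic modules.
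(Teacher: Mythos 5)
Your proposal is correct and is essentially the paper's own proof: your interleaving shuffle is exactly the paper's map $\Phi=R_{2n+1,2n+2}(R_{2n-1,2n}R_{2n,2n+1})\cdots(R_{12}R_{23}\cdots R_{n+1,n+2})$, your mirror shuffle of $R^{-1}$'s is its $\Psi$, and your verification plan (quasitriangularity for the faces, normality for the degeneracies, relations (I) and (II) for the cyclic operator) is precisely the paper's graphical argument. The only cosmetic difference is your closing reduction to checking just $d_n$ and $s_{-1}$ via the generating presentation of Section~\ref{222}, which the paper does not bother with.
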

\begin{proof}Define  morphisms $\Phi_n: A\natural B(n,n)\ra
C_n(A\#_{_R}B)$ by
$$\Phi=R_{2n+1,2n+2} (R_{2n-1,2n}R_{2n,2n+1})
\cdots(R_{12}R_{23}\cdots R_{n+1,n+2}),$$
 and $\Psi_n:C_n(A\#_{_R}B)\ra A\natural
B(n,n)$ by
$$\Psi=R^{-1}_{n+1,n+2}(R^{-1}_{n,n+1}R^{-1}_{n+2,n+3})
\cdots (R^{-1}_{12}R^{-1}_{34}\cdots R^{-1}_{2n+1,2n+2}).$$
$$\Phi=\raisebox{-2.5pc}{\includegraphics[scale=0.5]{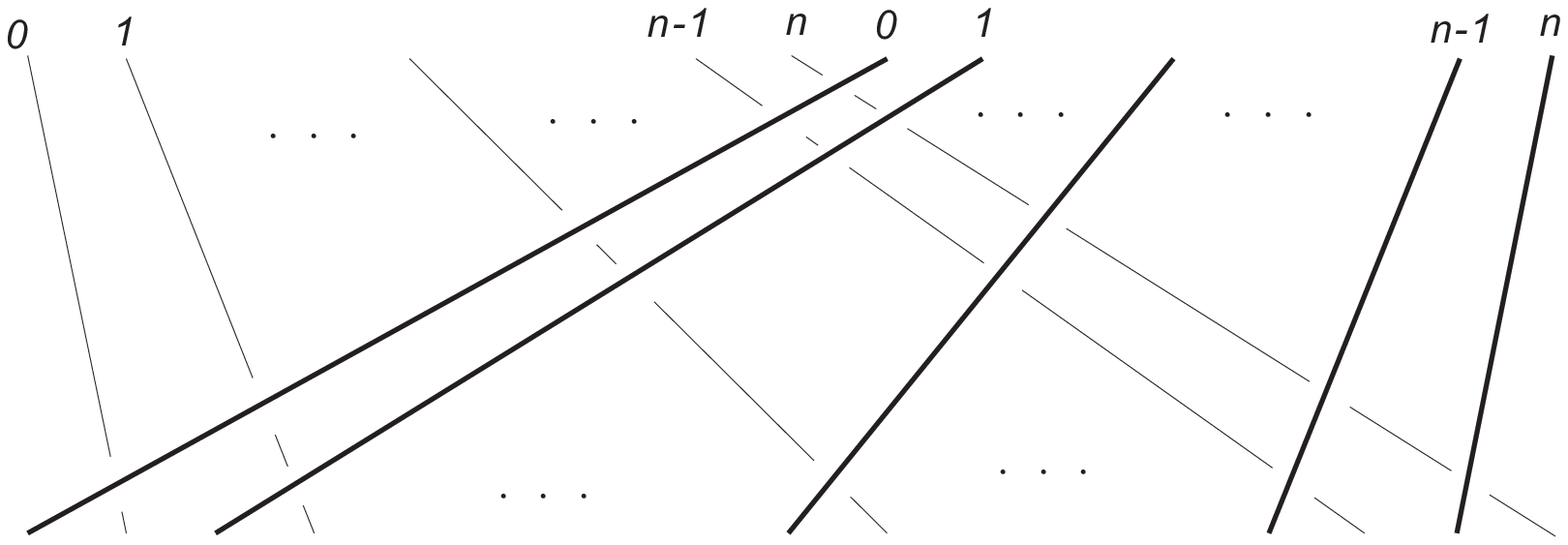}}$$

$$\Psi=\raisebox{-2.5pc}{\includegraphics[scale=0.5]{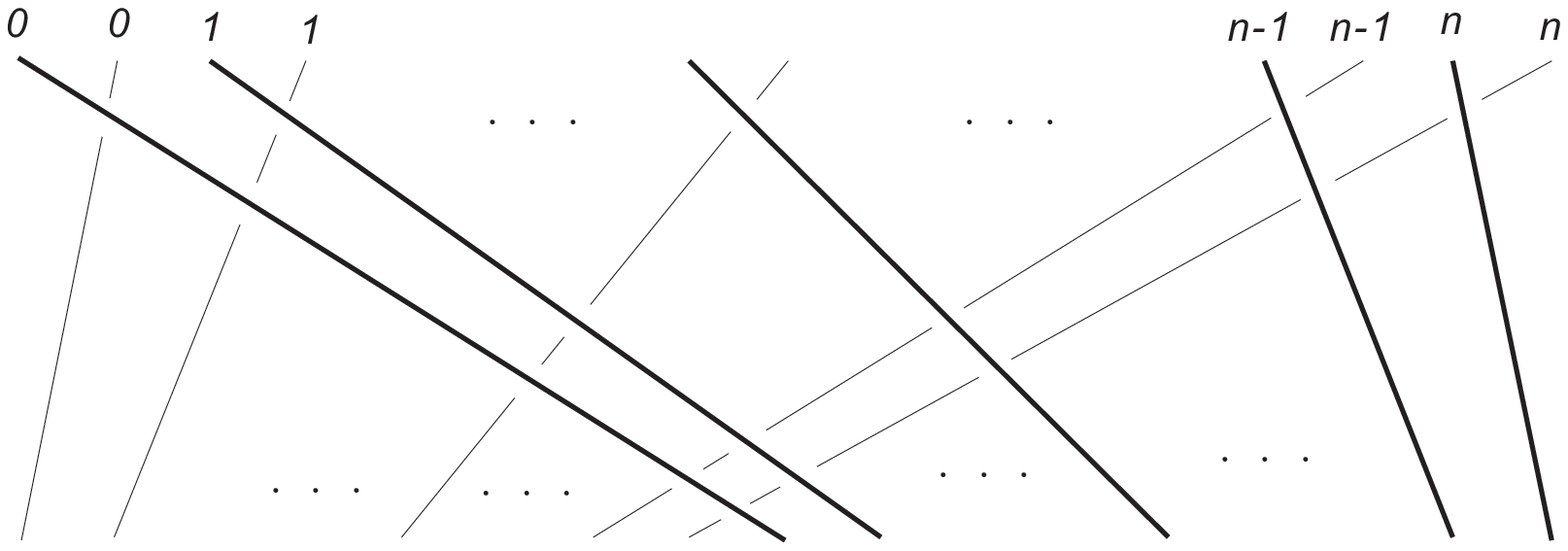}}$$

Note that
$R^{\pm1}_{i,i+1}R^{\pm1}_{j,j+1}=R^{\pm1}_{j,j+1}R^{\pm1}_{i,i+1}$
for $|i-j|>1$. So $\Phi$ and $\Psi$ are inverses to each other.

We need to prove that $\Phi$ and $\Psi$ are  morphisms of cyclic
modules. We only show that $\Phi$ commutates with the cyclic
operator and the face maps. It is similar for $\Psi$.

Again using the fourth picture in the process of turning
$t_{p,q}\bar{t}_{p,q}$ to $\bar{t}_{p,q}t_{p,q}$, we get
$$\xymatrix{\ar@{}[r]|-{\displaystyle \Phi t_{n,n}\bar{t}_{n,n}=}
&\quad\includegraphics[scale=0.5,height=5cm]{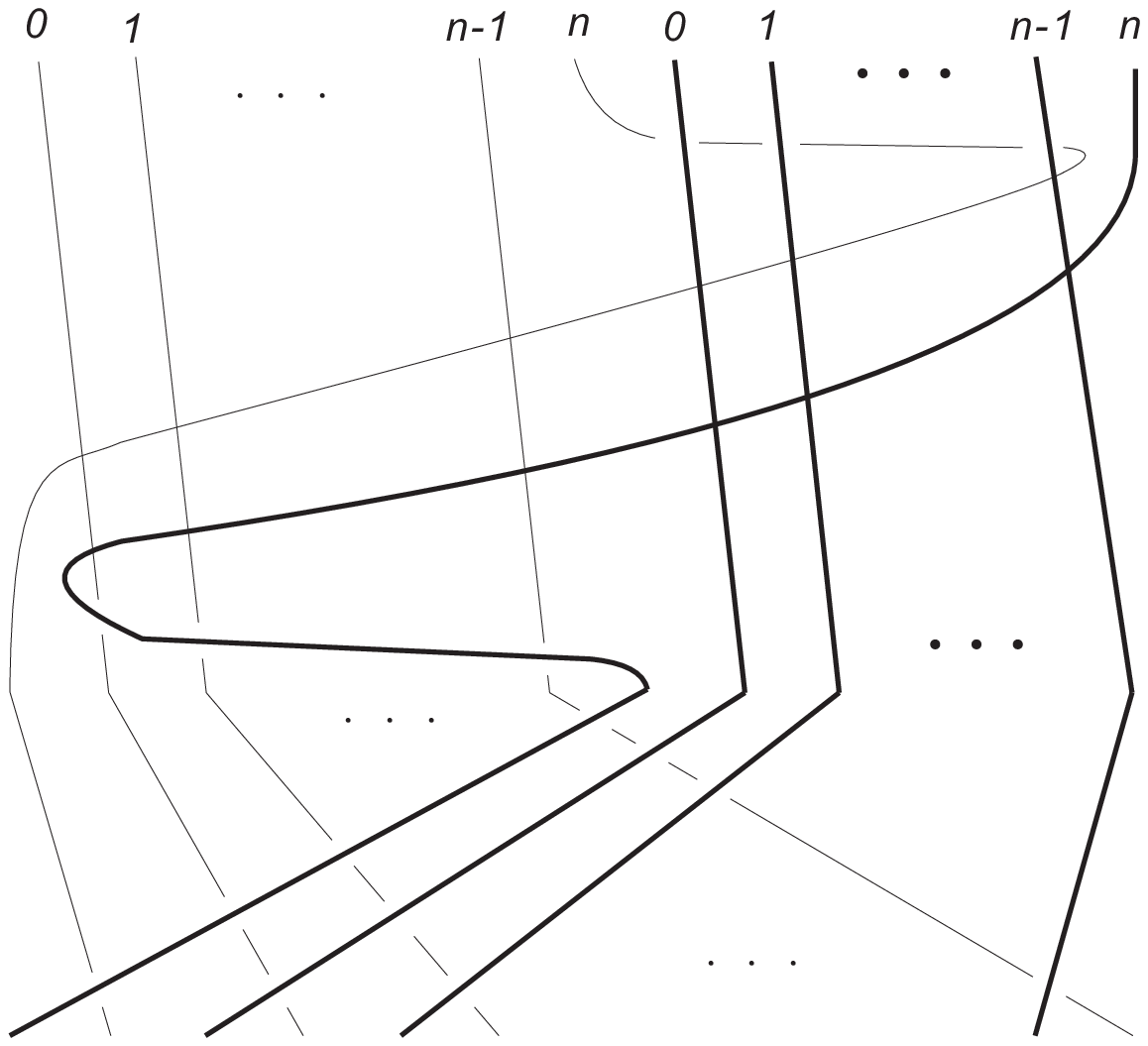}
\ar[r]^-{\txt{(I)}}&\includegraphics[scale=0.5,height=5cm]{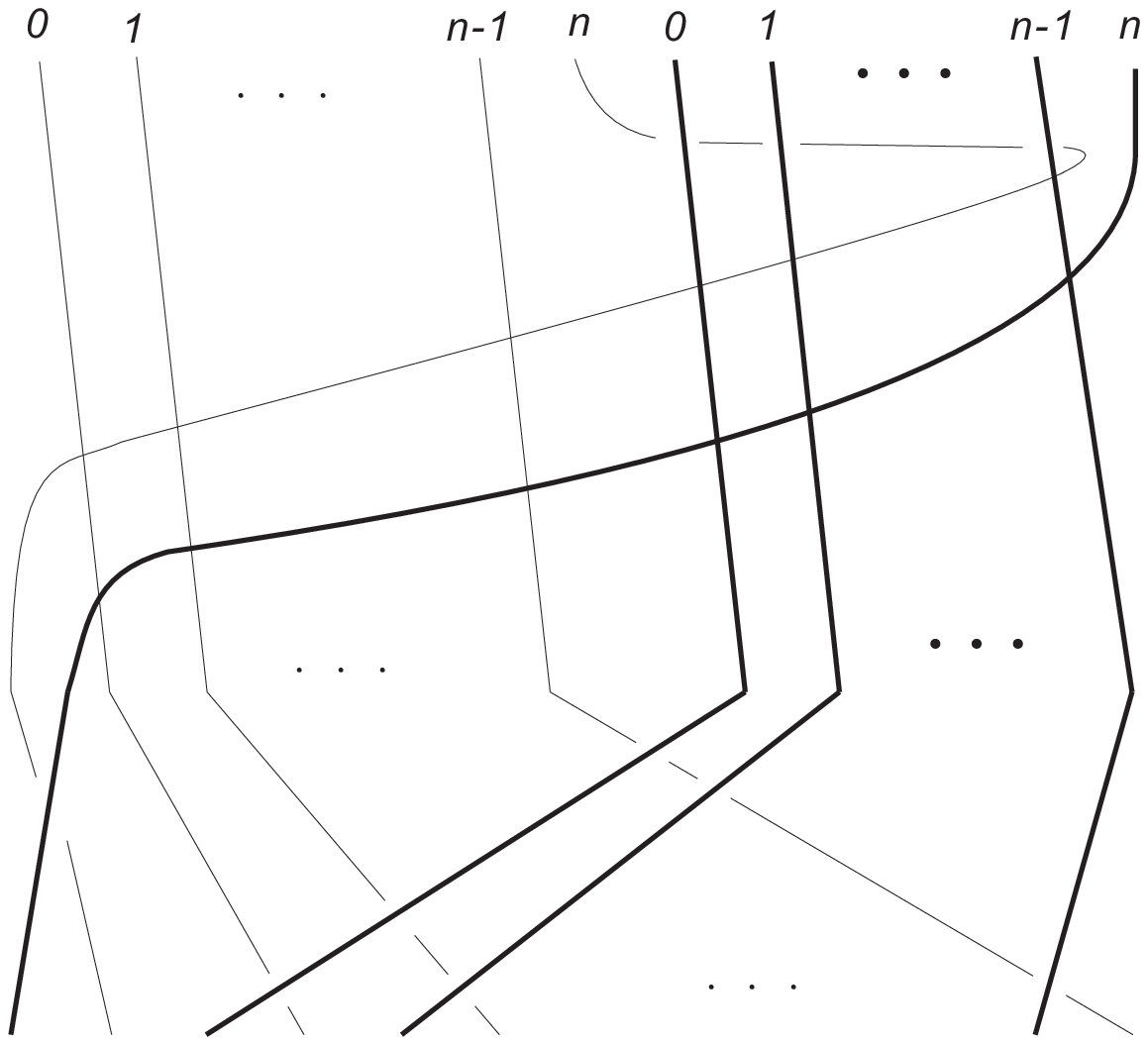}
}$$
$$\xymatrix{\ar[r]^-{\txt{(I)}}
&\quad\includegraphics[scale=0.5,height=5cm]{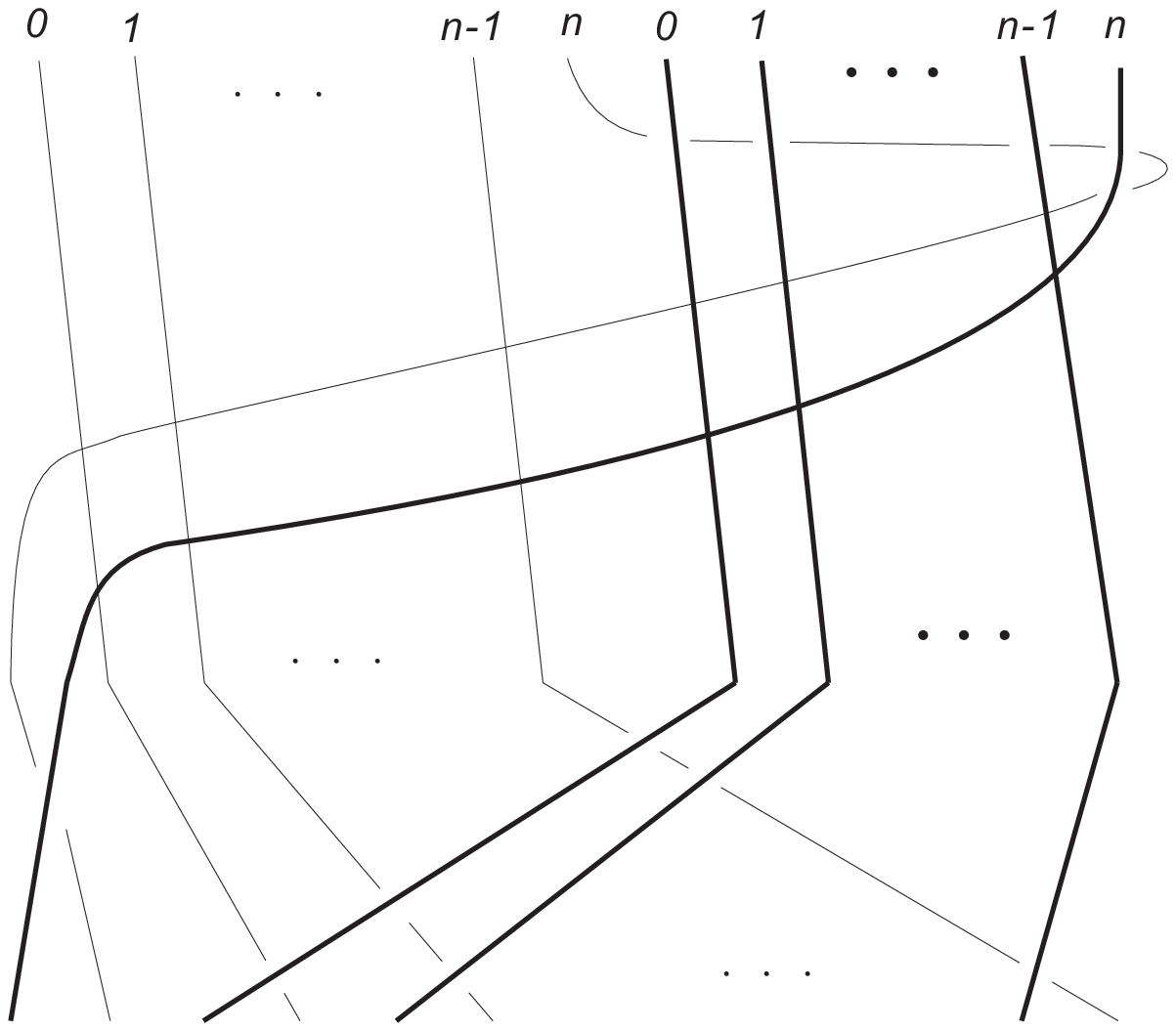}
\ar[r]^-{\txt{(I)}}_-{\txt{(II)}}&\includegraphics[scale=0.5,height=5cm]{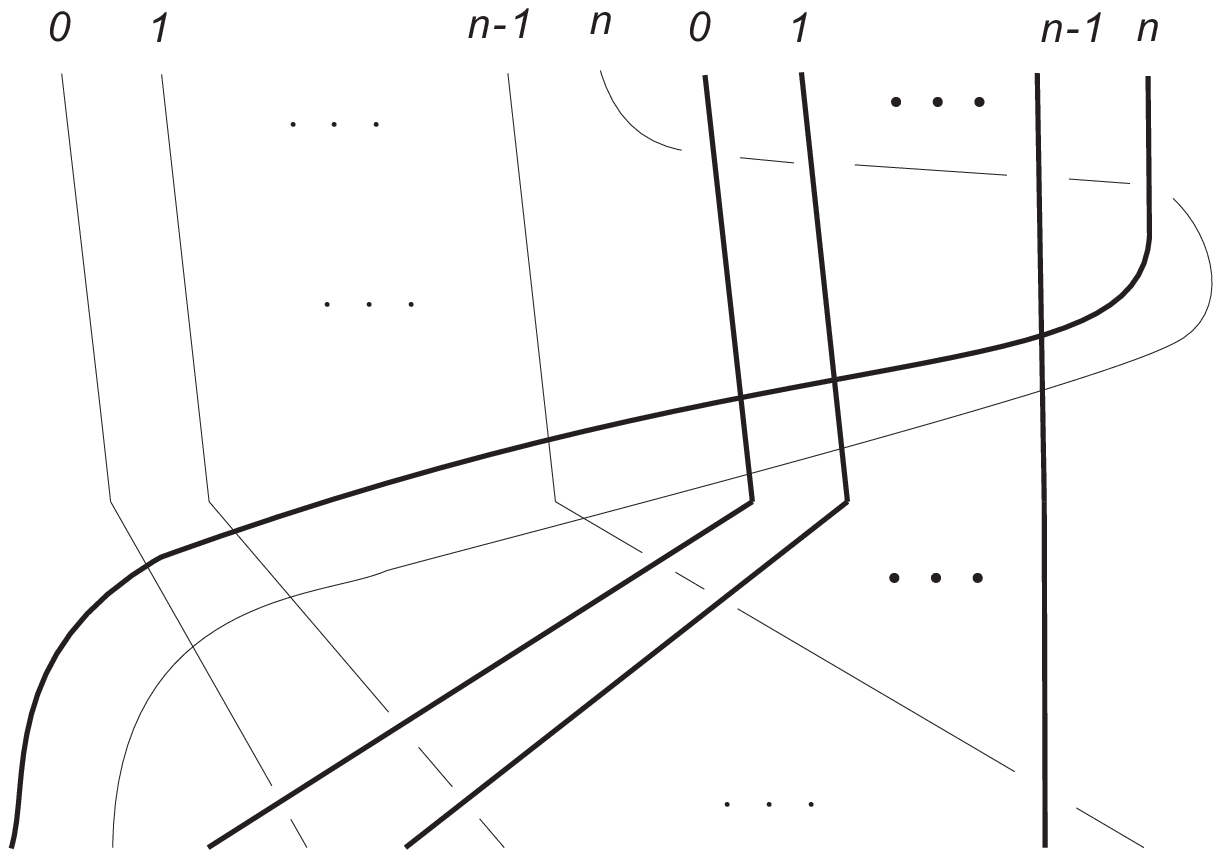}
}$$
$$\xymatrix{\ar[r]^-{\txt{(II)}}
&\quad\includegraphics[scale=0.5,height=5cm]{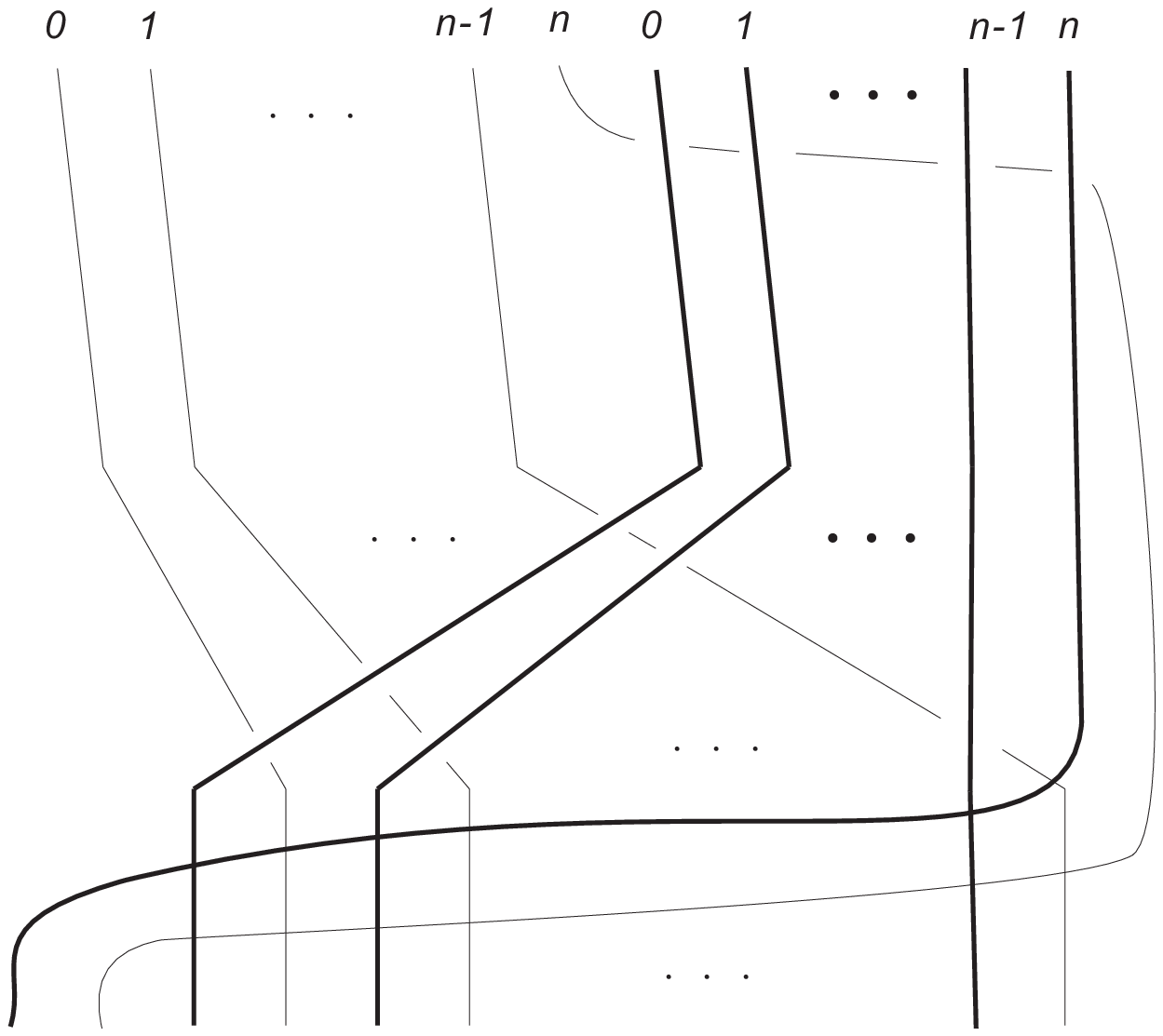}
\ar[r]^-{\cong}&\includegraphics[scale=0.5,height=5cm]{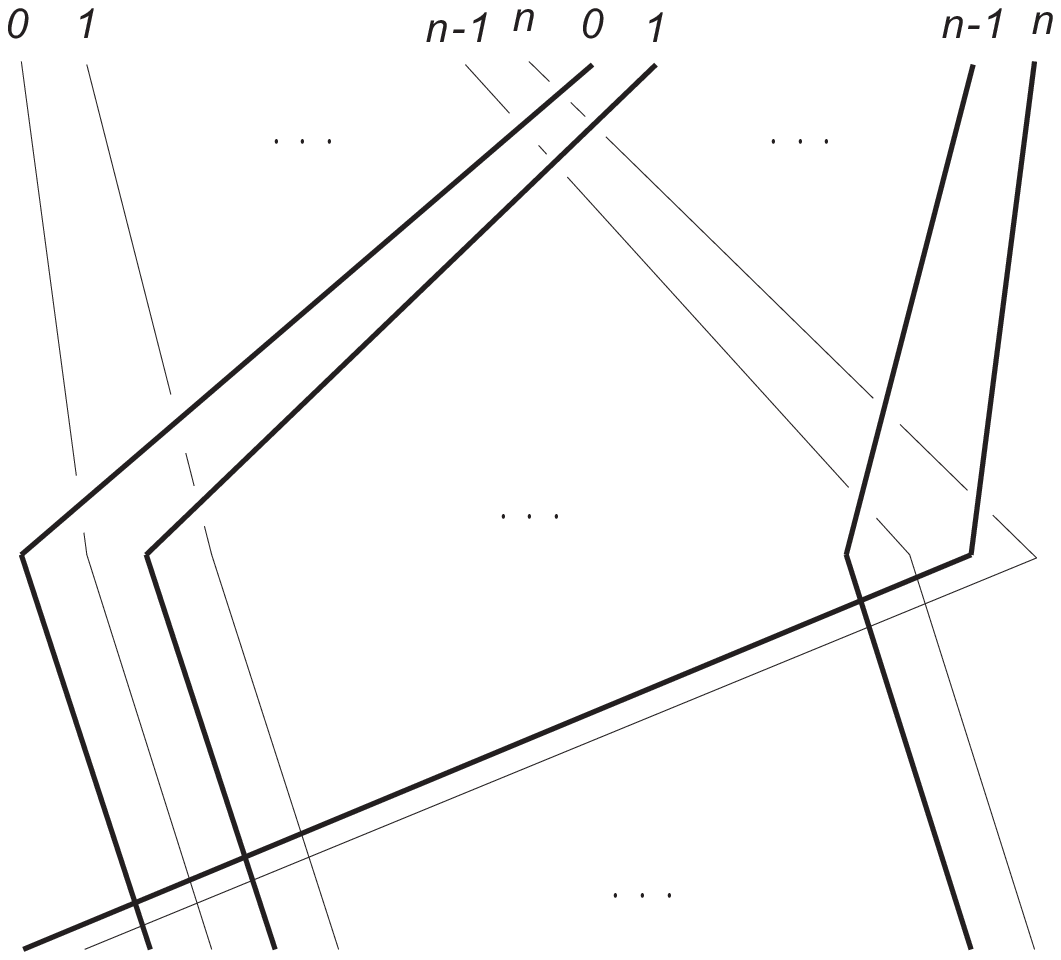}\ar@{}[r]|-{\displaystyle
=t\Phi. }& }$$
 Since $R$ and $R^{-1}$ are quasitriangular,  for
$0\leq i<n$,
$$\xymatrix{\ar@{}[r]|-{\displaystyle d_i\Phi =}
&\quad\includegraphics[scale=0.5,height=3.5cm,width=5.5cm]{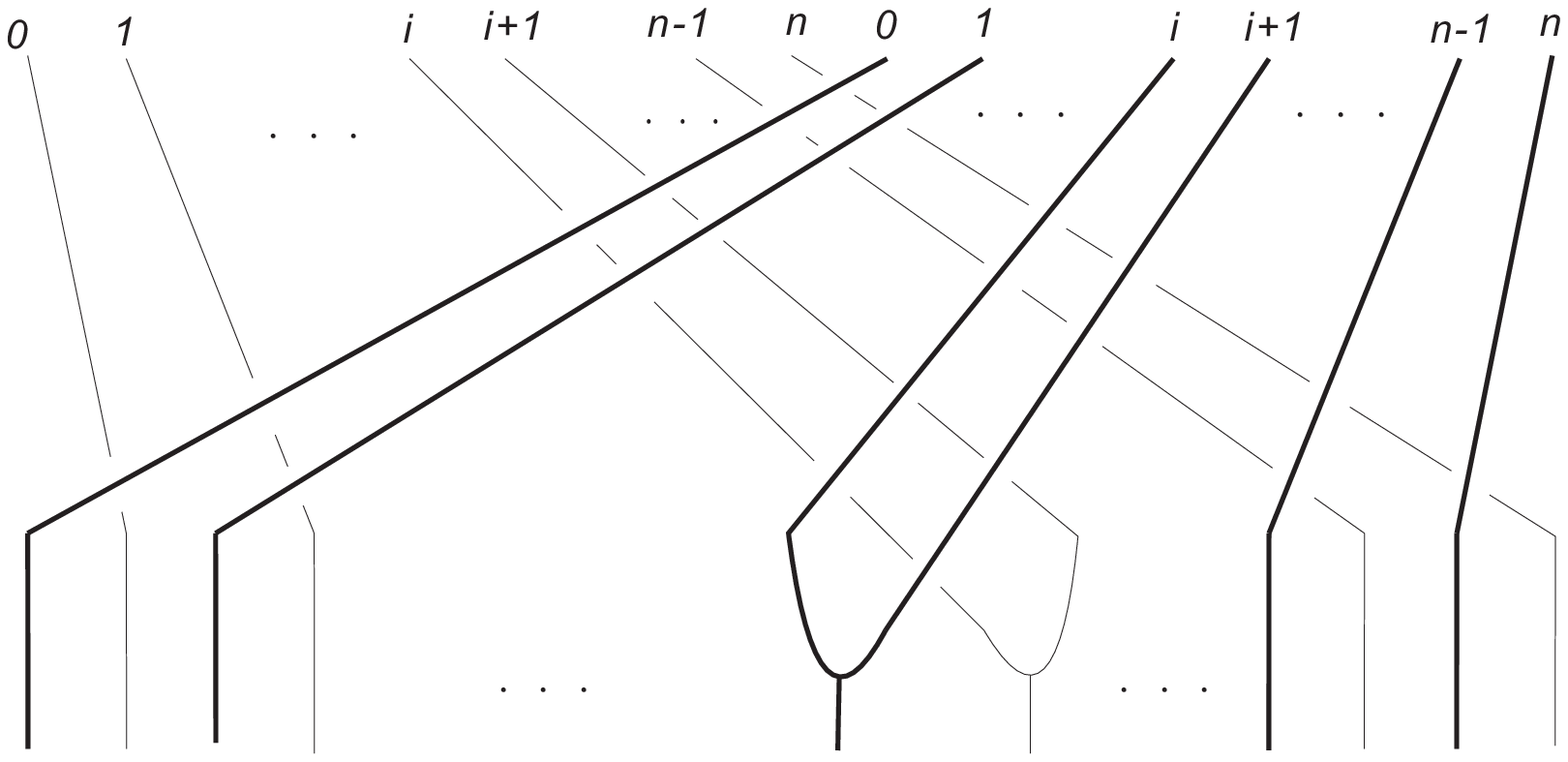}
\ar@{}[r]|-{\cong}&\includegraphics[scale=0.5,height=3.5cm,width=5.5cm]{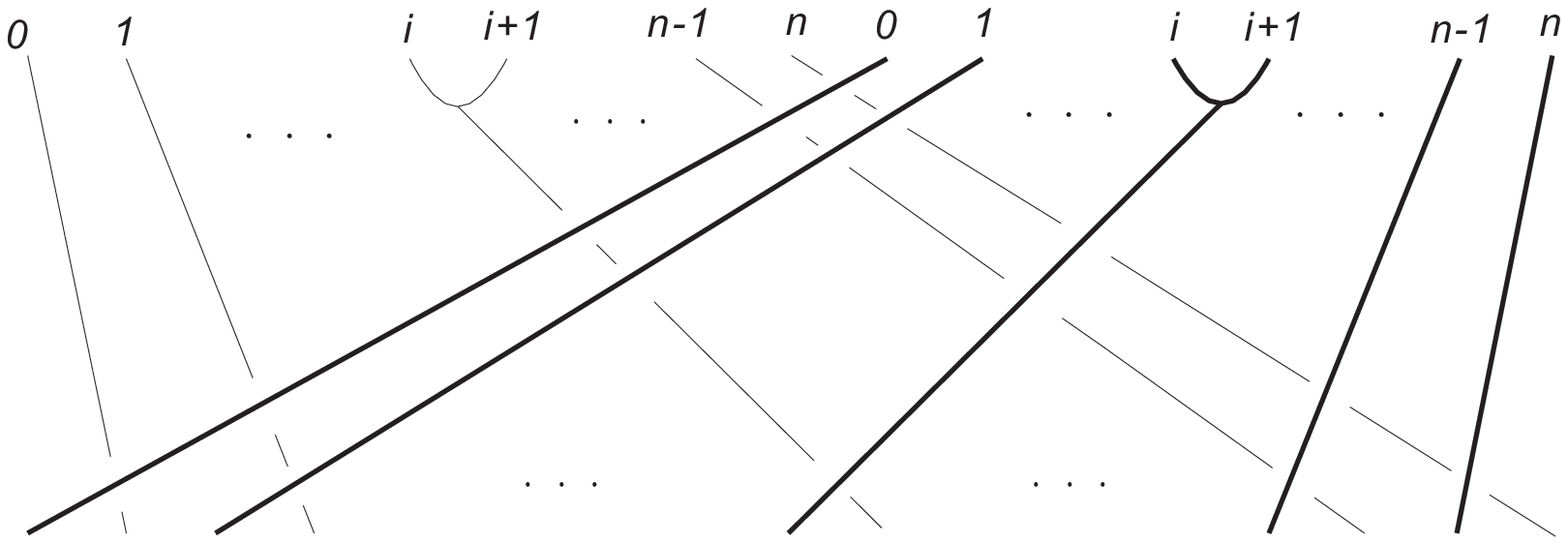}\quad\ar@{}[r]|-{\displaystyle
=\Phi\bar{d}^{n,n}_id^{n,n}_i. }& }$$
\end{proof}

\section{\label{33}Application of the generalized Eilenberg-Zilber theorem}
Let  $(\{C_{m,n}\}_{m,n\geq
0},d^{m,n}_i,s^{m,n}_i,t_{m,n},\bar{\,d}^{m,n}_j,\bar{s}^{m,n}_j,\bar{t}_{m,n})$
be a cylindrical module. We can set as in \eqref{6} the degree $-1$
endomorphism $\mathrm{b}$ (resp.$\bar{\,\mathrm{b}}$), the degree
$1$ endomorphism $\mathrm{B}$ (resp.$\bar{~\mathrm{B}}$) and the
degree $0$ endomorphism $\mathrm{T}$ (resp. $\bar{\mathrm{T}}$)
associated with $d_i,s_i,t$ (resp. $\bar{d}_j,\bar{s}_j,\bar{t}$).
The total parachain complex is
 a mixed complex. Explicitly, let $\mathbb{C}_n=\bigoplus_{i+j=n}C_{i,j}$,
$\mathbbm{b}=\mathrm{b}+\!\!\bar{~\mathrm{b}}$ and
$\mathbb{B}=\mathrm{B}+\mathrm{T}\bar{~\mathrm{B}}$. Since
$C_{\bullet,\bullet}$ is a cylindrical module,
$\mathrm{T\bar{T}}=1$,
  $\mathrm{\bar{\,b}B=-B\bar{\,b}}$ and
$\mathrm{\bar{\,B}b=-b\bar{\,B}}$. Then by Lemma \ref{2.2},
\begin{align*}
\mathbbm{b}\mathbb{B}+\mathbb{B}\mathbbm{b} &
=(\mathrm{b}+\!\!\bar{~\mathrm{b}})(\mathrm{B}+\mathrm{T}\bar{~\mathrm{B}})
 +(\mathrm{B}+\mathrm{T}\bar{~\mathrm{B}})(\mathrm{b}+\!\!\bar{~\mathrm{b}})\\
&=\mathrm{bB}+\mathrm{Bb}+\!\!\bar{~\mathrm{b}}\mathrm{B}
+\mathrm{B\bar{\,b}}+\mathrm{T(\!\bar{~B}b+b\!\bar{~B})}+\mathrm{T(\!\!\bar{~{b}}\!\bar{~{B}}+\!\bar{~B}\bar{\,b})}\\
&=1-\mathrm{T}+\mathrm{T}(1-\mathrm{\bar{T}})=0,
\end{align*}
$(\mathbb{C}_{\bullet},\mathbbm{b},\mathbb{B})$ is a mixed complex.

The generalized Eilenberg-Zilber theorem for paracyclic modules was
proved by Getzler and Jones \cite{GJ}  using topological method,
later it was reproved by Khalkhali  and Rangipour \cite{KR2} using
an algebraic method. The theorem tells us that, for a cylindrical
module there exists a quasi-isomorphism from its total  mixed
complex to its diagonal mixed complex. Due to Proposition \ref{2.6},
we have:

\begin{theorem} Let $A\#_{_R}B$ be a strong smash product algebra, $A\natural
B$ a cylindrical module defined in \eqref{7} and \eqref{8}. Then
there exists a quasi-isomorphism of mixed complexes
$\Tot_{\bullet}(A\natural B)$
  and $C_{\bullet}(A\#_{_R}B)$.
\end{theorem}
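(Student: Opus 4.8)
The plan is to obtain the theorem as a two-step composition, with essentially all of the substantive work already in place. By the preceding proposition, $(A\natural B,d_i,s_i,t,\bar{d}_j,\bar{s}_j,\bar{t})$ is a cylindrical module; hence the computation at the beginning of Section \ref{33} shows that its total complex $(\mathbb{C}_{\bullet},\mathbbm{b},\mathbb{B})$ is a mixed complex, which is by definition $\Tot_{\bullet}(A\natural B)$. On the other side, the diagonal $\Delta_{\bullet}(A\natural B)$ is a cyclic module, and the general fact recorded after Lemma \ref{2.2} equips it with the mixed-complex structure of \eqref{6}. Thus both objects named in the statement are genuine mixed complexes, and it remains only to connect them by a quasi-isomorphism.

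First I would invoke the generalized Eilenberg-Zilber theorem for cylindrical modules (Getzler-Jones \cite{GJ}, reproved by Khalkhali-Rangipour \cite{KR2}), applied to the cylindrical module $A\natural B$. This yields a quasi-isomorphism of mixed complexes from $\Tot_{\bullet}(A\natural B)$ to the mixed complex associated with the diagonal cyclic module $\Delta_{\bullet}(A\natural B)$. No new input is required here: the hypothesis of the cited theorem is exactly that $A\natural B$ is a cylindrical module, which was established above.

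Next I would transport this across the identification of the diagonal with the cyclic module of the algebra. By Proposition \ref{2.6}, $\Phi_{\bullet}$ is an isomorphism of cyclic modules $\Delta_{\bullet}(A\natural B)\to C_{\bullet}(A\#_{_R}B)$, with inverse $\Psi_{\bullet}$. Since the differentials $\mathrm{b}$ and $\mathrm{B}$ of \eqref{6} are assembled functorially out of the face maps, the degeneracies and the cyclic operator, any morphism of cyclic modules automatically commutes with $\mathrm{b}$ and $\mathrm{B}$; in particular $\Phi_{\bullet}$ is an isomorphism of the associated mixed complexes, hence a quasi-isomorphism. Composing the Eilenberg-Zilber quasi-isomorphism with $\Phi_{\bullet}$ then produces the required quasi-isomorphism $\Tot_{\bullet}(A\natural B)\to C_{\bullet}(A\#_{_R}B)$.

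The only point that calls for a line of genuine verification, and the natural candidate for an oversight, is this last functoriality claim: that an isomorphism of cyclic modules promotes to an isomorphism of mixed complexes. Concretely one checks $\Phi\,\mathrm{b}=\mathrm{b}\,\Phi$ and $\Phi\,\mathrm{B}=\mathrm{B}\,\Phi$, which reduce via \eqref{1}, \eqref{2} and \eqref{6} to the statement that $\Phi$ commutes with each $d_i$, each $s_i$ and with $t$ in every degree. Commutation with $t$ and with the face maps is precisely what is verified in the proof of Proposition \ref{2.6}, and the degeneracy case is entirely analogous, so this step is routine. Everything difficult, namely the existence of the Eilenberg-Zilber quasi-isomorphism, is furnished by the cited theorem, so no further combinatorial, homological, or spectral-sequence argument is needed at this stage.
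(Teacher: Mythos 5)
Your proposal is correct and follows essentially the same route as the paper: the paper likewise obtains the theorem by combining the generalized Eilenberg--Zilber theorem for cylindrical modules (Getzler--Jones, Khalkhali--Rangipour) with Proposition \ref{2.6}, which identifies $\Delta_{\bullet}(A\natural B)$ with $C_{\bullet}(A\#_{_R}B)$ as cyclic modules and hence as mixed complexes. Your added remark that a morphism of cyclic modules automatically commutes with $\mathrm{b}$ and $\mathrm{B}$ is the (routine) point the paper leaves implicit.
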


It was discovered by Getzler and Jones \cite{GJ} that the Hochschild
homology, the cyclic homology, the negative cyclic homology and the
periodic cyclic homology can be unified to be cyclic homologies of a
mixed complex with coefficients. Specifically, let $M_{\bullet}$ be
a mixed complex and $W$ be a graded $k[u]$-module, denote
$M_{\bullet}[[u]]\ot_{k[u]}W $ by $M_{\bullet}\boxtimes W$. Note
that this tensor product is a graded tensor product. Let
$(C_{\bullet}, \mathrm{b, B})$ be the mixed complex associated to
its cyclic module structure. $C_{\bullet}\boxtimes W$ is a complex
with  the differential $(\mathrm{b}+u\mathrm{B})\ot_{k[u]} id_W$.
Call the homology of the complex $C_{\bullet}\boxtimes W$
\textit{the cyclic homology of the mixed complex of $C_{\bullet}$
with coefficients in $W$} and denote it by $\Hc_{*}(C_{\bullet};
W)$. Then
 for $W=k[u]$ (resp. $k[u,u^{-1}],k[u,u^{-1}]/uk[u]$ and $k[u]/uk[u]$)
$\Hc_{*}(C_{\bullet}; W)=\Hc^{-}_{*}(C_{\bullet})$ (resp. $
\mathrm{HP}_{*}(C_{\bullet}),\Hc_{*}(C_{\bullet})$ and
$\Hh_{*}(C_{\bullet})$).
 If  $C$ is the usual
cyclic module associated with an algebra $A$, then we simply denote
$\Hc_*(C_{\bullet}(A);W)$ by $\Hc_*(A;W)$.

 The first author would like to thank Professor
Getzler for pointing out the flatness condition concealed here,
which turns out useful in the consequent arguments.

\begin{lemma}\label{3.02}
Let $k$ be a field, $V$ a $k$-vector space and $u$ a variable. Then
$V[[u]]$ is a flat $k[u]$-module.
\end{lemma}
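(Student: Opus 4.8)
The plan is to reduce flatness to torsion-freeness, exploiting the fact that $k[u]$ is a principal ideal domain whenever $k$ is a field. First I would invoke the standard structural theorem that over a PID a module is flat if and only if it is torsion-free; this is the conceptual backbone of the argument, and it is exactly what makes the hypothesis that $k$ be a field essential, since that is what guarantees $k[u]$ is a PID (in fact a Euclidean domain). Everything then comes down to verifying that $V[[u]]$ carries no $k[u]$-torsion. An entirely equivalent route would be the ideal-theoretic criterion for flatness: $V[[u]]$ is flat iff $I\ot_{k[u]}V[[u]]\ra V[[u]]$ is injective for every finitely generated ideal $I$; but over the PID $k[u]$ every such $I$ is principal, $I=(p)\cong k[u]$ as a module, so this map is just multiplication by $p$, and its injectivity is again precisely torsion-freeness.

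For the torsion-freeness I would argue by inspecting lowest-order terms. Take a nonzero polynomial $p\in k[u]$ and a nonzero series $s=\sum_{i\geq 0}v_i u^i\in V[[u]]$, and let $c_m u^m$ be the lowest nonzero term of $p$ and $v_\ell u^\ell$ the lowest nonzero term of $s$. Then the coefficient of $u^{m+\ell}$ in the product $ps$ is exactly $c_m v_\ell$, and since $c_m\in k$ is a nonzero scalar and $v_\ell\in V$ a nonzero vector, we have $c_m v_\ell\neq 0$ because $V$, being a vector space over the field $k$, is a torsion-free $k$-module. Hence $ps\neq 0$, so $V[[u]]$ has no $k[u]$-torsion, and the PID dictionary gives flatness.

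The one point that genuinely deserves care is notational rather than mathematical: one must not conflate $V[[u]]$ with $V\ot_k k[[u]]$. When $V$ is infinite-dimensional the latter is a \emph{proper} submodule of the former, consisting only of those series whose coefficients span a fixed finite-dimensional subspace of $V$; flatness of $V\ot_k k[[u]]$ would follow trivially from $k[[u]]$ being free over $k[u]$, whereas the actual object $V[[u]]$ appearing in the grading $V_n[[u]]=\sum_{i\geq 0}V_{n+2i}u^i$ requires the torsion-free argument above. Note also that the field hypothesis is used twice and both uses are indispensable—once so that $k[u]$ is a PID, and once so that $V$ is torsion-free over $k$. I would not expect any serious obstacle; the entire content is the standard PID equivalence together with the one-line degree computation, so the proof should be short.
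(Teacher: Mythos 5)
Your proposal is correct and follows exactly the paper's own argument: over the PID $k[u]$ flatness is equivalent to torsion-freeness, and $V[[u]]$ is torsion-free. The only difference is that you spell out the lowest-order-term verification of torsion-freeness (and the caution about $V[[u]]$ versus $V\ot_k k[[u]]$), which the paper dismisses with ``clearly.''
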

\begin{proof}Since $k[u]$ is a principal ideal domain, a
$k[u]$-module is flat if and only if it is  torsion-free. Clearly,
$V[[u]]$ is a torsion-free $k[u]$-module.
\end{proof}

\begin{lemma}
Let $\mathcal {R}$ be a ring, $M$ a left $\mathcal {R}$-module and
$P_{\bullet},Q_{\bullet}$ bounded below complexes of flat right
$\mathcal{R}$-modules. If $P_{\bullet}$ and $Q_{\bullet}$ are
quasi-isomorphic, then
$$\Ho_n(P_{\bullet}\ot_{\mathcal
{R}}M)\cong\Ho_n(Q_{\bullet}\ot_{\mathcal {R}}M).$$
\end{lemma}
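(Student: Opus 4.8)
The plan is to reduce the statement to the assertion that a bounded below acyclic complex of flat right $\mathcal{R}$-modules stays acyclic after applying $-\ot_{\mathcal{R}}M$, and then to feed this into the mapping cone of a quasi-isomorphism $f:P_{\bullet}\ra Q_{\bullet}$. Concretely, I would first form the mapping cone $\mathrm{Cone}(f)$, which is again a bounded below complex of flat right $\mathcal{R}$-modules (each term being a direct sum of a term of $P_{\bullet}$ and a term of $Q_{\bullet}$), and which is acyclic precisely because $f$ is a quasi-isomorphism. Since $-\ot_{\mathcal{R}}M$ commutes with the cone construction, one has $\mathrm{Cone}(f)\ot_{\mathcal{R}}M\cong\mathrm{Cone}(f\ot \id_M)$; so once the cone is known to remain acyclic after tensoring, it follows that $f\ot \id_M$ is a quasi-isomorphism, which yields the desired isomorphism $\Ho_n(P_{\bullet}\ot_{\mathcal{R}}M)\cong\Ho_n(Q_{\bullet}\ot_{\mathcal{R}}M)$.

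The heart of the argument is therefore the sublemma on acyclic complexes. Let $C_{\bullet}$ be acyclic and bounded below, say $C_n=0$ for $n<0$, with each $C_n$ flat. Writing $Z_n=\ker(d_n)=\im(d_{n+1})$, acyclicity splits $C_{\bullet}$ into short exact sequences $0\ra Z_n\ra C_n\ra Z_{n-1}\ra 0$. I would then show by induction on $n$ that every $Z_n$ is flat: the base case $Z_{-1}=0$ is trivial, and in a short exact sequence $0\ra Z_n\ra C_n\ra Z_{n-1}\ra 0$ with $C_n$ and $Z_{n-1}$ flat, the submodule $Z_n$ is flat, as the long exact $\Tor$ sequence gives $\Tor^{\mathcal{R}}_i(Z_n,N)\cong\Tor^{\mathcal{R}}_{i+1}(Z_{n-1},N)=0$ for all $i\geq 1$ and every left module $N$. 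Because each quotient $Z_{n-1}$ is flat, tensoring these short exact sequences with $M$ keeps them exact, so that $0\ra Z_n\ot_{\mathcal{R}}M\ra C_n\ot_{\mathcal{R}}M\ra Z_{n-1}\ot_{\mathcal{R}}M\ra 0$ is exact; splicing them back together shows that $C_{\bullet}\ot_{\mathcal{R}}M$ is acyclic.

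The step I expect to be the main obstacle is exactly this inductive flatness of the cycle modules $Z_n$, since it is the one place where the hypothesis of \emph{bounded below} is indispensable: the induction must start at the bottom of the complex, and without such a bound there is no base case and the cycle modules need not inherit flatness. Everything else---commuting $-\ot_{\mathcal{R}}M$ with the cone, the long exact $\Tor$ sequence, and the final splicing---is formal. Finally, if ``quasi-isomorphic'' is to be read as being connected by a zigzag of quasi-isomorphisms through bounded below complexes of flat modules, I would simply apply the single-map case to each arrow of the zigzag and compose the resulting isomorphisms; equivalently, one observes that on bounded below complexes of flats the functor $-\ot_{\mathcal{R}}M$ computes the derived tensor product, which by construction carries quasi-isomorphisms to quasi-isomorphisms.
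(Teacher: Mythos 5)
Your proof is correct, but it takes a genuinely different route from the paper. The paper's own argument is spectral-sequence based: it first identifies $\Ho_n(P_{\bullet}\ot_{\mathcal R}M)$ and $\Ho_n(Q_{\bullet}\ot_{\mathcal R}M)$ with the hypertor groups $\mathbb{T}\mathrm{or}^{\mathcal R}_n(P_{\bullet},M)$ and $\mathbb{T}\mathrm{or}^{\mathcal R}_n(Q_{\bullet},M)$ (valid precisely because the complexes are bounded below with flat terms, citing Weibel), then invokes the hyperhomology spectral sequence $E^2_{p,q}=\Tor^{\mathcal R}_p(\Ho_q(-),M)\Rightarrow\mathbb{T}\mathrm{or}^{\mathcal R}_{p+q}(-,M)$, observes that the quasi-isomorphism identifies the two $E^2$-pages, and concludes via the mapping lemma for $E^{\infty}$. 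Your argument instead runs through the mapping cone of $f$, the inductive flatness of the cycle modules $Z_n$ of a bounded-below acyclic complex of flats, and the splicing of the tensored short exact sequences; this is more elementary and entirely self-contained, it makes explicit exactly where bounded-belowness and flatness enter (the base of your induction), and it yields the slightly stronger, functorial conclusion that $f\ot \id_M$ is itself a quasi-isomorphism. That last point is a genuine advantage: the paper's wording compares the two $E^2$-terms only through the abstract isomorphisms $\Ho_q(P_{\bullet})\cong\Ho_q(Q_{\bullet})$, whereas the mapping lemma really requires the morphism of spectral sequences induced by the map $f$, a gap your cone argument sidesteps altogether. What the paper's route buys in exchange is economy and coherence with the rest of Section 3: the same hypertor spectral sequence is reused immediately afterwards (in the corollary on $\Ho_n(\mathfrak{C}[[u]]\ot_{k[u]}W)$), so the authors get both results from one piece of machinery. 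Your closing remarks on the zigzag interpretation of ``quasi-isomorphic'' address an ambiguity that the paper leaves implicit as well, and your handling of it is sound.
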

\begin{proof} We know from \cite{W} that, for bounded below complexes of flat
right $\mathcal{R}$-modules $P_{\bullet}$ and $Q_{\bullet}$,
$$\Ho_n(P_{\bullet}\ot_{\mathcal
{R}}M)\cong\mathbb{T}\mathrm{or}_n^{\mathcal
{R}}(P_{\bullet},M)\text{ and }\Ho_n(Q_{\bullet}\ot_{\mathcal
{R}}M)\cong\mathbb{T}\mathrm{or}_n^{\mathcal {R}}(Q_{\bullet},M)$$
for each $n$, where $\mathbb{T}\mathrm{or}$ is the hypertor. And we
have spectral sequences converging to them, that is,
\begin{gather*}
E^2_{p,q}(P)=\Tor^{\mathcal {R}}_p(\Ho_q(P_{\bullet}),M) \Rightarrow
\mathbb{T}\mathrm{or}^{\mathcal {R}}_{p+q}(P_{\bullet},M),\\
E^2_{p,q}(Q)=\Tor^{\mathcal {R}}_p(\Ho_q(Q_{\bullet}),M) \Rightarrow
\mathbb{T}\mathrm{or}^{\mathcal {R}}_{p+q}(Q_{\bullet},M).
\end{gather*}
$E^2_{p,q}(P)\cong E^2_{p,q}(Q)$ for all $p,q$, as
$\Ho_q(P_{\bullet})\cong \Ho_q(Q_{\bullet})$. It yields  that
$\mathbb{T}\mathrm{or}^{\mathcal {R}}_{n}(P_{\bullet},M)\cong
\mathbb{T}\mathrm{or}^{\mathcal {R}}_{n}(Q_{\bullet},M)$ by using
the mapping lemma for $E^{\infty}$ (see \cite{W}).
\end{proof}

The above two lemmas still hold in the graded module category.
\begin{coro}[\cite{GJ}]\label{3.4}
If there exists $f:C\ra C'$ is a quasi-isomorphic of mixed
complexes, then for any graded $k[u]$-module $W$, we have an
isomorphism of cyclic homology groups $$\Hc_{\bullet}(C;W)\cong
\Hc_{\bullet}(C';W).$$
\end{coro}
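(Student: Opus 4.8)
The plan is to read off both sides of the claimed isomorphism as the homology of an explicit complex of $k[u]$-modules and then to invoke the preceding lemma in its graded form (the one guaranteed by the remark just above the corollary). By the definition of the cyclic homology of a mixed complex with coefficients, $\Hc_\bullet(C;W)$ is the homology of the complex $C_\bullet\boxtimes W=C_\bullet[[u]]\ot_{k[u]}W$ with differential $(\mathrm{b}+u\mathrm{B})\ot_{k[u]}\id_W$, and likewise $\Hc_\bullet(C';W)$ is the homology of $C'_\bullet[[u]]\ot_{k[u]}W$. Thus, setting $\mathcal{R}=k[u]$, $M=W$, $P_\bullet=(C_\bullet[[u]],\mathrm{b}+u\mathrm{B})$ and $Q_\bullet=(C'_\bullet[[u]],\mathrm{b}+u\mathrm{B})$, the assertion $\Hc_\bullet(C;W)\cong\Hc_\bullet(C';W)$ is exactly the conclusion $\Ho_n(P_\bullet\ot_{\mathcal{R}}M)\cong\Ho_n(Q_\bullet\ot_{\mathcal{R}}M)$ of the preceding lemma. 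So it suffices to verify that lemma's three hypotheses for $P_\bullet$ and $Q_\bullet$.

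I would check the hypotheses in turn. For flatness, each term $C_n[[u]]=\sum_{i\geq 0}C_{n+2i}u^i$ is of the form $V[[u]]$ for a $k$-vector space $V$, hence is a flat $k[u]$-module by Lemma \ref{3.02} in its graded version; the same holds for every $C'_n[[u]]$. For boundedness below, the mixed complexes $C_\bullet$ and $C'_\bullet$ are non-negatively graded and $\deg u=-2$, so $P_\bullet$ and $Q_\bullet$ are concentrated in non-negative total degree and are therefore bounded below. Finally, a morphism of mixed complexes is by definition a family $f=\sum_{k\geq 0}u^kf_k$ commuting with $\mathrm{b}+u\mathrm{B}$, that is, precisely a chain map $\hat{f}\colon P_\bullet\ra Q_\bullet$; since $f$ is assumed to be a quasi-isomorphism of mixed complexes, $\hat{f}$ is a quasi-isomorphism of $P_\bullet$ with $Q_\bullet$. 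With all three hypotheses in place, the graded form of the preceding lemma applies verbatim: the two hypertor spectral sequences have isomorphic $E^2$-pages because $\Ho_q(P_\bullet)\cong\Ho_q(Q_\bullet)$ via $\hat{f}$, and the mapping lemma for $E^\infty$ delivers the desired isomorphism after tensoring with $W$.

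I expect the delicate point to be the flatness step together with the exact meaning of the quasi-isomorphism hypothesis, which is precisely the subtlety flagged in the acknowledgement to Professor Getzler. If one merely tensors quasi-isomorphic complexes with $W$ without knowing the terms are flat $k[u]$-modules, the isomorphism on homology can fail; Lemma \ref{3.02} is exactly what rescues the argument, since it lets $C_\bullet[[u]]$ serve as a complex of flat modules even though the individual $C_n$ are only $k$-vector spaces. A secondary point of care is that the hypothesis \emph{quasi-isomorphism of mixed complexes} must be read as saying that $\hat{f}$ induces an isomorphism on $(\mathrm{b}+u\mathrm{B})$-homology; were it instead to mean that $f_0$ is a quasi-isomorphism for the $\mathrm{b}$-differential alone, one would first have to promote this to a quasi-isomorphism of $P_\bullet$ and $Q_\bullet$ by filtering by powers of $u$ and comparing the resulting spectral sequences, using that the $u$-adic filtration is complete and bounded below in each total degree. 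Once these points are settled, the remainder is a routine bookkeeping of the hypotheses of the two lemmas.
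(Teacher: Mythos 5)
Your proof is correct and is essentially the paper's own argument: the paper states this corollary without a written proof precisely because it is the application, in the graded category, of the two preceding lemmas (flatness of $V[[u]]$ over $k[u]$, i.e.\ Lemma \ref{3.02}, and the hypertor comparison for bounded-below complexes of flat modules) with $\mathcal{R}=k[u]$, $M=W$, $P_\bullet=(C_\bullet[[u]],\mathrm{b}+u\mathrm{B})$ and $Q_\bullet=(C'_\bullet[[u]],\mathrm{b}+u\mathrm{B})$, which is exactly what you spell out. Your extra care about the precise meaning of ``quasi-isomorphism of mixed complexes'' and the $u$-adic filtration argument goes slightly beyond what the paper records, but it supplements rather than departs from its approach.
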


Using the generalized Eilenberg-Zilber theorem for paracyclic
modules, we have

\begin{coro}
\label{3.2}  Let $A\#_{_R}B$ be a strong smash product algebra,
$A\natural B$ be the cylindrical module defined in \eqref{7} and
\eqref{8}.
 Then $$\Hc_*(A\#_{_R}B;W)\cong \Hc_*(\Delta(A\natural B);W)\cong\Hc_*(\Tot(A\natural B);W).$$
\end{coro}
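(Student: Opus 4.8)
The plan is to obtain both isomorphisms by a single mechanism: produce a quasi-isomorphism of mixed complexes in each case and then invoke Corollary~\ref{3.4}. Recall that the notation $\Hc_*(A\#_{_R}B;W)$ abbreviates $\Hc_*(C_\bullet(A\#_{_R}B);W)$, so all three quantities are the cyclic homologies, with coefficients in $W$, of the mixed complexes attached to $C_\bullet(A\#_{_R}B)$, $\Delta_\bullet(A\natural B)$, and $\Tot_\bullet(A\natural B)$ respectively.

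For the left-hand isomorphism $\Hc_*(A\#_{_R}B;W)\cong\Hc_*(\Delta(A\natural B);W)$, I would start from Proposition~\ref{2.6}, which gives mutually inverse isomorphisms $\Phi$ and $\Psi$ of cyclic modules between $C_\bullet(A\#_{_R}B)$ and $\Delta_\bullet(A\natural B)$. Since the operators $\mathrm{b}$ and $\mathrm{B}$ of \eqref{6} are built functorially from the face, degeneracy, and cyclic maps, an isomorphism of cyclic modules induces an isomorphism of the associated mixed complexes; in particular it is a quasi-isomorphism. Applying Corollary~\ref{3.4} to this quasi-isomorphism yields the claimed isomorphism of cyclic homologies.

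For the right-hand isomorphism $\Hc_*(\Delta(A\natural B);W)\cong\Hc_*(\Tot(A\natural B);W)$, I would invoke the generalized Eilenberg--Zilber theorem stated just above, which furnishes a quasi-isomorphism of mixed complexes between $\Tot_\bullet(A\natural B)$ and $C_\bullet(A\#_{_R}B)$. Composing it with the mixed-complex isomorphism from the previous paragraph gives a quasi-isomorphism between $\Tot_\bullet(A\natural B)$ and $\Delta_\bullet(A\natural B)$, and a second application of Corollary~\ref{3.4} closes the chain. By transitivity, all three mixed complexes are quasi-isomorphic, so the two displayed isomorphisms hold simultaneously.

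The only step requiring genuine care---everything else being bookkeeping---is the legitimacy of each appeal to Corollary~\ref{3.4}, whose proof silently uses that the relevant complexes consist of flat $k[u]$-modules. Here the plan is to note that, because $k$ is a field, every term of each mixed complex in sight is a $k$-vector space, so by Lemma~\ref{3.02} the corresponding formal power series modules $V[[u]]$ are torsion-free and hence flat over $k[u]$. This is precisely the flatness hypothesis that makes the hypertor spectral-sequence comparison underlying Corollary~\ref{3.4} valid, so both applications above are justified and the proof is complete.
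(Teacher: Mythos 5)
Your proposal is correct and follows essentially the same route as the paper, which states this corollary as an immediate consequence of Proposition~\ref{2.6}, the generalized Eilenberg--Zilber theorem (in the form of the preceding theorem), and Corollary~\ref{3.4}, with the flatness needed for Corollary~\ref{3.4} supplied by Lemma~\ref{3.02}. Your write-up merely makes explicit the chain of quasi-isomorphisms and the flatness check that the paper leaves implicit.
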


The following corollary will be used in the next section.
\begin{coro}\label{coro 3.5}
Let $(\mathfrak{C},\mathfrak{d})$ be a complex of $k$-modules and
$W$ a graded $k[u]$-module. Then for each $n$,
$$\Ho_n(\mathfrak{C}[[u]]\ot_{k[u]}W)=\Ho_n(\mathfrak{C})[[u]]\ot_{k[u]}
W.$$
\end{coro}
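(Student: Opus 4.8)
The plan is to use that $k$ is a field, so that $(\mathfrak{C},\mathfrak{d})$ is chain homotopy equivalent to its homology $\Ho_\bullet(\mathfrak{C})$ regarded as a complex with zero differential, and then to push this equivalence through the functor $F := (-)[[u]]\ot_{k[u]}W$. Working with an honest homotopy equivalence rather than a mere quasi-isomorphism is the crucial choice: chain homotopy equivalences are preserved by \emph{every} additive functor, hence by $\ot_{k[u]}W$ for an \emph{arbitrary}, possibly non-flat, graded module $W$, and this simultaneously frees me from any boundedness assumption. The latter matters because the $u$-periodicity makes $\mathfrak{C}[[u]]$ unbounded below even when $\mathfrak{C}$ is concentrated in non-negative degrees, so the bounded-below flatness lemmas proved above would not apply directly.

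First I would build the equivalence $\mathfrak{C}\simeq\Ho_\bullet(\mathfrak{C})$, i.e. chain maps $f\colon\mathfrak{C}\to\Ho_\bullet(\mathfrak{C})$ and $g\colon\Ho_\bullet(\mathfrak{C})\to\mathfrak{C}$ together with a degree $+1$ homotopy $h$ satisfying $fg=\id$ and $gf-\id=\mathfrak{d}h+h\mathfrak{d}$. Over a field each of the short exact sequences $0\to B_n\to Z_n\to\Ho_n(\mathfrak{C})\to0$ and $0\to Z_n\to\mathfrak{C}_n\to B_{n-1}\to0$ of cycles and boundaries splits, and choosing such splittings in every degree produces $f$, $g$ and $h$ explicitly. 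The construction is carried out degree by degree, so no finiteness or boundedness is ever invoked.

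Next I would extend $f,g,h$ coefficient-wise, i.e. $u$-linearly. Because the differential on $\mathfrak{C}[[u]]$ is $\mathfrak{d}$ applied in each power of $u$ and $u$ carries the fixed degree $-2$, the extended maps are $k[u]$-linear and obey the same identities, yielding a $k[u]$-linear chain homotopy equivalence $\mathfrak{C}[[u]]\simeq\Ho_\bullet(\mathfrak{C})[[u]]$, the target carrying the zero differential. Applying the additive functor $\ot_{k[u]}W$ preserves this homotopy equivalence, and homology is a homotopy invariant, so $\Ho_n(\mathfrak{C}[[u]]\ot_{k[u]}W)\cong\Ho_n(\Ho_\bullet(\mathfrak{C})[[u]]\ot_{k[u]}W)$. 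Since the latter complex has zero differential, its $n$-th homology is simply its degree-$n$ component, which under the convention $V_n[[u]]=\sum_{i\geq0}V_{n+2i}u^i$ is precisely $\Ho_n(\mathfrak{C})[[u]]\ot_{k[u]}W$, giving the asserted identity. The one place demanding care — and the main obstacle — is checking that the homotopy data genuinely survives these two passages: that it stays $k[u]$-linear after the coefficient-wise extension, and that it remains a chain homotopy after tensoring with a possibly non-flat $W$. Once the relation is secured as a homotopy equivalence (and not just a quasi-isomorphism), everything that follows is formal.
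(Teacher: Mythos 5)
Your proof is correct, and it takes a genuinely different route from the paper's. The paper works homologically, inside the hypertor framework of Section~3: flatness of $\mathfrak{C}[[u]]$ over $k[u]$ (Lemma \ref{3.02}) gives $\Ho_n(\mathfrak{C}[[u]]\ot_{k[u]}W)=\mathbb{T}\mathrm{or}_n^{k[u]}(\mathfrak{C}[[u]],W)$; then, because the differential does not involve $u$, the hypertor spectral sequence has $E^2$-term $\Tor_p^{k[u]}\big(\Ho_q(\mathfrak{C})[[u]],W\big)$, which collapses since $\Ho_q(\mathfrak{C})[[u]]$ is again flat. You instead use the field hypothesis at the chain level: the complex deformation-retracts onto its homology, the retraction data extends $u$-linearly precisely because the differential of $\mathfrak{C}[[u]]$ acts coefficientwise in $u$ (the same observation the paper makes), and a chain homotopy equivalence survives the additive functor $\ot_{k[u]}W$ for an arbitrary graded $W$. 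Your route is more elementary --- no hypertor, no spectral sequence, no flatness of anything --- and, as you rightly note, it sidesteps a genuine subtlety in the paper's version: the hypertor comparison lemma preceding Corollary \ref{3.4} is stated for bounded-below complexes of flat modules, whereas $\mathfrak{C}[[u]]$ is typically unbounded below (as $\deg u=-2$), so the paper's first equality strictly requires a supplementary argument (for instance, that $k[u]$ is hereditary, so cycles and boundaries are again flat and one can compute degree by degree, or a K-flatness remark); a homotopy equivalence needs none of this. What the paper's approach buys in exchange is independence from splitting: it uses only flatness, never that complexes of $k$-modules split into their homology, so it is the form of the argument that would persist over a base where such splittings fail, and it keeps Section~3 uniform, with Corollary \ref{3.4} and this corollary following from the same pair of lemmas. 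Since $k$ is a field throughout the paper, nothing is lost by your argument, and it establishes exactly the stated identity (as a natural isomorphism, which is all that is used later).
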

\begin{proof}Since $\mathfrak{C}[[u]]$ is a complex of flat
$k[u]$-modules,
$$\Ho_n(\mathfrak{C}[[u]]\ot_{k[u]}W)=\mathbb{T}\mathrm{or}^{k[u]}_n(\mathfrak{C}[[u]],W).$$
Note that the differential of the complex $\mathfrak{C}[[u]]$ does
not depend on $u$. We have a spectral sequence converging to the
hypertor whose $E^2$-term is
$$
\Tor_p^{k[u]}(\Ho_q(\mathfrak{C})[[u]],W).
$$
Because $\Ho_q(\mathfrak{C})[[u]]$ is also a flat $k[u]$-module, the
spectral sequence collapses. We get
$$
\Ho_n(\mathfrak{C}[[u]]\ot_{k[u]}W)=\mathbb{T}\mathrm{or}^{k[u]}_n(\mathfrak{C}[[u]],W)=\Ho_n(\mathfrak{C})[[u]]\ot_{k[u]}W.
$$

This completes the proof.
\end{proof}

\section{\label{44}Cyclic homology of a strong smash product algebra}
We can also construct a spectral sequence to calculate the cyclic
homology of a strong smash product algebra $A\#_{_{R}}B$. This is
the same as calculating the cyclic homology of $\Tot(A\natural B)$.
The first column of the cylindrical module $A\natural B$ plays an
important role. Denote by $C_{\bullet}({}_A^{~\natural}B)$ this
paracyclic module $A\natural B(\bullet,0)$.

\begin{lemma}\label{4.1}For each $n\in \mathbb{N}$, $C_{n}({}_A^{~\natural}B)$ is an $A$-bimodule via
the left $A$-module action
$$a.(b_0,\ldots,b_n\,|\,a_0)=(id^{\ot (n+1)}\ot m_A)\big(\Gamma_n(a,b_0,\ldots,b_n)\,|\,a_0\big)$$
and the right $A$-module action
$$(b_0,\ldots,b_n\,|\,a_0).a=(b_0,\ldots,b_n\,|\,a_0a)$$
where $\Gamma_n $ is defined in Section \ref{222}, and $a_0,a\in
A,b_j\in B$.
\end{lemma}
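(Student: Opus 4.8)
The plan is to verify the three families of axioms defining an $A$-bimodule: that the right action is a unital associative right action, that the left action is a unital associative left action, and that the two actions commute. The right action is immediate. Since $(b_0,\ldots,b_n\,|\,a_0).a=(b_0,\ldots,b_n\,|\,a_0a)$ is nothing but right multiplication in $A$ on the last tensor slot, both $((b_0,\ldots,b_n\,|\,a_0).a).a'=(b_0,\ldots,b_n\,|\,a_0).(aa')$ and $(b_0,\ldots,b_n\,|\,a_0).1_A=(b_0,\ldots,b_n\,|\,a_0)$ follow at once from associativity and unitality of the algebra $A$. The unit axiom for the left action uses only normality: because $R$ is normal we have $R^{-1}(1_A\ot b)=b\ot 1_A$, so $\Gamma_n$ transports $1_A$ through the strand $b_0\ot\cdots\ot b_n$ unchanged, giving $\Gamma_n(1_A,b_0,\ldots,b_n)=(b_0,\ldots,b_n,1_A)$; applying $\id^{\ot(n+1)}\ot m_A$ then returns $(b_0,\ldots,b_n\,|\,a_0)$.

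The substantial point is associativity of the left action, $(aa').m=a.(a'.m)$. The approach is to isolate the following multiplicativity identity for $\Gamma_n$, regarded as maps $A\ot A\ot B^{\ot(n+1)}\ra B^{\ot(n+1)}\ot A$:
$$\Gamma_n\circ(m_A\ot\id^{\ot(n+1)})=(\id^{\ot(n+1)}\ot m_A)\circ(\Gamma_n\ot\id_A)\circ(\id_A\ot\Gamma_n).$$
Granting this identity, unwinding the definitions of both sides of $(aa').m=a.(a'.m)$ reduces the associativity of the action to exactly this identity together with associativity of $m_A$, since in either order of application $\Gamma_n$ acts only on the $B$-strands and the transported $A$-factor is multiplied onto the left of $a_0$.

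I would prove the displayed identity by induction on $n$, and I expect this to be the main obstacle, as it is the only place where the braiding axioms genuinely enter. The base case $n=0$ reads $R^{-1}(m_A\ot\id)=(\id\ot m_A)R^{-1}_{12}R^{-1}_{23}$, which is precisely the quasitriangularity of $R^{-1}$ furnished by Proposition \ref{1.2}. For the inductive step I would use the recursion $\Gamma_n=R^{-1}_{n+1,n+2}\circ(\Gamma_{n-1}\ot\id_B)$, which drags the $A$-strand through the first $n$ copies of $B$ and then past the last one, and then feed in the inductive hypothesis for $\Gamma_{n-1}$ followed by one application of the base quasitriangularity relation on the final crossing. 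Diagrammatically this is the assertion that two $A$-strands may be merged either before or after being dragged underneath the band of $B$-strands, the local move legitimizing this being exactly the $R^{-1}$-version of the quasitriangular picture displayed in Section \ref{111}; the only delicate point is the bookkeeping of the tensor positions inside the composite $\Gamma_n$.

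Finally, the compatibility $(a.m).a'=a.(m.a')$ is essentially formal. Because $\Gamma_n(a,b_0,\ldots,b_n)$ does not involve the slot $a_0$, writing $\Gamma_n(a,b_0,\ldots,b_n)=\sum\mathbf{b}'\ot a''$ one finds that the left action multiplies the transported factor $a''$ onto the left of $a_0$ while the right action multiplies $a'$ onto the right, so both $(a.m).a'$ and $a.(m.a')$ equal $\sum(\mathbf{b}'\,|\,a''a_0a')$ by associativity of $m_A$. This completes the verification that $C_n({}_A^{~\natural}B)$ is an $A$-bimodule.
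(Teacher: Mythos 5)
Your proof is correct and takes essentially the same route as the paper: the paper's own proof is a one-line appeal to Proposition 1.2 (that $R^{-1}$ is quasitriangular and normal) to justify the left action, with the right action and compatibility declared trivial. Your write-up simply makes that argument explicit, isolating the multiplicativity identity $\Gamma_n\circ(m_A\ot\id^{\ot(n+1)})=(\id^{\ot(n+1)}\ot m_A)\circ(\Gamma_n\ot\id_A)\circ(\id_A\ot\Gamma_n)$ and proving it by induction from the quasitriangularity of $R^{-1}$, which is exactly the content the paper leaves implicit.
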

\begin{proof} The right action is trivial. By Proposition \ref{1.2}, $R^{-1}$ is also
quasitriangular and normal, then the left $A$-module action is
well-defined. And both actions are compatible.
\end{proof}

For each $p\in \mathbb{N}$, we can define a \textit{Hochschild
complex} $(C_{\bullet}(A,C_{p}({}_A^{~\natural}B)),d)$, whose
homology is \textit{the Hochschild homology of the algebra} $A$ with
coefficients in $C_{p}({}_A^{~\natural}B)$ (see \cite{L}). The
Hochschild complex is defined explicitly as follows: for any $q\in
\mathbb{N}$,
$$C_{q}(A,C_{p}({}_A^{~\natural}B))=C_{p}({}_A^{~\natural}B)\ot A^{\ot q}=B^{\ot (p+1)}\ot A\ot A^{\ot q},$$
the differential  $d: C_{q}(A,C_{p}({}_A^{~\natural}B))\ra
C_{q-1}(A,C_{p}({}_A^{~\natural}B))$ is
\begin{equation}\label{9}\begin{split}
d(b_0,\ldots,b_{p}\,|\,a_0\,|\, a_1,\ldots,a_q)
&=\big((b_0,\ldots,b_{p}\,|\,a_0).a_1\,|\,a_2,\ldots,a_q\big)\\
&\ +\sum_{i=1}^{q-1}(-1)^{i}(b_0,\ldots,b_{p}\,|\,a_0\,|\, a_1,\ldots,a_ia_{i+1},\ldots,a_q)\\
 &\ +(-1)^{q}\big(a_q.(b_0,\ldots,b_{p}\,|\,a_0)\,|\,
 a_1,\ldots,a_{q-1}\big).\end{split}
\end{equation}
Denote this Hochschild homology by
$\Ho_{\bullet}(A,C_{p}({}_A^{~\natural}B))$.

\begin{coro}
$C_{\bullet}(A,C_{\bullet}({}_A^{~\natural}B))$ is a cylindrical
module with the same operators defined for $A\natural B$ in
\eqref{7} and \eqref{8}.
\end{coro}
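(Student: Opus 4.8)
The plan is to recognize that the bigraded object $C_{q}(A,C_{p}({}_A^{~\natural}B))$ is, module by module, nothing but $A\natural B(p,q)$, and that the operators named in \eqref{7} and \eqref{8} restrict to exactly the simplicial structure already built into the Hochschild complexes of \eqref{9}. Once this identification is in place the cylindrical axioms need not be reverified; they follow verbatim from the proposition asserting that $(A\natural B,d_i,s_i,t,\bar{d}_j,\bar{s}_j,\bar{t})$ is a cylindrical module. First I would check the underlying modules: since $C_{p}({}_A^{~\natural}B)=A\natural B(p,0)=B^{\ot(p+1)}\ot A$, one has
$$C_{q}(A,C_{p}({}_A^{~\natural}B))=C_{p}({}_A^{~\natural}B)\ot A^{\ot q}=B^{\ot(p+1)}\ot A\ot A^{\ot q}=B^{\ot(p+1)}\ot A^{\ot(q+1)}=A\natural B(p,q),$$
so that the Hochschild chain $(b_0,\ldots,b_p\,|\,a_0\,|\,a_1,\ldots,a_q)$ is literally the element $(b_0,\ldots,b_p\,|\,a_0,a_1,\ldots,a_q)$ of $A\natural B(p,q)$.

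The substantive step is then to match the barred face maps with the Hochschild differential \eqref{9}. For the interior indices $1\le j\le q-1$ this is immediate, since both $\bar{d}_j^{p,q}$ and the middle summand of \eqref{9} multiply $a_j$ and $a_{j+1}$. The boundary summands require checking. The $j=0$ summand of \eqref{9} is the right action $(b_0,\ldots,b_p\,|\,a_0).a_1=(b_0,\ldots,b_p\,|\,a_0a_1)$, which is exactly $\bar{d}_0^{p,q}$. The last summand is the left action of Lemma \ref{4.1}, namely $(id^{\ot(p+1)}\ot m_A)\big(\Gamma_p(a_q,b_0,\ldots,b_p)\,|\,a_0\big)$, and I would verify that this coincides with $\bar{d}_q^{p,q}=\bar{d}_0^{p,q}\bar{t}_{p,q}$: unwinding $\bar{t}_{p,q}$ from \eqref{8}, applying $\Gamma_p$ to $(a_q,b_0,\ldots,b_p)$ produces new $B$-entries together with a single $A$-entry which is carried to the front, whereupon $\bar{d}_0^{p,q}$ multiplies that entry into $a_0$; the result is precisely the left action above. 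This is the one place where the definition of $\Gamma_p$ as a composite of $R^{-1}$'s is actually invoked, and it is the only computation demanding genuine care. The barred degeneracies $\bar{s}_j^{p,q}$ likewise reduce to insertion of $1_A$, the degeneracies of the Hochschild complex, while the unbarred operators $d_i,s_i,t$ of \eqref{7} are by construction the paracyclic structure of $C_{\bullet}({}_A^{~\natural}B)$ in the $p$-variable.

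With every operator thus identified with the corresponding operator of $A\natural B$, the conclusion is formal. That the two families are paracyclic, that the barred and unbarred operators commute, and that the cyclicity relation $t_{p,q}^{p+1}\bar{t}_{p,q}^{\,q+1}=id_{p,q}$ holds are all already established for $A\natural B$ by the cited proposition, and they transport across the equality of operators without change. Hence $C_{\bullet}(A,C_{\bullet}({}_A^{~\natural}B))$ is a cylindrical module with the operators of \eqref{7} and \eqref{8}, as claimed. I expect the sole obstacle to be the boundary-face verification of the previous paragraph, matching the Hochschild last face defined through the $\Gamma_p$-twisted left $A$-action against $\bar{d}_0^{p,q}\bar{t}_{p,q}$; everything else is bookkeeping inherited from the already-proven cylindrical structure.
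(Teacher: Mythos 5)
Your proposal is correct and follows essentially the same route as the paper: identify $C_q(A,C_p({}_A^{~\natural}B))$ with $A\natural B(p,q)$ as modules, observe that the Hochschild differential \eqref{9} is exactly $\bar{\mathrm{b}}=\sum_j(-1)^j\bar{d}_j^{p,q}$ (the boundary terms matching the right action with $\bar{d}_0^{p,q}$ and the $\Gamma_p$-twisted left action with $\bar{d}_q^{p,q}=\bar{d}_0^{p,q}\bar{t}_{p,q}$), and then inherit the cylindrical axioms from the already-proven proposition for $A\natural B$. The paper states this identification in one line without spelling out the boundary-face check, so your more explicit verification is simply a fuller account of the same argument.
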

Indeed, for each $p,q\in \mathbb{N}$,
$$C_{q}(A,C_{p}({}_A^{~\natural}B))=C_{p}({}_A^{~\natural}B)\ot A^{\ot q}=
B^{\ot (p+1)}\ot A\ot A^{\ot q}=A\natural B(p,q).$$ Note that
$\mathrm{\bar{\,b}}$ of $A\natural B$ is exactly the differential
$d$ defined in \eqref{9}. \vspace{1.5em}

 Define $C^A_{\bullet}({}_A^{~\natural}B)$ as  the
\textit{co-invariant space} of $C_{\bullet}({}_A^{~\natural}B)$
under the left and right actions of $A$ constructed in Lemma
\ref{4.1}, i.e.,
$$C^A_{\bullet}({}_A^{~\natural}B) =C_{\bullet}({}_A^{~\natural}B)/
\mathrm{span}\{a.x-x.a\mid a\in A,x\in
C_{\bullet}({}_A^{~\natural}B)\}.$$ And we define the following
operators on $C^A_{\bullet}({}_A^{~\natural}B)$:
 \begin{equation}\label{10}\begin{split}
\tau_n(b_0,\ldots,b_n\,|\,a)&=\mathrm{f}^{n+1,1}(b_0,\ldots,b_{n-1},R(b_n\ot a)),\\
\partial_i(b_0,\ldots,b_n\,|\,a)&=(b_0,\ldots,b_ib_{i+1},\ldots,b_n\,|\,a),\quad \text{for }0\leq i<n,\\
\partial_n(b_0,\ldots,b_n\,|\,a)&=\partial_0\tau_n(b_0,\ldots,b_n\,|\,a),\\
\sigma_j(b_0,\ldots,b_n\,|\,a)&=(b_0,\ldots,b_j,1,\ldots,b_n\,|\,a),\quad
\text{for }0\leq j\leq n.
 \end{split}\end{equation}

Use the following notations as in \cite{CBMZ}, for $R$,$$R(b\ot
a)=a^R\ot b^R ,\quad R_{23}R_{12}(b\ot a_1\ot a_2)=a_1^{R_1}\ot
a_2^{R_2 } \ot b^{{R_1}^{\scriptstyle R_2}},\text{ etc,}$$ and for
$R^{-1}$,
$$R^{-1}(a\ot b)=b^r\ot a^r ,\quad R^{-1}_{23}R^{-1}_{12}(a\ot b_1\ot b_2)=b_1^{r_1}\ot
b_2^{r_2 } \ot a^{{r_1}^{\scriptstyle r_2}},\text{ etc,}$$
 where
$a,a_1,a_2\in A$ and $b,b_1,b_2\in B$. Then one can check that these
operators in \eqref{10} are well defined on the co-invariant space.
For example,
 \begin{align*}
 \tau(a.(b_0,\ldots,b_n\,|\,a_0))
&=\tau(b_0^{r_1},b_1^{r_2},\ldots,b_n^{r_{n+1}}
 \,|\,a^{{r_1}^{{\adots}^{\scriptstyle r_{n+1}}
}}a_0)\\
&=\big(b_{n}^{{r_{n+1}}^{\scriptstyle
R}},b_0^{r_1},b_1^{r_2},\ldots,b_{n-1}^{r_{n}}
 \,|\,(a^{{r_1}^{{\adots}^{\scriptstyle r_{n+1}}
}}a_0)^R\big)\\
&=\big(b_{n}^{{r_{n+1}}^{\scriptstyle {R_1}^{\scriptstyle
R_2}}},b_0^{r_1},b_1^{r_2},\ldots,b_{n-1}^{r_{n}}
 \,|\,a^{{r_1}^{{\adots}^{\scriptstyle {r_{n+1}}^{\scriptstyle R_1}}
}}a_0^{R_2}\big)\\
&=\big(b_{n}^{{\scriptstyle
R}},b_0^{r_1},b_1^{r_2},\ldots,b_{n-1}^{r_{n}}
 \,|\,a^{{r_1}^{{\adots}^{\scriptstyle r_{n}}
}}a_0^{R}\big)\\
&=a^{R_2}.\,(b^{{R_1}^{\scriptstyle R_2}},b_0,\ldots,b_{n-1}\,|\,a_0^{R_1})\quad,\\\\
 \tau((b_0,\ldots,b_n\,|\,a_0).a)&=\tau(b_0,\ldots,b_n\,|\,a_0a)\\
&=(b_n^R,b_0,\ldots,b_{n-1}\,|\,(a_0a)^R)\\
&=(b_n^{{R_1}^{\scriptstyle
R_2}},b_0,\ldots,b_{n-1}\,|\,{a_0}^{R_1}a^{R_2})\\
&=(b_n^{{R_1}^{\scriptstyle
R_2}},b_0,\ldots,b_{n-1}\,|\,{a_0}^{R_1}).\,a^{R_2}\quad.\\
\end{align*}

\begin{prop}
$C_{\bullet}^A({}_A^{~\natural}B)$ is a cyclic module with operators
defined in \eqref{10}.
\end{prop}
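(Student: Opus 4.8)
The operators in \eqref{10} are nothing but the unbarred paracyclic operators $t_{\bullet,0},\,d_i^{\bullet,0},\,s_i^{\bullet,0}$ of the cylindrical module $A\natural B$ specialized to $q=0$ (note $\Theta_0=R$, so $t_{p,0}=\tau_p$, and $\partial_n=\partial_0\tau_n$ is exactly the last face map $d_p^{p,0}=d_0^{p,0}t_{p,0}$). Since $A\natural B(\bullet,0)=C_\bullet({}_A^{~\natural}B)$ is a paracyclic module, these operators already satisfy the simplicial identities \eqref{3} and the paracyclic compatibilities \eqref{5}. The computations displayed just before the statement show that $\tau_n$ descends to the co-invariant quotient $C^A_\bullet({}_A^{~\natural}B)$; the maps $\partial_i$ (for $i<n$) and $\sigma_j$ touch only the $B$-slots and descend by analogous but simpler checks, whence $\partial_n=\partial_0\tau_n$ descends as well. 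As the projection $C_\bullet({}_A^{~\natural}B)\ra C^A_\bullet({}_A^{~\natural}B)$ intertwines all of these operators, every relation among them is inherited by the quotient, so the induced operators already make $C^A_\bullet({}_A^{~\natural}B)$ a paracyclic module. The sole additional relation needed to upgrade it to a cyclic module is the cyclicity \eqref{4}, namely $\tau_n^{n+1}=\id$.

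This cyclicity is the crux, and it genuinely fails on $C_\bullet({}_A^{~\natural}B)$ itself. To establish it on the quotient I would first evaluate $\tau_n^{n+1}$. Since $\tau_n(b_0,\ldots,b_n\,|\,a)=(b_n^R,b_0,\ldots,b_{n-1}\,|\,a^R)$ braids $a$ leftward past the last $B$-entry via $R$ and then cyclically shifts, iterating $n+1$ times braids $a$ in turn past $b_n,b_{n-1},\ldots,b_0$; writing $\Lambda_n:=R_{12}R_{23}\cdots R_{n+1,n+2}\colon B^{\ot(n+1)}\ot A\ra A\ot B^{\ot(n+1)}$ for this total left-braiding, each $B$-strand returns to its original position (now $R$-transformed, in the original order) while the fully braided image of $a$ lands in the $A$-slot. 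Thus, if $\Lambda_n(b_0,\ldots,b_n\,|\,a)=(\alpha\,|\,\beta_0,\ldots,\beta_n)$, then $\tau_n^{n+1}(b_0,\ldots,b_n\,|\,a)=(\beta_0,\ldots,\beta_n\,|\,\alpha)$, the two differing only by where $a$ is recorded.

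The argument then closes by invoking co-invariance. In $C^A_\bullet({}_A^{~\natural}B)$ we may rewrite $(\beta_0,\ldots,\beta_n\,|\,\alpha)=(\beta_0,\ldots,\beta_n\,|\,1).\alpha=\alpha.(\beta_0,\ldots,\beta_n\,|\,1)$, and by Lemma \ref{4.1} the left action of $\alpha$ is precisely the right-braiding $\Gamma_n=R^{-1}_{n+1,n+2}\cdots R^{-1}_{12}$ of $\alpha$ past all the $\beta_i$ followed by multiplication into the $A$-slot. Because $\Gamma_n$ and $\Lambda_n$ are mirror sequences of crossings, the relation $R^{-1}R=\id$ (relation (I)) cancels them crossing-by-crossing, so $\Gamma_n\Lambda_n=\id$ and the right-braiding recovers $(b_0,\ldots,b_n\,|\,a)$; multiplying the resulting $A$-slot by $1$ leaves it unchanged. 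Hence $\tau_n^{n+1}=\id$ on $C^A_\bullet({}_A^{~\natural}B)$, and the proposition follows. The main obstacle is exactly this cancellation bookkeeping: one must match the nested $R$-transformations inside $\Lambda_n$ against the $R^{-1}$-transformations produced by the left action so that $R^{-1}R=\id$ applies strand-by-strand. This is cleanest to run diagrammatically with thick and thin strands, as in the earlier propositions, rather than through the iterated index notation $a^{r_1{}^{\adots{}^{r_{n+1}}}}$.
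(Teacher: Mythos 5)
Your proposal is correct and follows essentially the same route as the paper: the paper also reduces everything to the single identity $\tau_n^{n+1}=\id$ (dismissing the remaining paracyclic relations as ``similar to check''), computes $\tau_n^{n+1}$ as the full braiding of $a$ past all the $b_i$'s, rewrites the result as a right action by the braided element, and uses co-invariance to convert it into the left action of Lemma \ref{4.1}, whose $\Gamma_n$ cancels the accumulated $R$'s via $R^{-1}R=\id$. Your packaging of the iterated superscripts into $\Lambda_n$ with $\Gamma_n\Lambda_n=\id$ is just a cleaner notation for the identical computation.
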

\begin{proof}We only check that $\tau_n^{n+1}=id$. The other
identities are similar to check. In the coinvariant subspace, we
have
\begin{align*}
\tau^{n+1}(b_0,\ldots,b_n\,|\,a)
&=\tau^n(b_n^R,b_0,\ldots,b_{n-1}\,|\,a^R)\\
&=\tau^{n-1}(b_{n-1}^{R_{2}},b_n^{R_1},b_0,\ldots,b_{n-2}\,|\,{a^{R_1}}^{R_2})=\cdots\\
&=(b_0^{R_{n+1}},\ldots,b_{n-1}^{R_{2}},b_n^{R_1}\,|\,{{a^{R_1}}^{R_2^{\adots}}}^{R_{n+1}})\\
&=(b_0^{R_{n+1}},\ldots,b_{n-1}^{R_{2}},b_n^{R_1}\,|\,1).\,{{a^{R_1}}^{R_2^{\adots}}}^{R_{n+1}}\\
&={{a^{R_1}}^{R_2^{\adots}}}^{R_{n+1}}.\,(b_0^{R_{n+1}},\ldots,b_{n-1}^{R_{2}},b_n^{R_1}\,|\,1)\\
&=(b_0,\ldots,b_{n}\,|\,a).
\end{align*}
\end{proof}
In fact, the above proposition is a special case of the following
theorem.
\begin{theorem}\label{h01}
For any $q\in\mathbb{N}$, $\Ho_q(A, C_{\bullet}({}_A^{~\natural}B))$
is a cyclic module with $(d_i,s_j,t)$ induced from operators of
$A\natural B$ defined in \eqref{7}. Especially, we have
 $$\Ho_0(A,C_{\bullet}({}_A^{~\natural}B))=C^A_{\bullet}({}_A^{~\natural}B).$$
\end{theorem}
\begin{proof}
We need to check that,  $t_{n,q}^{n+1}$ inducing on $\Ho_q(A,
C_{n}({}_A^{~\natural}B))$ turns out to be identity. For any $x\in
\Ho_q(A, C_{n}({}_A^{~\natural}B))$, $d(x)=0$, or equivalently,
$\mathrm{\bar{\,b}}(x)=0$,
\begin{align*}
(t_{n,q}^{n+1}-id)(x)&=(t_{n,q}^{n+1}-t_{n,q}^{n+1}\bar{\,t}_{n,q}^{~q+1})(x)=t_{n,q}^{n+1}(1-\bar{t}_{n,q}^{~q+1})(x)\\
&=t_{n,q}^{n+1}(\mathrm{\bar{\,b}\bar{~B}+\bar{~B}\bar{\,b}})(x)=\mathrm{\bar{\,b}\bar{~B}}\,t_{n,q}^{n+1}(x)
=0\in\Ho_q(A, C_{n}({}_A^{~\natural}B)).
\end{align*}
Since the barred operators commutate with the unbarred operators,
all unbarred operators $(d_i,s_j,t)$ are well-defined on $\Ho_q(A,
C_{\bullet}({}_A^{~\natural}B))$ preserving the relations \eqref{3}
and \eqref{5}.
\end{proof}

\begin{lemma}\label{4.3}
The homology group of the complex
$$\cdots\ra
C_{q}(A,C_{\bullet}({}_A^{~\natural}B))[[u]]\stackrel{d}\longrightarrow
C_{q-1}(A,C_{\bullet}({}_A^{~\natural}B))[[u]]\ra\cdots $$ is
$\Ho_q(A,C_{\bullet}({}_A^{~\natural}B))[[u]]$, for each $q$.
\end{lemma}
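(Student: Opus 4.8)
The plan is to observe that the differential $d$ is precisely the Hochschild boundary $\bar{\mathrm{b}}$ of the cylindrical module $A\natural B$ (as noted right after \eqref{9}): by \eqref{9} it combines and cyclically permutes the $A$-entries $a_0,\dots,a_q$ through the two $A$-actions of Lemma \ref{4.1}. Although those actions transform the $B$-entries by powers of $R^{-1}$, they keep the number of $B$-factors equal to $p+1$, so $d$ preserves the $B$-degree $p$; and $d$ plainly does not involve the formal variable $u$. Hence the entire statement reduces to the assertion that forming $-[[u]]$ along the $B$-grading commutes with taking homology in the $A$-direction (the $q$-index).

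Concretely, I would fix a $B$-degree $n$ and unfold $\big(C_q(A,C_{\bullet}({}_A^{~\natural}B))[[u]]\big)_n=\sum_{i\geq 0}C_q(A,C_{n+2i}({}_A^{~\natural}B))\,u^i$. Since $d$ carries each summand $C_q(A,C_{n+2i}({}_A^{~\natural}B))\,u^i$ into $C_{q-1}(A,C_{n+2i}({}_A^{~\natural}B))\,u^i$ and mixes neither different powers of $u$ nor different $B$-degrees, the complex splits as a product over $i$ of the Hochschild complexes $\big(C_{\bullet}(A,C_{n+2i}({}_A^{~\natural}B)),d\big)$, each computing $\Ho_q(A,C_{n+2i}({}_A^{~\natural}B))$ (well defined because $C_{n+2i}({}_A^{~\natural}B)$ is an $A$-bimodule by Lemma \ref{4.1}). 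Reassembling over $i$ returns precisely $\big(\Ho_q(A,C_{\bullet}({}_A^{~\natural}B))[[u]]\big)_n$, as claimed.

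The only point needing justification is that homology commutes with the assembly $-[[u]]$. Because $d$ preserves the power of $u$, one checks directly that the cycles and boundaries inside $C_{\bullet}(A,C_{\bullet}({}_A^{~\natural}B))[[u]]$ are exactly the power series whose coefficients are the cycles and boundaries of $C_{\bullet}(A,C_{\bullet}({}_A^{~\natural}B))$, so the quotient is $(\ker d/\im d)[[u]]$ in each degree. Conceptually this is the exactness of the functor $V\mapsto V[[u]]$ — degreewise an infinite product, hence exact over the field $k$ (compare the flatness in Lemma \ref{3.02}) — and it is the same mechanism driving Corollary \ref{coro 3.5}. I do not expect a genuine obstacle, the content being pure bookkeeping about gradings; the one subtlety is that here $[[u]]$ runs along the transverse $B$-grading rather than the homological $A$-grading, so I would re-run the short argument of Corollary \ref{coro 3.5} rather than quote it verbatim, and I would confirm against \eqref{9} and Lemma \ref{4.1} that $d$ is genuinely $u$-independent and $B$-degree-preserving, which is exactly what licenses the splitting.
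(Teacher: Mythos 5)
Your proof is correct. There is in fact nothing in the paper to compare it against: Lemma \ref{4.3} is stated there without any proof, treated as evident, and your proposal supplies precisely the justification the authors implicitly rely on --- the differential $d=\bar{\mathrm{b}}$ of \eqref{9} is $u$-independent and preserves the $B$-degree (the left $A$-action of Lemma \ref{4.1} involves $\Gamma_n$ but never changes the number of $B$-factors), so in each graded degree the complex is a degreewise product, over powers of $u$, of ordinary Hochschild complexes $C_{\bullet}(A,C_{p+2l}({}_A^{~\natural}B))$, and homology commutes with such products since kernels and images are computed coefficientwise. Your closing caveat is also well placed: Corollary \ref{coro 3.5} with $W=k[u]$ cannot be cited verbatim, because there $[[u]]$ is assembled along the homological grading while here it runs along the transverse $B$-grading, and your direct coefficientwise argument is exactly what bridges that mismatch.
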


By Corollary \ref{3.2}, in order to calculate the cyclic homology of
the strong smash algebra $A\#_{_R}B$ with coefficients in $W$, we
can compute the cyclic homology of $\Tot(A\natural B)$ with
coefficients in $W$, that is, the homology of the complex
$$\big(\Tot(A\natural B)\boxtimes W,  (\mathrm{b+\!\bar{\,b}}+u\mathrm{B}+u\mathrm{T\!\bar{~B}})\ot
id\big).$$
We define a filtration on $\Tot(A\natural B)\boxtimes W$
by rows. Set
$$F^p_{n}\big(\Tot(A\natural B)\boxtimes W\big)
=\sum_{\begin{matrix}\scriptstyle i+j=n+2l,\\
\scriptstyle i\leq p+2l,\\\scriptstyle l\geq 0
\end{matrix}}(B^{\ot(i+1)}\ot A^{\ot (j+1)})u^l\ot_{k[u]}W, \text{ for }p\geq
0; $$ and $F^p_{n}\big(\Tot(A\natural B)\boxtimes W\big) =0, \text{
for }p< 0.$

The spectral sequence  $E^{r}_{p,q}$ of this  filtration  with
$d^r:E^r_{p,q}\ra E^r_{p-r,q+r-1}$ starts from $$E_{p,q}^0
=\sum_{l\geq 0}(B^{\ot(p+2l+1)}\ot A^{\ot (q+1)})u^l\ot_{k[u]}W,
$$
equipped with $d^0=\mathrm{\bar{\,b}}\ot id:E^0_{p,q}\ra
E^0_{p,q-1}$.

Recall that
$C_{q}(A,C_{\bullet}({}_A^{~\natural}B))[[u]]=\sum_{p,l\geq
0}(B^{\ot (p+2l+1)}\ot A^{\ot (q+1)})u^l$,
 $$E_{\bullet,q}^0 =C_{q}(A,C_{\bullet}({}_A^{~\natural}B))\boxtimes W.$$

So from Lemma \ref{4.3}  and Corollary \ref{coro 3.5}, we get:
\begin{lemma} The $E^1$-term of the spectral sequence is
$$E^1_{\bullet,q}= \Ho_q(A,C_{\bullet}({}_A^{~\natural}B))\boxtimes W,$$
equipped with $d^1:E^1_{p,q}\ra E^1_{p-1,q}$ that is induced by
$(\mathrm{b}+u\mathrm{B})\ot id$.
\end{lemma}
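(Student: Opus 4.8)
The plan is to identify the $E^1$-term of the row filtration spectral sequence by recognizing that the $E^0$-page, equipped with its $d^0$ differential, is literally the Hochschild complex computing $\Ho_q(A,C_{\bullet}({}_A^{~\natural}B))$, tensored with the coefficient module in the power-series sense. The bulk of the work has been arranged so that this identification is almost a bookkeeping exercise, and the proof should consist of assembling the pieces already in hand rather than any new computation. First I would fix the column $q$ and examine the complex $(E^0_{\bullet,q},d^0)$ with $d^0=\mathrm{\bar{\,b}}\ot id$: reading off the definition of $E^0_{p,q}$ as $\sum_{l\geq 0}(B^{\ot(p+2l+1)}\ot A^{\ot(q+1)})u^l\ot_{k[u]}W$ and comparing it termwise with the expression $C_{q}(A,C_{\bullet}({}_A^{~\natural}B))[[u]]=\sum_{p,l\geq 0}(B^{\ot(p+2l+1)}\ot A^{\ot(q+1)})u^l$, one sees directly that $E^0_{\bullet,q}=C_{q}(A,C_{\bullet}({}_A^{~\natural}B))\boxtimes W$, exactly as already noted just before Lemma~\ref{4.3} in the excerpt.

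The second step is to compute the homology of this complex in the $p$-direction under $d^0$. Here I would invoke Lemma~\ref{4.3}, which identifies the homology of the complex $C_{q}(A,C_{\bullet}({}_A^{~\natural}B))[[u]]$ under $d$ as $\Ho_q(A,C_{\bullet}({}_A^{~\natural}B))[[u]]$, together with Corollary~\ref{coro 3.5}, which lets one pass the power-series and $\ot_{k[u]}W$ operations through the homology functor because $C_{q}(A,C_{\bullet}({}_A^{~\natural}B))[[u]]$ is a complex of flat $k[u]$-modules (by Lemma~\ref{3.02}) whose differential is independent of $u$. Combining these gives $E^1_{p,q}=\Ho_q(A,C_{\bullet}({}_A^{~\natural}B))\boxtimes W$ as claimed. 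The point requiring care is that $d^0$ is precisely $\mathrm{\bar{\,b}}$ (and not some mixture involving $\mathrm{b}$, $\mathrm{B}$, or the $u$-twisted pieces); this is forced by the shape of the filtration by rows, since the remaining summands of the total differential $\mathrm{b}+\mathrm{\bar{\,b}}+u\mathrm{B}+u\mathrm{T\bar{~B}}$ all strictly lower the filtration degree $p$ and therefore contribute to $d^1$ and higher, not to $d^0$.

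The third and final step is to identify the induced differential $d^1\colon E^1_{p,q}\ra E^1_{p-1,q}$. I would track which terms of the total differential drop $p$ by exactly one: these are $\mathrm{b}$ and $u\mathrm{B}$, and the twisted pieces $\mathrm{\bar{\,b}}$ and $u\mathrm{T\bar{~B}}$ either preserve or lower $q$ rather than $p$. Passing to homology in the $q$-unbarred sense, $d^1$ is induced by $(\mathrm{b}+u\mathrm{B})\ot id$, which is well-defined on $E^1$ precisely because, by Theorem~\ref{h01}, the unbarred cyclic operators $(d_i,s_j,t)$ descend to $\Ho_q(A,C_{\bullet}({}_A^{~\natural}B))$ and hence so do the associated $\mathrm{b}$ and $\mathrm{B}$. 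The main obstacle I anticipate is not computational but one of careful index-accounting on the filtration: verifying that the flatness hypotheses of Corollary~\ref{coro 3.5} genuinely apply here (the differential must not involve $u$, which holds for $d^0=\mathrm{\bar{\,b}}$ but would fail for the full twisted differential), and confirming that the boundary map the spectral sequence machinery produces at the $E^1$-stage matches $(\mathrm{b}+u\mathrm{B})\ot id$ on the nose rather than up to sign or a correction term coming from the cylindrical twist $\mathrm{T}$. Once these compatibilities are checked, the statement follows immediately from the quoted lemmas.
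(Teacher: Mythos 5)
Your proposal is correct and follows essentially the same route as the paper, which likewise identifies $E^0_{\bullet,q}=C_q(A,C_{\bullet}({}_A^{~\natural}B))\boxtimes W$ with $d^0=\bar{\mathrm{b}}\ot \mathrm{id}$ and then deduces the lemma directly from Lemma \ref{4.3} and Corollary \ref{coro 3.5}. The only slip is in your final bookkeeping: $u\mathrm{T}\bar{\mathrm{B}}$ does not merely ``preserve or lower $q$'' --- the factor $u$ shifts the power $l$ by one, so this term lowers the filtration degree $p$ by two and is excluded from $d^1$ for that reason (it contributes only from $E^2$ onward); your conclusion that $d^1$ is induced by $(\mathrm{b}+u\mathrm{B})\ot\mathrm{id}$ is nevertheless exactly right.
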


\begin{theorem}\label{lim1}
The $E^2$-term of the spectral sequence is identified with the
cyclic homology
 of the cyclic module $\Ho_{\bullet}\big(A,C_{\bullet}({}_A^{~\natural}B)\big)$
  with coefficients in $W$. It converges to the cyclic homology of
  the strong smash product algebra $A\#_{_R}B$ with coefficients in $W$. That is,
$$
E^2_{p,q}=\Hc_p\Big(\Ho_q\big(A,C_{\bullet}({}_A^{~\natural}B)\big); W\Big)\Rightarrow \Hc_{p+q}(A\#_{_R}B;W).
$$
\end{theorem}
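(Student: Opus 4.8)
The plan is to run the homological spectral sequence of the filtration $F^{\bullet}$ on the complex $\big(\Tot(A\natural B)\boxtimes W,\,(\mathrm{b}+\bar{\mathrm{b}}+u\mathrm{B}+u\mathrm{T}\bar{\mathrm{B}})\ot id\big)$, whose homology computes $\Hc_{\bullet}(A\#_{_R}B;W)$ by Corollary \ref{3.2}. Two statements must be verified: that the $E^2$-page is the asserted cyclic homology, and that the sequence converges to the homology of the total complex. The first is a matter of matching the preceding lemma (which already computes $E^1$) against the definition of cyclic homology of a mixed complex with coefficients; the second reduces to boundedness of the filtration.

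For the $E^2$-identification I would start from the preceding lemma, which gives $E^1_{\bullet,q}=\Ho_q\big(A,C_{\bullet}({}_A^{~\natural}B)\big)\boxtimes W$ with $d^1$ induced by $(\mathrm{b}+u\mathrm{B})\ot id$. By Theorem \ref{h01}, $\Ho_q\big(A,C_{\bullet}({}_A^{~\natural}B)\big)$ is a genuine cyclic module whose structure maps $(d_i,s_j,t)$ are induced from the unbarred operators of $A\natural B$; hence its associated mixed-complex differentials are exactly the unbarred $\mathrm{b}$ and $\mathrm{B}$. By the very definition of the cyclic homology of a mixed complex with coefficients in $W$ as the homology of $(-\boxtimes W,\,(\mathrm{b}+u\mathrm{B})\ot id)$, taking $d^1$-homology yields
$$E^2_{p,q}=\Ho_p\big(E^1_{\bullet,q},d^1\big)=\Hc_p\Big(\Ho_q\big(A,C_{\bullet}({}_A^{~\natural}B)\big);W\Big),$$
as claimed. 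What makes this clean is the filtration-degree bookkeeping: writing $\mathbb{B}=\mathrm{B}+\mathrm{T}\bar{\mathrm{B}}$, the mixed term $u\mathrm{T}\bar{\mathrm{B}}$ raises both the $A$-degree and the power of $u$ and so lowers the filtration degree $i-2l$ by two, whereas each of $\mathrm{b}$ and $u\mathrm{B}$ lowers it by exactly one and $\bar{\mathrm{b}}$ preserves it. Thus $d^0=\bar{\mathrm{b}}\ot id$, the mixed term contributes only to $d^2$ and beyond, and no $\bar{\mathrm{B}}$-contribution pollutes $d^1$.

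For convergence I would observe that $F^{\bullet}$ is a filtration by subcomplexes—each of $\mathrm{b},\bar{\mathrm{b}},u\mathrm{B},u\mathrm{T}\bar{\mathrm{B}}$ preserves or lowers the filtration degree, so the total differential sends $F^p$ into $F^p$—and that it is bounded in every total degree. Indeed $F^p_n=0$ for $p<0$, while for $p\geq n$ any homogeneous summand $(B^{\ot(i+1)}\ot A^{\ot(j+1)})u^l$ in total degree $n$ satisfies $i+j=n+2l$ with $j\geq 0$, forcing $i-2l\leq n\leq p$, so it already lies in $F^p_n$; hence $F^n_n=(\Tot(A\natural B)\boxtimes W)_n$. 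This finite filtration in each degree—which tames the formal $u$-power series automatically—guarantees convergence, with $E^{\infty}_{p,q}$ the associated graded of $\Ho_{p+q}(\Tot(A\natural B)\boxtimes W)=\Hc_{p+q}(\Tot(A\natural B);W)$. Applying Corollary \ref{3.2} once more to replace $\Hc_{p+q}(\Tot(A\natural B);W)$ by $\Hc_{p+q}(A\#_{_R}B;W)$ then finishes the proof.

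The step I expect to require the most care is precisely the filtration-degree bookkeeping that underlies the identification of the differentials—checking that $d^0$ is exactly $\bar{\mathrm{b}}$, that the induced $d^1$ is exactly $(\mathrm{b}+u\mathrm{B})\ot id$ so that $(E^1_{\bullet,q},d^1)$ is a mixed complex carrying the correct cyclic differential, and that the mixed term $u\mathrm{T}\bar{\mathrm{B}}$ is genuinely pushed into higher differentials. Once this is pinned down, both the $E^2$-identification and the convergence are formal consequences of the preceding lemma, Theorem \ref{h01}, the boundedness just noted, and Corollary \ref{3.2}.
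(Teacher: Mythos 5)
Your proposal is correct and follows essentially the same route as the paper: the paper's (largely implicit) argument is exactly to identify $E^1$ via the preceding lemma, invoke Theorem \ref{h01} so that taking $d^1$-homology of $\Ho_q\big(A,C_{\bullet}({}_A^{~\natural}B)\big)\boxtimes W$ with differential $(\mathrm{b}+u\mathrm{B})\ot id$ is by definition the cyclic homology with coefficients in $W$, and then conclude convergence to $\Hc_{p+q}(A\#_{_R}B;W)$ from the boundedness of the row filtration together with Corollary \ref{3.2}. Your filtration-degree bookkeeping (that $\bar{\mathrm{b}}$ preserves, $\mathrm{b}$ and $u\mathrm{B}$ lower by one, and $u\mathrm{T}\bar{\mathrm{B}}$ lowers by two the index $i-2l$) is the correct justification for why $d^0$ and $d^1$ are what the paper asserts.
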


In parallel, one can also consider the bottom row of the cylindrical
module $A\natural B$. We just state the process and indicate the
differences here. We skip proofs which are similar as in previous
discussions.

Denote the paracyclic module $A\natural B(0,\bullet)$ by
$C_{\bullet}(A_B^{\natural})$.
 \begin{lemma} \label{4.7}For each $n\in \mathbb{N}$,
$C_{n}(A_B^{\natural})$ is a $B$-bimodule via the left $B$-module
action
$$b.( b_0\,|\,a_0,\ldots,a_n)=( bb_0\,|\,a_0,\ldots,a_n)$$
and the right $B$-module action
$$( b_0\,|\,a_0,\ldots,a_n).b=(m_B\ot id^{\ot (n+1)})\big(b_0\,|\,
\Theta_n^{-1}(a_0,\ldots,a_n,b)\big),$$ where $\Theta_n$ is defined
in Section \ref{222}, and $a_i\in A, b_0, b\in B$.
\end{lemma}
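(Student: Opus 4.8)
The plan is to recognize this statement as the exact mirror image of Lemma \ref{4.1} under the symmetry of Proposition \ref{1.2}. Exchanging the roles $A\leftrightarrow B$ and $R\leftrightarrow R^{-1}$ carries the smash product $A\#_{_R}B$ to $B\#_{_{R^{-1}}}A$, and under this exchange the bottom row $A\natural B(0,\bullet)=C_{\bullet}(A_B^{\natural})$ viewed over $B$ corresponds precisely to the first column $C_{\bullet}({}_A^{~\natural}B)$ viewed over $A$; simultaneously $\Theta_n^{-1}$ takes over the role played by $\Gamma_n$ and the left/right actions get interchanged. So one clean route is simply to invoke Lemma \ref{4.1} for $B\#_{_{R^{-1}}}A$. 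I will, however, sketch the direct verification, since it is the version reused diagrammatically later and it pins down exactly where the hypotheses enter.

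First the left action. The map $b.(b_0\,|\,a_0,\ldots,a_n)=(bb_0\,|\,a_0,\ldots,a_n)$ is just left multiplication of $B$ on the single factor $b_0$, leaving the $A$-factors untouched, so the unit law $1_B.x=x$ and associativity $(bb').x=b.(b'.x)$ are inherited at once from the algebra $B$. This is the trivial side.

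Next the right action, which is the only nonformal part. For the unit law I would use normality: $R(b\ot 1)=1\ot b$ together with $R(1\ot a)=a\ot 1$ forces $\Theta_n(1_B\ot a_0\ot\cdots\ot a_n)=a_0\ot\cdots\ot a_n\ot 1_B$, hence $\Theta_n^{-1}(a_0\ot\cdots\ot a_n\ot 1_B)=1_B\ot a_0\ot\cdots\ot a_n$ and $x.1_B=(m_B\ot id^{\ot(n+1)})(b_0\,|\,1_B,a_0,\ldots,a_n)=x$. For associativity $(x.b).b'=x.(bb')$ I would use that $R^{-1}$ is quasitriangular (Proposition \ref{1.2}): writing $R^{-1}(a\ot b)=b^r\ot a^r$, the axiom $R^{-1}(id\ot m_B)=(m_B\ot id)R^{-1}_{23}R^{-1}_{12}$ gives $R^{-1}(a\ot bb')=b^r(b')^{r'}\ot(a^r)^{r'}$, that is, transporting $bb'$ leftward past one copy of $A$ equals transporting $b$ first and then $b'$ past the resulting $A$-factor and multiplying the two $B$-outputs in that order. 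Iterating this identity along $\Theta_n^{-1}$ across all $n+1$ copies of $A$, and using associativity of $m_B$ to collect the outputs, yields exactly $(x.b).b'=x.(bb')$; diagrammatically this is the stacking of two $R^{-1}$-braid columns into one followed by a single $B$-multiplication, the moves already used in Section \ref{222}.

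Finally, compatibility. The left action multiplies $b_0$ on the left, while the right action, after transporting $b'$ leftward through the $A$-factors by $\Theta_n^{-1}$, multiplies the resulting $B$-element onto $b_0$ from the right; since the transport of $b'$ and the transformation of the $a_i$ are computed independently of $b_0$, the two actions sit on opposite sides of $b_0$ and commute, giving $(b.x).b'=b.(x.b')$. Hence $C_n(A_B^{\natural})$ is a $B$-bimodule. The main obstacle is thus entirely concentrated in the associativity of the right action; everything else is formal, and even that step is the mirror of the corresponding claim in Lemma \ref{4.1}, so the genuine work is only the index bookkeeping needed to see that $\Theta_n^{-1}$ plays the symmetric role of $\Gamma_n$ under Proposition \ref{1.2}.
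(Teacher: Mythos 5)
Your proposal is correct and follows essentially the same route as the paper: the paper omits a separate proof of Lemma \ref{4.7} precisely because it is the mirror of Lemma \ref{4.1} under the exchange $A\leftrightarrow B$, $R\leftrightarrow R^{-1}$ of Proposition \ref{1.2}, with the only nontrivial side (here the right action via $\Theta_n^{-1}$) handled by quasitriangularity and normality of $R^{-1}$ and compatibility being formal. Your direct verification just spells out the details that the paper's proof of Lemma \ref{4.1} cites in one line.
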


For each $q\in \mathbb{N}$,  a \textit{Hochschild complex}
$(C_{\bullet}(B,C_{q}(A^{\natural}_B)),\de)$ can be defined, its
homology is \textit{the Hochschild homology of the algebra} $B$ with
coefficients in $C_{q}(A^{\natural}_B)$. The Hochschild complex is
defined explicitly as follows: For any $p\in \mathbb{N}$,
$$
C_{p}(B,C_{q}(A^{\natural}_B))=C_{q}(A^{\natural}_B)\ot B^{\ot p}
=B\ot A^{\ot (q+1)}\ot B^{\ot p},
$$
the differential  $\de:
C_{p}(B,C_{q}(A^{\natural}_B))\ra C_{p-1}(B,C_{q}(A^{\natural}_B))$
is
\begin{equation}\label{11}
\begin{split}
\de(b_0\,|\,a_0,\ldots,a_q\,|\,b_1,\ldots,b_{p})
&=\big((b_0\,|\,a_0,\ldots,a_q).b_1\,|\,b_2,\ldots,b_{p}\big)\\
&\ +\sum_{i=1}^{p-1}(-1)^{i}(b_0\,|\,a_0,\ldots,a_q\,|\,b_1,\ldots,b_ib_{i+1},\ldots,b_{p})\\
&\ +(-1)^{p}\big(b_p.(b_0\,|\,a_0,\ldots,a_q)\,|\,
 b_1,\ldots,b_{p-1}\big).
 \end{split}
\end{equation}
Denote this Hochschild homology by
$\Ho_{\bullet}(B,C_{q}(A^{\natural}_B))$.

A difference occurs here, as the positions of $A$'s and $B$'s are
changed.
\begin{coro}
$C_{\bullet}(B,C_{\bullet}(A_B^{\natural}))$ is a cylindrical
module, which is isomorphic to  $A\natural B$.
\end{coro}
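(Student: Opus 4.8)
The plan is to treat this Corollary as the exact ``row'' analogue of the earlier ``column'' Corollary that identified $C_{\bullet}(A,C_{\bullet}({}_A^{~\natural}B))$ with $A\natural B$, and to supply the one extra ingredient that the row version needs: a reordering isomorphism. First I would record the underlying graded vector spaces. By Lemma \ref{4.7} and \eqref{11}, for all $p,q$ one has
$$C_p(B,C_q(A_B^{\natural}))=C_q(A_B^{\natural})\ot B^{\ot p}=B\ot A^{\ot(q+1)}\ot B^{\ot p},$$
which carries the same number of tensor factors, namely $p+1$ copies of $B$ and $q+1$ copies of $A$, as $A\natural B(p,q)=B^{\ot(p+1)}\ot A^{\ot(q+1)}$, but in a different order: the $p$ extra copies of $B$ produced by the Hochschild complex of $B$ land on the far right of the $A$-block instead of next to $b_0$. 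This is precisely the ``difference'' flagged above, and it is the only reason a genuine isomorphism (rather than a plain identification, as in the column case) is required.

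Next I would define the reordering map $\Xi_{p,q}\colon B\ot A^{\ot(q+1)}\ot B^{\ot p}\ra B^{\ot(p+1)}\ot A^{\ot(q+1)}$ that braids the trailing block $B^{\ot p}$ leftward through $A^{\ot(q+1)}$ so that $b_1,\ldots,b_p$ come to sit, in order, immediately after $b_0$. Concretely $\Xi_{p,q}$ is an iterated composite of $R^{-1}$'s of the same shape as $\Gamma_p$ and $\Theta_q^{-1}$: each $b_j$ is carried across the $A$'s by the appropriate product of $R^{-1}_{i,i+1}$'s. Its candidate inverse carries the block back through the $A$'s by the corresponding product of $R_{i,i+1}$'s, and $\Xi_{p,q}^{-1}\Xi_{p,q}=\id$ follows from $RR^{-1}=R^{-1}R=\id$, i.e.\ relation (I), together with the far-commutativity $R^{\pm1}_{i,i+1}R^{\pm1}_{j,j+1}=R^{\pm1}_{j,j+1}R^{\pm1}_{i,i+1}$ for $|i-j|>1$ used already in Proposition \ref{2.6}. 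This yields a bijection at each bidegree.

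The heart of the proof is then to check that $\Xi=(\Xi_{p,q})$ intertwines the two cylindrical structures. The degeneracies and the interior face maps involve only unit insertions and multiplications of adjacent like factors, which commute with the reordering $\Xi$ by the normality and quasitriangularity of $R^{\pm1}$; these identities are routine. The real content is in the cyclic operators and the induced last face maps. Here the bottom-row construction incorporates a new $B$ through the right $B$-action of Lemma \ref{4.7}, i.e.\ by an $R^{-1}$-braiding via $\Theta_q^{-1}$, whereas the cyclic operator $t_{p,q}$ of $A\natural B$ in \eqref{7} braids $b_p$ across the $A$'s by an $R$-braiding via $\Theta_q$; symmetrically, the barred operator $\bar t_{p,q}$ of \eqref{8} must carry $a_q$ across all $p+1$ copies of $B$ through $\Gamma_p$, while the bottom row only sees $\Gamma_0$. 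In both situations $\Xi$ is exactly the map that transports one braiding convention to the other, and the verification is carried out by the graphical calculus already developed: pushing the $R^{-1}$-crossings coming from $\Xi$ through the $R$- (resp.\ $R^{-1}$-) crossings of the operators using quasitriangularity and the flip-map braid relations, i.e.\ relations (I) and (II). These are the same moves used in the proof that $A\natural B$ is cylindrical and in Proposition \ref{2.6}.

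I expect the compatibility with the cyclic and last-face operators to be the main obstacle, since that is where the opposite braidings $R$ and $R^{-1}$ collide and must be reconciled. Once $\Xi$ is shown to commute with $t$, $\bar t$ and the last faces, the remaining bi-paracyclic relations and the cylindrical identity $t_{p,q}^{p+1}\bar t_{p,q}^{\,q+1}=\id$ transport across $\Xi$ automatically, so that $C_{\bullet}(B,C_{\bullet}(A_B^{\natural}))$ is a cylindrical module and $\Xi$ is an isomorphism of cylindrical modules onto $A\natural B$. As a conceptual check one may note, via Proposition \ref{1.2}, that $B\#_{_{R^{-1}}}A$ is again a strong smash product algebra, which is exactly what makes the two readings of the construction symmetric in the first place.
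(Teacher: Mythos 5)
Your proposal is correct and takes essentially the same route as the paper: your reordering map $\Xi_{p,q}$ is exactly the paper's $\phi_{p,q}$ (an iterated composite of $\Theta_q^{-1}$'s, inverted via relation (I) and the far-commutativity $R^{\pm1}_{i,i+1}R^{\pm1}_{j,j+1}=R^{\pm1}_{j,j+1}R^{\pm1}_{i,i+1}$ for $|i-j|>1$), and the cylindrical structure is carried across this isomorphism. The only difference is one of economy: the paper simply \emph{defines} the operators on $C_{\bullet}(B,C_{\bullet}(A_B^{\natural}))$ by conjugation with $\phi_{p,q},\psi_{p,q}$ (its equations \eqref{12}), so the bi-paracyclic and cylindrical axioms hold automatically and the only genuine verification is $\phi\de=\mathrm{b}\phi$ against the pre-existing Hochschild differential of \eqref{11} --- which is precisely the $R$-versus-$R^{-1}$ reconciliation you single out as the crux.
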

\begin{proof}
We give the isomorphisms between
$C_{\bullet}(B,C_{\bullet}(A_B^{\natural}))$ and $A\natural B$, then
the bi-paracyclic operators on
$C_{\bullet}(B,C_{\bullet}(A_B^{\natural}))$ are constructed from
the operators on $A\natural B$ through the isomorphisms. In this
way, we get the corollary. We give the isomorphisms and the
operators on $C_{\bullet}(B,C_{\bullet}(A_B^{\natural}))$
explicitly. For each $p, q\in\mathbb{N}$,
$$
\phi_{p,q}:C_{p}(B,C_{q}(A_B^{\natural}))\longrightarrow A\natural B(p,q)
$$
$$
\phi_{p,q}=(id^{\ot
p}\ot \Theta_q^{-1})(id^{\ot (p-1)}\ot \Theta_q^{-1}\ot id)\cdots
(id\ot \Theta_q^{-1}\ot id ^{\ot (p-1)}),
$$ and
$$
\psi_{p,q}: A\natural B(p,q) \longrightarrow  C_{p}(B,C_{q}(A_B^{\natural}))
$$
$$
\psi_{p,q}=\phi_{p,q}^{-1}=(id\ot
\Theta_q\ot id ^{\ot (p-1)})(id^{\ot 2}\ot\Theta_q\ot id ^{\ot
(p-2)})\cdots(id^{\ot p}\ot \Theta_q).
$$
We can see that
$$\phi\de=\mathrm{b}\phi,\quad\psi\mathrm{b}=\de\psi.$$
Hence,  the operators
$(\mathtt{d}_i,\mathtt{s}_j,\mathtt{t},\mathtt{\bar{d}}_i,
\mathtt{\bar{s}}_j,\mathtt{\bar{t}})$ on
$C_{p}(B,C_{q}(A_B^{\natural}))$ are defined as follows:
\begin{equation}\begin{split}\label{12}
\mathtt{d}_i^{p,q}=\psi_{p-1,q}d_i^{p,q}\phi_{p,q},&\quad
 \mathtt{\bar{d}}_i^{p,q}=\psi_{p,q-1}\bar{d}_i^{p,q}\phi_{p,q},\\
\mathtt{s}_j^{p,q}=\psi_{p+1,q}s_j^{p,q}\phi_{p,q},&\quad
\mathtt{\bar{s}}_j^{p,q}=\psi_{p,q+1}\bar{s}_j^{p,q}\phi_{p,q},\\
\mathtt{t}_{p,q}=\psi_{p,q}{t}_{p,q}\phi_{p,q},&\quad
\mathtt{\bar{t}}_{p,q}=\psi_{p,q}{\bar{t}}_{p,q}\phi_{p,q}.
\end{split}
\end{equation}
\end{proof}

Define $C^B_{\bullet}(A_B^{\natural})$ as  the \textit{co-invariant
space} of $C_{\bullet}(A_B^{\natural})$ under the left and right
actions given in Lemma \ref{4.7}, i.e.,
$$
C^B_{\bullet}(A_B^{\natural}) =C_{\bullet}(A_B^{\natural})/
\mathrm{span}\{b.x-x.b\mid b\in B,x\in
C_{\bullet}(A_B^{\natural})\}.
$$
And we define the following
operators on $C^B_{\bullet}(A_B^{\natural})$:
 \begin{equation}\label{13}
 \begin{split}
\tau'_n(b\,|\,a_0,\ldots,a_n)&=(R^{-1}(a_n\ot b),a_0,\ldots,a_{n-1})\\
\partial'_i(b\,|\,a_0,\ldots,a_n)&=(b\,|\,a_0,\ldots,a_ia_{i+1},\ldots,a_n),
\quad \text{for }0\leq i<n,\\
\partial'_n(b\,|\,a_0,\ldots,a_n)&=\partial'_0\tau'_n(b\,|\,a_0,\ldots,a_n),\\
\sigma'_j(b\,|\,a_0,\ldots,a_n)&=(b\,|\,a_0,\ldots,a_j,1,\ldots,a_n),\quad
\text{for }0\leq j\leq n.
 \end{split}
 \end{equation}
Indeed, $\tau'_n$ is induced by ${\bar{t}}_{0,n}$, as
$\psi_{p,q}=id$ and $\phi_{p,q}=id$ when $p$ is $0$. One can check
that these operators are well defined on the co-invariant space.

\begin{theorem}\label{h02}
For any $p\in\mathbb{N}$, $\Ho_p(B, C_{\bullet}(A_B^{\natural}))$ is
a cyclic module with
$(\mathtt{\bar{d}}_i,\mathtt{\bar{s}}_j,\mathtt{\bar{t}})$ induced
from operators of $C_{\bullet}(B,C_{\bullet}(A_B^{\natural}))$
defined in \eqref{12}. Especially, we have
 $$\Ho_0(B,C_{\bullet}(A_B^{\natural}))=C^B_{\bullet}(A_B^{\natural})$$is a cyclic module with operators
defined in \eqref{13}.
\end{theorem}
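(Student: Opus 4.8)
The plan is to run the argument of Theorem \ref{h01} essentially verbatim, but on the isomorphic cylindrical module $C_{\bullet}(B,C_{\bullet}(A_B^{\natural}))$ supplied by the preceding corollary, with the roles of the barred and unbarred families of operators interchanged. First I would observe that taking the Hochschild homology $\Ho_p(B,-)$ means passing to the homology of the differential $\de$ of \eqref{11}, which under the transport isomorphisms $\phi,\psi$ of \eqref{12} is precisely the unbarred differential $\mathtt{b}=\sum_i(-1)^i\mathtt{d}_i^{p,q}$: indeed $\mathtt{d}_i^{p,q}=\psi_{p-1,q}d_i^{p,q}\phi_{p,q}$ gives $\mathtt{b}=\psi\,\mathrm{b}\,\phi=\de$. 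Since the barred operators $\mathtt{\bar d}_i,\mathtt{\bar s}_j,\mathtt{\bar t}$ commute with all unbarred ones, and in particular with $\mathtt{b}$, they descend to $\Ho_p(B,C_{\bullet}(A_B^{\natural}))$ and there automatically satisfy the simplicial relations \eqref{3} and the paracyclic relations \eqref{5}. Thus the only point needing proof is the cyclicity \eqref{4}, namely that the induced $\mathtt{\bar t}_{p,q}^{\,q+1}$ acts as the identity on $\Ho_p(B,C_q(A_B^{\natural}))$.

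For this I would write $\mathtt{T}=\mathtt{t}_{p,q}^{\,p+1}$, $\mathtt{\bar T}=\mathtt{\bar t}_{p,q}^{\,q+1}$, and let $\mathtt{b},\mathtt{B}$ be the operators of \eqref{6} attached to the \emph{unbarred} paracyclic structure. The cylindrical condition reads $\mathtt{T}\mathtt{\bar T}=1$, and since the two families commute also $\mathtt{\bar T}\mathtt{T}=1$; Lemma \ref{2.2}, applied to the unbarred structure, gives $\mathtt{b}\mathtt{B}+\mathtt{B}\mathtt{b}=1-\mathtt{T}$. Then for any $\mathtt{b}$-cycle $x$ (i.e.\ $\mathtt{b}(x)=0$),
\begin{align*}
(\mathtt{\bar T}-\id)(x)&=(\mathtt{\bar T}-\mathtt{\bar T}\mathtt{T})(x)=\mathtt{\bar T}(1-\mathtt{T})(x)\\
&=\mathtt{\bar T}(\mathtt{b}\mathtt{B}+\mathtt{B}\mathtt{b})(x)=\mathtt{\bar T}\mathtt{b}\mathtt{B}(x)=\mathtt{b}\,\mathtt{\bar T}\mathtt{B}(x),
\end{align*}
which is a $\mathtt{b}$-boundary and hence vanishes in $\Ho_p(B,C_q(A_B^{\natural}))$. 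This establishes $\mathtt{\bar t}_{p,q}^{\,q+1}=\id$ on homology, so that $\Ho_p(B,C_{\bullet}(A_B^{\natural}))$ is a cyclic module.

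For the final assertion I would identify $\Ho_0(B,-)$ with the zeroth Hochschild homology, that is, the coequalizer of the two $B$-actions of Lemma \ref{4.7}, which is by definition the co-invariant space $C_{\bullet}^B(A_B^{\natural})$. It then remains to verify that the operators induced on this quotient by $(\mathtt{\bar d}_i,\mathtt{\bar s}_j,\mathtt{\bar t})$ agree with $(\partial_i',\sigma_j',\tau_n')$ of \eqref{13}; for the cyclic operator this is the remark already recorded after \eqref{13}, that $\tau_n'$ is induced by $\bar t_{0,n}$ because $\phi_{p,q}=\psi_{p,q}=\id$ when $p=0$, while the face and degeneracy maps are read off directly, and well-definedness on the co-invariant space is exactly the kind of computation displayed explicitly before the earlier proposition.

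I do not expect a genuine obstacle, since the whole argument is the mirror image of Theorem \ref{h01} under the symmetry exchanging $A$ with $B$ and barred with unbarred operators. The one place demanding care is the bookkeeping created by the transport isomorphism: one must confirm that the $B$-Hochschild differential $\de$ corresponds to the \emph{unbarred} $\mathtt{b}$ (so the cycle condition is $\mathtt{b}(x)=0$ and the surviving cyclic operator is the barred $\mathtt{\bar t}$), and that both the commutation of the two operator families and the identity of Lemma \ref{2.2} transport intact across $\phi,\psi$ — which they do, precisely because $\phi$ and $\psi$ are isomorphisms of bi-paracyclic modules.
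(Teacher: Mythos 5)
Your proposal is correct and is precisely the argument the paper intends: the paper skips this proof as ``similar'' to Theorem \ref{h01}, and your mirror-image computation --- taking $\mathtt{b}$-cycles instead of $\bar{\mathrm{b}}$-cycles and showing $(\mathtt{\bar T}-\id)(x)=\mathtt{b}\,\mathtt{\bar T}\mathtt{B}(x)$ is a boundary, hence zero in homology --- is exactly that proof with barred and unbarred roles exchanged. Your extra bookkeeping (deriving $\mathtt{\bar T}\mathtt{T}=1$ from commutativity of the two families, and identifying $\de$ with the transported unbarred differential $\mathtt{b}=\psi\,\mathrm{b}\,\phi$ via $\psi\mathrm{b}=\de\psi$) correctly fills in the details the paper leaves implicit.
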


We define a filtration on $\Tot(A\natural B)\boxtimes W$ by columns.
Set
$$\widetilde{F}^q_n\big(\Tot(A\natural B)\boxtimes W\big)
=\sum_{\begin{matrix}\scriptstyle i+j=n+2l,\\
\scriptstyle j\leq q+2l,\\\scriptstyle l\geq 0\end{matrix}} (B^{\ot
(i+1)}\ot A^{\ot (j+1) })u^l\ot_{k[u]}W,\text{ for }q\geq 0;$$ and
$\widetilde{F}^q_n\big(\Tot(A\natural B)\boxtimes W\big) =0$ for
$q<0$.

 The
spectral sequence $\widetilde{E}^{r}_{q,p}$ of this filtration
 with $\tilde{d}^r:\widetilde{E}^r_{q,p}\ra
\widetilde{E}^r_{q-r,p+r-1}$ starts from
\begin{align*}
\widetilde{E}_{q,p}^0 &=\sum_{l\geq 0} (B^{\ot (p+1)}\ot A^{\ot
(q+2l+1) })u^l\ot_{k[u]}W,
\end{align*}
equipped with $\tilde{d}^0=\mathrm{b}\ot
id:\widetilde{E}^0_{q,p}\longrightarrow \widetilde{E}^0_{q,p-1}$.

\begin{lemma}The $E^1$-term of the spectral sequence is
$$\widetilde{E}^1_{\bullet,p}=\Ho_p(B,C_{\bullet}(A_B^{\natural}))\boxtimes W,$$
equipped with $\tilde{d}^1:\widetilde{E}^1_{q,p}\longrightarrow
\widetilde{E}^1_{q-1,p}$ that is induced by
$(\mathrm{\bar{\,b}}+u\mathrm{\bar{\,B}})\ot id$.
\end{lemma}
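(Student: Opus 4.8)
The plan is to run the column filtration argument as the exact mirror image of the row filtration already carried out in Theorem~\ref{lim1} and its preceding lemmas, transposing the roles of $A$ and $B$ throughout. First I would identify the $E^0$-page. By definition of $\widetilde{F}^q_\bullet$, the associated graded in column degree $q$ collects exactly those terms $(B^{\ot(p+1)}\ot A^{\ot(q+2l+1)})u^l\ot_{k[u]}W$ with the $A$-length free to grow, so that $\widetilde{E}^0_{q,p}=\sum_{l\geq0}(B^{\ot(p+1)}\ot A^{\ot(q+2l+1)})u^l\ot_{k[u]}W$ with differential $\tilde d^0=\mathrm{b}\ot id$. The key observation, parallel to the identity $E^0_{\bullet,q}=C_q(A,C_\bullet({}_A^{~\natural}B))\boxtimes W$ in the row case, is that this column is precisely $C_p(B,C_\bullet(A^\natural_B))\boxtimes W$: indeed $C_p(B,C_q(A^\natural_B))=B\ot A^{\ot(q+1)}\ot B^{\ot p}\cong A\natural B(p,q)$ and $\mathrm{b}$ is the Hochschild differential $\de$ of \eqref{11} under this identification (just as $\mathrm{\bar{\,b}}$ was $d$ in the row case).

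Next I would compute the $E^1$-page by taking homology along $\tilde d^0$. This is where I would invoke the flatness machinery of Section~\ref{33}. The differential $\mathrm{b}$ does not involve $u$, so by the column analogue of Lemma~\ref{4.3} together with Corollary~\ref{coro 3.5}, the homology of $(C_p(B,C_\bullet(A^\natural_B))[[u]],\mathrm{b})$ in degree is $\Ho_p(B,C_\bullet(A^\natural_B))[[u]]$; tensoring with $W$ over $k[u]$ then gives
$$\widetilde{E}^1_{\bullet,p}=\Ho_p(B,C_\bullet(A^\natural_B))\boxtimes W.$$
The surviving differential $\tilde d^1\colon\widetilde{E}^1_{q,p}\ra\widetilde{E}^1_{q-1,p}$ is then induced by the remaining part of the total differential $(\mathrm{b}+\mathrm{\bar{\,b}}+u\mathrm{B}+u\mathrm{T\bar{~B}})\ot id$ after $\mathrm{b}$ has been used up. Since we filtered by columns (bounding the $A$-length $j\leq q+2l$), the part lowering $q$ while preserving the $B$-homology class is exactly $(\mathrm{\bar{\,b}}+u\mathrm{\bar{\,B}})\ot id$, so $\tilde d^1$ is induced by $(\mathrm{\bar{\,b}}+u\mathrm{\bar{\,B}})\ot id$ as claimed.

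The step I expect to require the most care is verifying that $\tilde d^1$ is well defined on $\widetilde{E}^1$ and is genuinely induced by $(\mathrm{\bar{\,b}}+u\mathrm{\bar{\,B}})\ot id$ rather than by some twisted variant. Here one must check that the operator $\mathrm{\bar{\,b}}$ descends to $\Ho_p(B,C_\bullet(A^\natural_B))$, i.e.\ that it commutes with $\mathrm{b}$ up to sign so as to act on $\mathrm{b}$-homology; this uses that $C_{\bullet,\bullet}$ is a cylindrical module, whence $\mathrm{b}$ and $\mathrm{\bar{\,b}}$ graded-commute and $\mathrm{\bar{\,B}b}=-\mathrm{b\bar{\,B}}$, exactly the relations already exploited in the mixed-complex computation of Section~\ref{33}. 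The term $u\mathrm{T\bar{~B}}$ from $\mathbb{B}$ contributes $u\mathrm{\bar{\,B}}$ at this stage because $\mathrm{T}$ acts trivially on the relevant homology after passing to $E^1$, just as in the row case the factor $\mathrm{T}$ was absorbed. Once this compatibility is in place the lemma follows, and everything else is the routine transpose of the arguments already given for the row filtration, so I would not belabour those computations.
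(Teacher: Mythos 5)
Your proposal is correct and is essentially the paper's own argument: the paper explicitly skips this proof as the mirror of the row-filtration case, and your transposed chain (identify $\widetilde{E}^0_{\bullet,p}$ with $C_p(B,C_{\bullet}(A_B^{\natural}))\boxtimes W$ via the isomorphisms $\phi,\psi$ with $\phi\de=\mathrm{b}\phi$, then apply the column analogue of Lemma~\ref{4.3} together with Corollary~\ref{coro 3.5}) is exactly that, with the added merit that you make explicit the absorption of $\mathrm{T}$ needed because $u\mathrm{T}\bar{~\mathrm{B}}$ lowers the column filtration by exactly $1$. One small correction: your parenthetical that the factor $\mathrm{T}$ ``was absorbed'' in the row case too is inaccurate --- there $u\mathrm{T}\bar{~\mathrm{B}}$ lowers the row filtration by $2$ and never enters $d^1$; the absorption of $\mathrm{T}$ on $\mathrm{b}$-homology via $\mathrm{bB}+\mathrm{Bb}=1-\mathrm{T}$ (Lemma~\ref{2.2}) is a genuinely new feature of the column case, and you handle it correctly.
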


\begin{theorem}\label{lim2}
 The $E^2$-term of the spectral sequence is identified with the cyclic homology
 of the cyclic module $\Ho_{\bullet}\big(B,C_{\bullet}(A_B^{\natural})\big)$
  with coefficients in $W$. It converges to the cyclic homology of
  the strong smash product algebra $A\#_{_R}B$ with coefficients in $W$.
  That is,
$$\widetilde{E}^2_{q,p}
\cong\Hc_q\Big(\Ho_p\big(B,C_{\bullet}(A_B^{\natural})\big);
W\Big)\Rightarrow \Hc_{p+q}(A\#_{_R}B;W).$$
\end{theorem}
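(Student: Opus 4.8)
The plan is to run the proof of Theorem~\ref{lim1} verbatim, interchanging the roles of $A$ and $B$ and replacing the row filtration $F^{\bullet}$ by the column filtration $\widetilde{F}^{\bullet}$; since all the hard analytic work has already been placed in the auxiliary lemmas, what remains is essentially formal. The $\widetilde{E}^1$-page is handed to us by the preceding lemma, namely $\widetilde{E}^1_{\bullet,p}=\Ho_p(B,C_{\bullet}(A_B^{\natural}))\boxtimes W$ with $\tilde{d}^1$ induced by $(\bar{\mathrm{b}}+u\bar{\mathrm{B}})\ot id$. The first step is to read off the $\widetilde{E}^2$-page. By Theorem~\ref{h02} the graded object $\Ho_p(B,C_{\bullet}(A_B^{\natural}))$ is a cyclic module, with cyclic operator the induced $\mathtt{\bar{t}}$, and the degree $-1$ and degree $+1$ endomorphisms attached to this cyclic structure by \eqref{6} are precisely the induced $\bar{\mathrm{b}}$ and $\bar{\mathrm{B}}$. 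Hence $\tilde{d}^1=(\bar{\mathrm{b}}+u\bar{\mathrm{B}})\ot id$ is, by the very definition of cyclic homology with coefficients (the homology of $C_{\bullet}\boxtimes W$ under $(\mathrm{b}+u\mathrm{B})\ot id_W$), the differential computing $\Hc_{\bullet}\big(\Ho_p(B,C_{\bullet}(A_B^{\natural}));W\big)$. Taking $\tilde{d}^1$-homology therefore gives
$$\widetilde{E}^2_{q,p}=\Ho_q\big(\widetilde{E}^1_{\bullet,p},\tilde{d}^1\big)=\Hc_q\big(\Ho_p(B,C_{\bullet}(A_B^{\natural}));W\big),$$
which is the first assertion.

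For convergence I would first check that $\widetilde{F}^{\bullet}$ is bounded in each total degree. Indeed $\widetilde{F}^q_n=0$ for $q<0$, while any summand $B^{\ot(i+1)}\ot A^{\ot(j+1)}u^l$ with $i+j=n+2l$ automatically has $j=n+2l-i\leq n+2l$ because $i\geq0$, so $\widetilde{F}^n_n=\big(\Tot(A\natural B)\boxtimes W\big)_n$. Thus in each total degree the filtration has only finitely many nonzero steps, and the spectral sequence converges strongly to the homology of the total complex regardless of the fact that the individual filtration pieces are infinite power series in $u$. That abutment is $\Ho_{p+q}\big(\Tot(A\natural B)\boxtimes W\big)=\Hc_{p+q}\big(\Tot(A\natural B);W\big)$, and Corollary~\ref{3.2} identifies it with $\Hc_{p+q}(A\#_{_R}B;W)$. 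Combining the two steps yields $\widetilde{E}^2_{q,p}\Rightarrow\Hc_{p+q}(A\#_{_R}B;W)$, as claimed.

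The one point that genuinely uses the cylindrical structure, and which I regard as the crux, is already concealed in the preceding lemma but deserves to be flagged. The full differential on $\Tot(A\natural B)\boxtimes W$ is $\mathbbm{b}+u\mathbb{B}=\mathrm{b}+\bar{\mathrm{b}}+u\mathrm{B}+u\mathrm{T}\bar{\mathrm{B}}$; measuring the filtration degree by $j-2l$, the piece preserving the filtration is $\mathrm{b}$ (giving $\tilde{d}^0$, and hence $\widetilde{E}^1=\Ho_{\bullet}(B,-)\boxtimes W$ after invoking Corollary~\ref{coro 3.5} to commute $\mathrm{b}$-homology past $[[u]]\ot_{k[u]}W$), whereas the pieces dropping the filtration by exactly one are $\bar{\mathrm{b}}$ and $u\mathrm{T}\bar{\mathrm{B}}$. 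To obtain the clean form $\bar{\mathrm{b}}+u\bar{\mathrm{B}}$ asserted on $\widetilde{E}^1$ one must replace $\mathrm{T}$ by the identity, which is exactly legitimate on $\mathrm{b}$-homology: Lemma~\ref{2.2} gives $\mathrm{bB}+\mathrm{Bb}=1-\mathrm{T}$, so $\mathrm{T}$ acts as the identity on $\ker\mathrm{b}/\im\mathrm{b}$ and $u\mathrm{T}\bar{\mathrm{B}}$ collapses to $u\bar{\mathrm{B}}$. Granting this identification, the remainder of the argument is purely homological-algebraic and identical to the row case treated in Theorem~\ref{lim1}.
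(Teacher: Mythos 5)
Your proposal is correct and follows essentially the same route the paper intends for this theorem, whose proof it omits outright (``we skip proofs which are similar as in previous discussions''): read $\widetilde{E}^1$ off the preceding lemma, use Theorem~\ref{h02} to recognize $\tilde{d}^1=(\mathrm{\bar{\,b}}+u\mathrm{\bar{\,B}})\ot id$ as the differential defining cyclic homology with coefficients in $W$, and obtain convergence from the boundedness of the column filtration in each total degree together with Corollary~\ref{3.2}. One point in your write-up is worth highlighting as a genuine improvement on the paper's exposition: your justification that $u\mathrm{T}\bar{\,\mathrm{B}}$ may be replaced by $u\bar{\,\mathrm{B}}$ on the $\widetilde{E}^1$-page, via $\mathrm{bB}+\mathrm{Bb}=1-\mathrm{T}$ from Lemma~\ref{2.2} so that $\mathrm{T}$ acts as the identity on $\mathrm{b}$-homology, is exactly where the column filtration differs from the row filtration of Theorem~\ref{lim1} --- there $u\mathrm{T}\bar{\,\mathrm{B}}$ drops the filtration by two and never enters $d^1$ --- and the paper leaves this asymmetry, needed for its own statement of the $\widetilde{E}^1$-lemma, entirely implicit.
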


 By  Proposition 1.1.13 of \cite{L}, we
can use the derived functor $\Tor$ to express the Hochschild
homology of an algebra $A$ with coefficients in $M$ which is an
$A$-bimodule, that is,
$$\Ho_n(A,M)\cong \Tor_n^{A^e}(M,A),$$ where $A^{e}=A\ot A^{op}$.
For a separable algebra that is projective over its enveloping
algebra, its homology with coefficients in any module is zero.
Hence, the spectral sequence collapses at $E^2$, that is,
$E^2_{p,q}=0$ for all $p,q$ unless $q=0$. So we have
\begin{coro}
If the algebra $A$ $($resp. $B$$)$ is separable, then there is a
natural isomorphism of cyclic homology groups
\begin{gather*}
\Hc_n(A\#_{_R}B;W)\cong \Hc_n\big(C^A_{\bullet}({}_A^{~\natural}B);
W\big),\\
 \text{$($resp.,} ~~\Hc_n(A\#_{_R}B;W)\cong
\Hc_{n}\big(C^B_{\bullet}(A_B^{\natural});W\big)~\text{$)$.}
\end{gather*}
\end{coro}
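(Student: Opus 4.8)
The plan is to deduce the corollary purely formally from the two spectral sequences of Theorem \ref{lim1} and Theorem \ref{lim2}, using separability to force a collapse onto a single line. Suppose first that $A$ is separable. I would invoke the standard fact that a separable $k$-algebra is projective as a module over its enveloping algebra $A^e=A\ot A^{op}$: separability means the multiplication map $A^e\ra A$ splits as a morphism of $A^e$-modules, so $A$ is a direct summand of the free module $A^e$. Combined with the identification $\Ho_q(A,M)\cong\Tor_q^{A^e}(M,A)$ recalled just above the corollary, projectivity of $A$ over $A^e$ gives $\Tor_q^{A^e}(M,A)=0$ for every $A$-bimodule $M$ and every $q>0$. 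Applying this with $M=C_\bullet({}_A^{~\natural}B)$ yields $\Ho_q(A,C_\bullet({}_A^{~\natural}B))=0$ for $q>0$, while Theorem \ref{h01} identifies the surviving row $q=0$ as the co-invariant complex $C^A_\bullet({}_A^{~\natural}B)$.

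Next I would feed this vanishing into the row-filtration spectral sequence of Theorem \ref{lim1}. Since $E^2_{p,q}=\Hc_p(\Ho_q(A,C_\bullet({}_A^{~\natural}B));W)$ vanishes for $q\neq 0$, the $E^2$-page is concentrated in the single row $q=0$. Because every differential $d^r$ with $r\geq 2$ strictly raises the value of $q$ (it goes $E^r_{p,q}\ra E^r_{p-r,q+r-1}$), any such differential out of the row $q=0$ lands in a vanishing row, and any such differential into the row $q=0$ issues from a row with $q<0$; hence all higher differentials are zero and $E^\infty_{p,0}=E^2_{p,0}$. Identifying $E^2_{n,0}=\Hc_n(C^A_\bullet({}_A^{~\natural}B);W)$ then gives the claimed isomorphism once the abutment is pinned down.

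The $B$-separable case is entirely symmetric: I would run the same argument with the column-filtration spectral sequence of Theorem \ref{lim2}, whose $E^2$-term is $\widetilde{E}^2_{q,p}\cong\Hc_q(\Ho_p(B,C_\bullet(A_B^{\natural}));W)$, using separability of $B$ to kill the rows $p>0$ and Theorem \ref{h02} to identify the $p=0$ row as $C^B_\bullet(A_B^{\natural})$. With $p=0$ the total degree is $p+q=q$, so the collapse yields $\Hc_n(A\#_{_R}B;W)\cong\widetilde{E}^2_{n,0}=\Hc_n(C^B_\bullet(A_B^{\natural});W)$.

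I do not expect a genuine obstacle, since the substantive content is already packaged in the preceding theorems; the argument is a routine collapse. The one point that deserves care is the passage from the $E^\infty$-page to the abutment: one must check that concentration of $E^\infty$ in a single line identifies $\Hc_n(A\#_{_R}B;W)$ with that line itself, and not merely with an associated graded object sitting in a possibly nontrivial extension. This is harmless here precisely because, for each total degree $n$, the diagonal $p+q=n$ meets the nonzero line in exactly one lattice point, so the induced filtration on $\Hc_n(A\#_{_R}B;W)$ has a single nonzero graded piece and the extension problem is vacuous.
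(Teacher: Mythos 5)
Your proposal is correct and follows essentially the same route as the paper: the paper likewise uses $\Ho_q(A,M)\cong\Tor_q^{A^e}(M,A)$ together with projectivity of the separable algebra over its enveloping algebra to kill all rows $q>0$, so that the spectral sequence of Theorem \ref{lim1} (resp.\ \ref{lim2}) collapses at $E^2$ onto the line identified by Theorem \ref{h01} (resp.\ \ref{h02}) with the co-invariant cyclic module. Your added care about the splitting argument for separability and the vacuity of the extension problem only fills in details the paper leaves implicit.
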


From the above results, one can observe that our theorems take
advantage of good homological property of either of two subalgebras.
Even in the case of the crossed product algebra $A\rtimes H$, where
$H$ is a Hopf algebra with invertible antipode and $A$ is an
$H$-module algebra, the ``nice'' homological property of $A$
sometimes will play a key role in computing the cyclic homology of
the crossed product, by comparison with the homological property of
$H$ being weak. We will illustrate this point by examples in the
next section.

\section{\label{55}Examples}
 \noindent \textbf{5.1} \ In this subsection, we apply our theorems to Majid's double
crossproduct of Hopf algebras which is inspired by  bismash product
of groups defined by Takeuchi \cite{T}. Bismash product of groups
 is a generalization of semiproduct of groups. In order to
define this product, he provided the notion of a matched pair of
groups. Given a matched pair of groups $(G,K)$, the bismash product
of $G$ and $K$ denoted by $G\bowtie K$ is still a group.

 The theory is developed by Majid \cite{Ma}. He defined a matched pair
 of Hopf algebras and constructed a product Hopf algebra which he called a double
crossproduct of Hopf algebras. Using this new definition he provided
another way to construct Drinfeld's quantum double. We start by
recalling the definition due to Majid \cite{Ma}.

\begin{defi}[\cite{Ma}]  A pair $(B,H)$ of Hopf algebras is said to
be \textit{matched} if $B$ is a left $H$-module coalgebra via
$\alpha$, and $H$ is a right $B$-module coalgebra via $\beta$,
$$\alpha:H\ot B\ra B,\quad \alpha(h\ot b)=h\rhd b,
\quad \beta:H\ot B\ra H,\quad\beta(h\ot b)=h\lhd b,$$ such that
 the following equalities hold for $\forall ~b,c\in B, h,g\in H$.
\begin{align}
h\rhd 1_{_B}=\ep_{_H}(h)1_{_B},&\quad h\rhd(bc)=\sum
\Big(h_{(1)}\rhd
b_{(1)}\Big)\Big((h_{(2)}\lhd b_{(2)})\rhd c\Big)\label{m1}\\
1_{_H}\lhd b=1_{_H}\ep_{_B}(b),&\quad (hg)\lhd b=\sum \Big(h\lhd
(g_{(1)}\rhd b_{(1)})\Big)\Big(g_{(2)}\lhd b_{(2)}\Big)\label{m2}\\
\sum h_{(1)}\lhd b_{(1)}\ot & h_{(2)}\rhd b_{(2)}= h_{(2)}\lhd
b_{(2)}\ot h_{(1)}\rhd b_{(1)}\label{m3}.
\end{align}
The \textit{double crossproduct}  $B\bowtie H$ is a Hopf algebra
equipped with
\begin{align*}
(b\ot h)(c\ot g)&=b(h_{(1)}\rhd c_{(1)})\ot (h_{(2)}\lhd c_{(2)})g\\
\Delta(b\ot h)&=b_{(1)}\ot h_{(1)}\ot b_{(2)}\ot h_{(2)}\\
\ep(b\ot h)&=\ep_{_{B}}(b)\ep_{_{H}}(h)\\
S(b\ot h)&=(1\ot S_{_H}h)(S_{_B}b\ot 1).
\end{align*}
\end{defi}

The double crossproduct of Hopf algebras relates closely to the
smash product algebra.

\begin{prop}\label{5.1}
Let $(B,H)$ be  a matched pair of Hopf algebras. If  $H$ and $B$
have invertible antipodes, then the double crossproduct of $B$ and
$H$ denoted by $B\bowtie H$ is a strong smash product algebra. In
particular, the group algebra of the bismash product of a matched
pair of groups is a strong smash product algebra.
\end{prop}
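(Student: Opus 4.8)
The plan is to realize the double crossproduct $B\bowtie H$ as a smash product $B\#_{_R}H$, with $B$ playing the role of the first factor $A$ and $H$ that of the second factor, for the linear map $R\colon H\ot B\ra B\ot H$ defined by
$$R(h\ot c)=(h_{(1)}\rhd c_{(1)})\ot(h_{(2)}\lhd c_{(2)}),\qquad h\in H,\ c\in B.$$
Reading off Majid's multiplication one sees at once that $(b\ot h)(c\ot g)=b\,R(h\ot c)\,g$, so the two algebra structures on $B\ot H$ coincide; equivalently, $R(h\ot c)=(1\ot h)(c\ot 1)$ when computed inside $B\bowtie H$. It then remains to verify that $R$ is normal, quasitriangular, and invertible.

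Normality follows directly from the unit conditions. Using $\De(1_H)=1_H\ot 1_H$, the module unit axiom $1_H\rhd c=c$, and the first equality of \eqref{m2}, one gets $R(1_H\ot c)=c_{(1)}\ot 1_H\,\ep_{_B}(c_{(2)})=c\ot 1_H$; dually, $\De(1_B)=1_B\ot 1_B$, the first equality of \eqref{m1}, and $h\lhd 1_B=h$ give $R(h\ot 1_B)=1_B\ot h$. For quasitriangularity I would expand the two defining identities and match both sides in Sweedler notation: after using the module and module-coalgebra properties $\De(h\rhd c)=h_{(1)}\rhd c_{(1)}\ot h_{(2)}\rhd c_{(2)}$ (and its analogue for $\lhd$), the identity $R\circ(m_H\ot\id)=(\id\ot m_H)R_{12}R_{23}$ and the identity $R\circ(\id\ot m_B)=(m_B\ot\id)R_{23}R_{12}$ reduce to the matched-pair compatibilities \eqref{m1}, \eqref{m2} and \eqref{m3}. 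These are routine if tedious computations.

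The essential point, and the main obstacle, is the invertibility of $R$: the matched-pair axioms alone do not force it, and this is exactly where the hypothesis of invertible antipodes enters. Rather than inverting $R$ by hand, I would exploit the antipode $S$ of $X:=B\bowtie H$. Since $R(h\ot c)=(1\ot h)(c\ot 1)$ and $S$ is an algebra anti-homomorphism, the explicit formula $S(b\ot h)=(1\ot S_{_H}h)(S_{_B}b\ot 1)$ yields $S(c\ot 1)=S_{_B}c\ot 1$ and $S(1\ot h)=1\ot S_{_H}h$, whence
$$S\big(R(h\ot c)\big)=S(c\ot 1)\,S(1\ot h)=(S_{_B}c\ot 1)(1\ot S_{_H}h)=S_{_B}c\ot S_{_H}h.$$
In other words $S\circ R=(S_{_B}\ot S_{_H})\circ\mathrm{f}$, where $\mathrm{f}\colon H\ot B\ra B\ot H$ is the flip. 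When $S_{_B}$ and $S_{_H}$ are invertible the right-hand map is a bijection, and $S$ itself is bijective on $X$ (a double crossproduct of Hopf algebras with invertible antipodes again has an invertible antipode); hence $R=S^{-1}\circ(S_{_B}\ot S_{_H})\circ\mathrm{f}$ is a composite of bijections, and so is invertible. Unwinding this also produces the explicit inverse
$$R^{-1}(c\ot h)=S_{_H}^{-1}\big(S_{_H}h_{(1)}\lhd S_{_B}c_{(1)}\big)\ot S_{_B}^{-1}\big(S_{_H}h_{(2)}\rhd S_{_B}c_{(2)}\big).$$

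With $R$ quasitriangular, normal and invertible, Proposition \ref{1.2} transfers these properties to $R^{-1}$, and therefore $B\bowtie H=B\#_{_R}H$ is a strong smash product algebra. Finally, for a matched pair of groups $(G,K)$ the group algebra $k[G\bowtie K]$ is precisely the double crossproduct of the cocommutative Hopf algebras $kK$ and $kG$, whose antipodes $g\mapsto g^{-1}$ are invertible, so the group case is subsumed. I expect the only genuinely delicate step to be the invertibility argument above; the normality and quasitriangularity checks are direct translations of the matched-pair axioms.
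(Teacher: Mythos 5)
Your choice of $R$, and your treatment of normality, quasitriangularity, and the group-algebra case, all agree with the paper's proof. The gap is at exactly the step you yourself single out as ``the essential point'': invertibility of $R$. Your argument rests on the parenthetical claim that the antipode $S$ of $X=B\bowtie H$ is bijective, but this claim is not an independent known fact you can lean on --- it is \emph{equivalent} to the invertibility of $R$, so assuming it is circular. Indeed, from the multiplication rule one computes
$$S(b\ot h)=(1\ot S_{_H}h)(S_{_B}b\ot 1)=R\big(S_{_H}h\ot S_{_B}b\big),
\qquad\text{i.e.}\qquad
S=R\circ(S_{_H}\ot S_{_B})\circ \mathrm{f}',$$
where $\mathrm{f}'\colon B\ot H\ra H\ot B$ is the flip. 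Since $S_{_H}\ot S_{_B}$ and $\mathrm{f}'$ are bijections, $S$ is bijective \emph{if and only if} $R$ is; your relation $S\circ R=(S_{_B}\ot S_{_H})\circ\mathrm{f}$ is the same identity read in the other order. So ``a double crossproduct of Hopf algebras with invertible antipodes again has an invertible antipode'' is precisely the statement to be proved, and your proposal never proves it.

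The paper closes this gap by explicit construction: it first establishes the two identities \eqref{m4} and \eqref{m5} for the inverse antipodes, namely $S_B^{-1}(h\rhd b)=(h\lhd b_{(2)})\rhd S_B^{-1}(b_{(1)})$ and $S_H^{-1}(h\lhd b)=S_H^{-1}(h_{(2)})\lhd (h_{(1)}\rhd b)$, then writes down the candidate inverse
$$r(b\ot h)=h_{(3)}\lhd \big(S_H^{-1}(h_{(2)})\rhd S_B^{-1}(b_{(3)})\big)\ot \big(S_H^{-1}(h_{(1)})\lhd S_B^{-1}(b_{(2)})\big)\rhd b_{(1)},$$
and verifies $R\circ r=\id$ and $r\circ R=\id$ by direct Sweedler computations using \eqref{m1}--\eqref{m5}. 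Your route could be repaired, but only by supplying an independent proof that $S$ is bijective; for instance, axiom \eqref{m3} implies that $(B^{cop},H^{cop})$ with the same actions is again a matched pair (and $B^{cop}$, $H^{cop}$ are Hopf algebras precisely because $S_{_B}$, $S_{_H}$ are invertible), so $X^{cop}\cong B^{cop}\bowtie H^{cop}$ is a Hopf algebra, and a bialgebra whose co-opposite admits an antipode has invertible antipode. With such an argument inserted, your derivation of $R^{-1}=\mathrm{f}^{-1}\circ(S_{_B}^{-1}\ot S_{_H}^{-1})\circ S$ becomes a genuinely different, computation-free alternative to the paper's verification; as written, however, the proof defers its only nontrivial point to an unproven claim equivalent to the goal.
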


We  need the following lemma in the proof of Proposition \ref{5.1}.
\begin{lemma} Let $(B,H)$ be a matched pair of Hopf algebras. If
both $H$ and $B$ have invertible antipodes, then we have the
following identities:
\begin{align}
\label{m4}S_B^{-1}(h\rhd b)=(h\lhd b_{(2)})\rhd S_B^{-1}(b_{(1)}),\\
\label{m5}S_H^{-1}(h\lhd b)=S_H^{-1}(h_{(2)})\lhd (h_{(1)}\rhd b).
\end{align}
\end{lemma}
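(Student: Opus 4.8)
The plan is to prove both identities by a convolution-inverse uniqueness argument, first establishing the companion $S$-identities and then transporting them to the inverse antipodes. The conceptual source is that these are the ``cross'' antipode relations inside $B\bowtie H$, but the cleanest tool is convolution in $\Hom(H\ot B,B)$ and $\Hom(H\ot B,H)$, where $H\ot B$ carries the tensor-product coalgebra structure.

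First I would work in $\Hom(H\ot B,B)$ with product $(f\ast g)(x)=\sum f(x_{(1)})\,g(x_{(2)})$ (multiplication in $B$, unit $\eta_B\ep$), and set $\alpha(h\ot b)=h\rhd b$. The module-coalgebra property $\De(h\rhd b)=(h_{(1)}\rhd b_{(1)})\ot(h_{(2)}\rhd b_{(2)})$ together with the antipode axioms of $B$ shows that $S_B\circ\alpha$ is a two-sided convolution inverse of $\alpha$. Applying $h\rhd(-)$ to $b_{(1)}S_B(b_{(2)})=\ep(b)1$, expanding by \eqref{m1}, and using $h\rhd 1_B=\ep(h)1_B$ gives
\[
\sum(h_{(1)}\rhd b_{(1)})\big((h_{(2)}\lhd b_{(2)})\rhd S_B(b_{(3)})\big)=\ep(h)\ep(b)1_B,
\]
so that $N(h\ot b)=(h\lhd b_{(1)})\rhd S_B(b_{(2)})$ is a right convolution inverse of $\alpha$. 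Uniqueness of inverses in the convolution monoid forces $N=S_B\circ\alpha$, i.e. $S_B(h\rhd b)=(h\lhd b_{(1)})\rhd S_B(b_{(2)})$. The mirror computation in $\Hom(H\ot B,H)$, expanding $\big(S_H(h_{(1)})h_{(2)}\big)\lhd b=\ep(h)\ep(b)1_H$ by \eqref{m2}—with the antipode on the \emph{first} leg, so the Sweedler indices remain adjacent and the result reads off as a clean one-sided inverse—produces the companion $S_H(h\lhd b)=S_H(h_{(1)})\lhd(h_{(2)}\rhd b)$.

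Next I would pass to the inverse antipodes. Since $S_B,S_H$ are invertible, $S_B^{-1},S_H^{-1}$ are precisely the antipodes of the co-opposite Hopf algebras $B^{\mathrm{cop}},H^{\mathrm{cop}}$. The content of \eqref{m3} is exactly the compatibility making $(B^{\mathrm{cop}},H^{\mathrm{cop}})$ a matched pair for the same actions $\rhd,\lhd$: it permits the swap of $(h_{(1)}\rhd b_{(1)})\ot(h_{(2)}\lhd b_{(2)})$ with $(h_{(2)}\rhd b_{(2)})\ot(h_{(1)}\lhd b_{(1)})$, converting \eqref{m1} and \eqref{m2} into their co-opposite forms. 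Rerunning the uniqueness argument of the previous paragraph for this co-opposite matched pair, and reading $\De^{\mathrm{cop}}b=b_{(2)}\ot b_{(1)}$, turns $S_B(h\rhd b)=(h\lhd b_{(1)})\rhd S_B(b_{(2)})$ into exactly \eqref{m4} and $S_H(h\lhd b)=S_H(h_{(1)})\lhd(h_{(2)}\rhd b)$ into exactly \eqref{m5}, with the Sweedler indices in the swapped positions the statement demands.

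The main obstacle is this second stage: checking that $(B^{\mathrm{cop}},H^{\mathrm{cop}})$ really is a matched pair and that the reversed coproducts deposit the indices in precisely the places of \eqref{m4} and \eqref{m5}. This is where \eqref{m3} is indispensable; it is also why one cannot simply substitute $S_B^{-1}$ into the original convolution algebra, for $S_B^{-1}\circ\alpha$ fails to be a convolution inverse there—the $S^{-1}$-antipode identity holds only for the opposite order of the coproduct. Once the co-opposite bookkeeping is in place the uniqueness step is verbatim the first stage. Equivalently, and avoiding co-opposite language, I may rerun the convolution argument for $S_B^{-1}$ (resp. $S_H^{-1}$) in $\Hom\big((H\ot B)^{\mathrm{cop}},B\big)$ (resp. $\Hom\big((H\ot B)^{\mathrm{cop}},H\big)$), where these inverse antipodes are genuine convolution inverses of the action maps.
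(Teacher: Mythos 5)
Your proposal is correct, and it rests on the same engine as the paper's proof --- uniqueness of convolution inverses in a setting where $S_B^{-1}$ and $S_H^{-1}$ are honest antipodes --- but the route is organized genuinely differently. The paper proves \eqref{m4} in one step: it passes to the opposite Hopf algebra $(B^{op},\eta,m^{op},\Delta,\ep_B,S_B^{-1})$, defines the candidate operator $T(h\rhd b)=(h\lhd b_{(2)})\rhd S_B^{-1}(b_{(1)})$ outright, verifies by a direct computation --- using \eqref{m1} and \eqref{m3} inline --- that $T$ is a one-sided convolution inverse of the action map, and concludes by uniqueness that $T$ coincides with $S_B^{-1}$ composed with the action; \eqref{m5} is obtained by the same method. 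You instead factor the proof into two stages: first the plain-antipode identities $S_B(h\rhd b)=(h\lhd b_{(1)})\rhd S_B(b_{(2)})$ and $S_H(h\lhd b)=S_H(h_{(1)})\lhd(h_{(2)}\rhd b)$, which use only \eqref{m1}, \eqref{m2} and the module-coalgebra axioms (no \eqref{m3}, no invertibility), and then a structural transport through the co-opposite matched pair $(B^{\mathrm{cop}},H^{\mathrm{cop}})$, which is the only place \eqref{m3} enters --- it is indeed exactly the axiom making the same actions matched for the co-opposite coalgebras, and your check goes through, landing the Sweedler indices precisely where \eqref{m4} and \eqref{m5} demand. Your factorization costs the extra verification of the co-opposite matched-pair axioms (which you rightly flag as the main obstacle), but it buys a cleaner separation of the roles of the hypotheses, yields the $S$-identities as a byproduct valid without invertible antipodes, and delivers both \eqref{m4} and \eqref{m5} from a single template; the paper's route is shorter, one computation per identity, at the price of entangling \eqref{m3} with the antipode manipulations. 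Note finally that your closing ``equivalent formulation'' in $\Hom\big((H\ot B)^{\mathrm{cop}},B\big)$ is, up to reversing the multiplication rather than the comultiplication, essentially the paper's actual setup.
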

\begin{proof}
Since $(B,\eta,m,\Delta,\ep_{_B},S_{_B})$ is a Hopf algebra, then
$(B^{op},\eta,m^{op},\Delta,\ep_{_B},S_B^{-1})$ is also a Hopf
algebra. Denote the convolution map on $\Hom(B^{op},B^{op})$ by
$\ast'$. Define the operator $T\in \textrm{End}_{k}(H\rhd B)$ by
$$
T(h\rhd b):=(h\lhd b_{(2)})\rhd S_B^{-1}(b_{(1)}).
$$
We should
check that
\begin{align*}
(id\ast' T)(h\rhd b)=\ep_B(h\rhd b)1_B\quad \text{and} \quad (T\ast'
id)(h\rhd b)=\ep_B(h\rhd b)1_B.
\end{align*}
Actually, we only need to check the first equality. Indeed, if it
holds, then
\begin{align*}
T(h\rhd b)&=(\ep_B\ast'T)(h\rhd b)=((S_B^{-1}\ast' id)\ast' T)(h\rhd
b)\\&=(S_B^{-1}\ast' (id\ast' T))(h\rhd b)
=(S_B^{-1}\ast'\ep_B)(h\rhd b)=S_B^{-1}(h\rhd b).
\end{align*}
From \eqref{m1} and \eqref{m3}, we have
\begin{align*}
(id\ast' T)(h\rhd b)&=\big(h_{(2)}\rhd b_{(2)}\big)T(h_{(1)}\rhd
b_{(1)})\\
&=\big(h_{(2)}\rhd b_{(3)}\big)\big((h_{(1)}\lhd b_{(2)})\rhd
S_B^{-1}(b_{(1)})\big)\\
&=\big(h_{(1)}\rhd b_{(2)}\big)\big((h_{(2)}\lhd b_{(3)})\rhd
S_B^{-1}(b_{(1)})\big)\\
&=h\rhd (b_{(2)}S_B^{-1}(b_{(1)}))=\ep_B(b)h\rhd 1_B
=\ep_H(h)\ep_{_B}(b)1_B\\
&=\ep_B(h\rhd b)1_B.
\end{align*}
Using the same method, we can prove \eqref{m5}.\end{proof}
\begin{proof}[Proof of Proposition \ref{5.1}] We need to construct
an isomorphism $R$ from $H\ot B$ to $ B\ot H$, which is
quasitriangular and normal. For $b\in B, h\in H$, set
$$
R(h\ot
b)=\sum h_{(1)}\rhd b_{(1)}\ot  h_{(2)}\lhd b_{(2)}.
$$

1) $R$ is quasitriangular: $\forall~h,g\in H,b\in B$,
\begin{align*}
(id\ot m)R_{12}R_{23}(h\ot g\ot b)&=\sum (id\ot m)R_{12}\big(h\ot
g_{(1)}\rhd b_{(1)}\ot g_{(2)}\lhd b_{(2)}\big)\\
&=h_{(1)}\rhd (g_{(1)}\rhd b_{(1)})_{(1)}\ot \Big( h_{(2)}\lhd
(g_{(1)}\rhd b_{(1)})_{(2)}\Big)\Big(g_{(2)}\lhd b_{(2)}\Big)\\
&=h_{(1)}\rhd (g_{(1)}\rhd b_{(1)})\ot \Big( h_{(2)}\lhd
(g_{(2)}\rhd b_{(2)})\Big)\Big(g_{(3)}\lhd b_{(3)}\Big)\\
&=\sum (h_{(1)}g_{(1)})\rhd b_{(1)}\ot (h_{(2)}g_{(2)})\lhd
b_{(2)}=R(hg\ot b).
\end{align*}
The third equality holds due to the $H$-module coalgebra structure
of $B$, the forth equality holds because of \eqref{m2}. Similarly,
one can prove that $R\circ (id\ot m)=(m\ot id)R_{23}R_{12}$.

2) $R$ is normal: $\forall~ h\in H$,
\begin{align*}
R(h\ot 1_{_B})=\sum h_{(1)}\rhd 1_{_B}\ot  h_{(2)}\lhd
1_{_B}=\ep(h_{(1)})1_{_B}\ot h_{(2)}=1_{_B}\ot h.
\end{align*}
Similarly, one can prove that $R(1_{_H}\ot b)=b\ot 1_{_H}$.

3) $R$ is invertible: For $\forall ~b\in B, h\in H$,  set $r: B\ot
H\ra H\ot B$
$$
r(b\ot h):=h_{(3)}\lhd \big(S_H^{-1}(h_{(2)})\rhd
S_B^{-1}(b_{(3)})\big) \ot \big(S_H^{-1}(h_{(1)})\lhd
S_B^{-1}(b_{(2)})\big)\rhd b_{(1)}.
$$
We claim that $r$ is the inverse of $R$.
\begin{align*}
R\circ r(b\ot h)&=R\big(h_{(3)}\lhd \big(S_H^{-1}(h_{(2)})\rhd
S_B^{-1}(b_{(3)})\big)
\ot \big(S_H^{-1}(h_{(1)})\lhd S_B^{-1}(b_{(2)})\big)\rhd b_{(1)}\big)\\
&=\Big(h_{(3)}\lhd \big(S_H^{-1}(h_{(2)})\rhd
S_B^{-1}(b_{(3)})\big)\Big)_{(1)} \rhd
\Big(\big(S_H^{-1}(h_{(1)})\lhd S_B^{-1}(b_{(2)})\big)\rhd
b_{(1)}\Big)_{(1)}\\
&~\ot\Big(h_{(3)}\lhd \big(S_H^{-1}(h_{(2)})\rhd
S_B^{-1}(b_{(3)})\big)\Big)_{(2)}
\lhd \Big(\big(S_H^{-1}(h_{(1)})\lhd S_B^{-1}(b_{(2)})\big)\rhd b_{(1)}\Big)_{(2)}\\
&=\Big(h_{(5)}\lhd \big(S_H^{-1}(h_{(4)})\rhd
S_B^{-1}(b_{(6)})\big)\Big) \rhd \Big(\big(S_H^{-1}(h_{(2)})\lhd
S_B^{-1}(b_{(4)})\big)\rhd
b_{(1)}\Big)\\
&~\ot\Big(h_{(6)}\lhd \big(S_H^{-1}(h_{(3)})\rhd
S_B^{-1}(b_{(5)})\big)\Big)
\lhd \Big(\big(S_H^{-1}(h_{(1)})\lhd S_B^{-1}(b_{(3)})\big)\rhd b_{(2)}\Big)\\
&=\Big(\big(h_{(5)}\lhd (S_H^{-1}(h_{(4)})\rhd
S_B^{-1}(b_{(6)}))\big) (S_H^{-1}(h_{(2)})\lhd
S_B^{-1}(b_{(4)}))\Big)\rhd
b_{(1)}\\
&~\ot h_{(6)}\lhd \Big( (S_H^{-1}(h_{(3)})\rhd S_B^{-1}(b_{(5)}))
\big((S_H^{-1}(h_{(1)})\lhd S_B^{-1}(b_{(3)}))\rhd b_{(2)}\big)\Big)\\
&=\Big(\big(h_{(5)}\lhd (S_H^{-1}(h_{(4)})\rhd
S_B^{-1}(b_{(6)}))\big)
  (S_H^{-1}(h_{(3)})\lhd S_B^{-1}(b_{(5)}))\Big)\rhd
b_{(1)}\\
&~\ot h_{(6)}\lhd \Big((S_H^{-1}(h_{(2)})\rhd S_B^{-1}(b_{(4)}))
 \big((S_H^{-1}(h_{(1)})\lhd S_B^{-1}(b_{(3)}))\rhd b_{(2)}\big)\Big)\\
 &=\Big((h_{(3)}S_H^{-1}(h_{(2)}))\lhd S_B^{-1}(b_{(4)})\Big)\rhd b_{(1)}\ot
 h_{(4)}\lhd\Big(S_H^{-1}(h_{(1)})\rhd (S_B^{-1}(b_{(3)})b_{(2)})\Big)\\
 &=(1_H\lhd S_B^{-1}(b_{(2)}))\rhd b_{(1)}\ot h_{(2)}\lhd(S_H^{-1}(h_{(1)})\rhd
 1_B)\\
 &=b\ot h.
 \end{align*}
The third and the forth equalities above are due to the $B$-module
coalgebra structure of $H$ and the $H$-module coalgebra structure of
$B$. The fifth equality is due to \eqref{m3}. The sixth and the last
equalities hold because of \eqref{m1} and \eqref{m2}.
\begin{align*}
r\circ R(h\ot b)&=r(\sum h_{(1)}\rhd b_{(1)}\ot  h_{(2)}\lhd
b_{(2)})\\
&=(h_{(2)}\lhd b_{(2)})_{(3)}\lhd \Big(S_H^{-1}\big((h_{(2)}\lhd
b_{(2)})_{(2)}\big)\rhd S_B^{-1}\big((h_{(1)}\rhd b_{(1)})_{(3)}\big)\Big) \\
&~\ot \Big(S_H^{-1}\big((h_{(2)}\lhd b_{(2)})_{(1)}\big)\lhd
S_B^{-1}\big((h_{(1)}\rhd
b_{(1)})_{(2)}\big)\Big)\rhd (h_{(1)}\rhd b_{(1)})_{(1)}\\
&=h_{(6)}\lhd  b_{(6)}\Big(S_H^{-1}(h_{(5)}\lhd
b_{(5)})\rhd S_B^{-1}(h_{(3)}\rhd b_{(3)})\Big) \\
&~\ot \Big(S_H^{-1}(h_{(4)}\lhd b_{(4)})\lhd S_B^{-1}(h_{(2)}\rhd
b_{(2)})\Big)h_{(1)} \rhd b_{(1)}\\
&=h_{(6)}\lhd  b_{(6)}\Big(S_H^{-1}(h_{(5)}\lhd
b_{(5)})\rhd S_B^{-1}( h_{(4)}\rhd b_{(4)})\Big) \\
&~\ot \Big(S_H^{-1}( h_{(3)}\lhd b_{(3)}  )\lhd S_B^{-1}(h_{(2)}\rhd
b_{(2)})\Big)h_{(1)} \rhd b_{(1)}\\
&=h_{(7)}\lhd b_{(7)}\Big(S_H^{-1}(h_{(6)}\lhd b_{(6)})(h_{(5)}\lhd
b_{(5)})\rhd S_B^{-1}(b_{(4)})\Big)\\
&~\ot\Big( S_H^{-1}(h_{(4)})\lhd (h_{(3)}\rhd
b_{(3)})S_B^{-1}(h_{(2)}\rhd b_{(2)})\Big)h_{(1)} \rhd
b_{(1)}\\
&=h_{(3)}\lhd b_{(3)}S_B^{-1}(b_{(2)})\ot
S_H^{-1}(h_{(2)})h_{(1)} \rhd b_{(1)}\\
&=h\ot b.
\end{align*}
The fifth equality above holds because of \eqref{m4} and \eqref{m5}.

For $(G,K)$ a matched pair of groups, since $k[G\bowtie
K]=k[G]\bowtie k[K]$, it is a strong smash product algebra.
\end{proof}

Therefore, thanks to Theorems \ref{lim1} and  \ref{lim2}, we have
\begin{coro}\label{bh}
Given a matched pair of Hopf algebra $(B,H)$, if each of $B$ and $H$
has an invertible antipode, then
\begin{gather*}
\Hc_p\Big(\Ho_q\big(B,C_{\bullet}({}_B^{~\natural}H)\big);W\Big)\Rightarrow
\Hc_{p+q}(B\bowtie H;W),\\
\Hc_q\Big(\Ho_p\big(H,C_{\bullet}(B_H^{\natural})\big);W\Big)\Rightarrow
\Hc_{p+q}(B\bowtie H;W).
\end{gather*}
\end{coro}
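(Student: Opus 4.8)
The plan is to recognize the corollary as the specialization of Theorems~\ref{lim1} and~\ref{lim2} to the algebra $B\bowtie H$, once Proposition~\ref{5.1} has been invoked. So the first step is to appeal to Proposition~\ref{5.1}: since both $B$ and $H$ are assumed to have invertible antipodes, the double crossproduct $B\bowtie H$ coincides, as an algebra, with the strong smash product $B\#_{_R}H$, where $R\colon H\ot B\ra B\ot H$ is the quasitriangular, normal and invertible map $R(h\ot b)=\sum h_{(1)}\rhd b_{(1)}\ot h_{(2)}\lhd b_{(2)}$ exhibited in the proof of that proposition.

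Next I would match the notation of the general theory to the present situation. In the general strong smash product $A\#_{_R}B$ the structure map runs $R\colon B\ot A\ra A\ot B$; here the roles are played by the first factor $B$ (in place of $A$) and the second factor $H$ (in place of $B$), so that the two distinguished paracyclic modules $C_{\bullet}({}_A^{~\natural}B)$ and $C_{\bullet}(A_B^{\natural})$ become $C_{\bullet}({}_B^{~\natural}H)$ and $C_{\bullet}(B_H^{\natural})$ respectively. With this dictionary in place, Theorem~\ref{lim1} yields the spectral sequence $\Hc_p(\Ho_q(B,C_{\bullet}({}_B^{~\natural}H));W)\Rightarrow\Hc_{p+q}(B\#_{_R}H;W)$, and Theorem~\ref{lim2} yields $\Hc_q(\Ho_p(H,C_{\bullet}(B_H^{\natural}));W)\Rightarrow\Hc_{p+q}(B\#_{_R}H;W)$. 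Substituting $B\#_{_R}H=B\bowtie H$ then gives precisely the two asserted convergences.

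The only point requiring care---and hence the sole place where the hypotheses genuinely enter---is the verification that the invertibility of the antipodes of $B$ and $H$ is exactly what guarantees that $R$ is bijective, i.e.\ that $B\bowtie H$ is \emph{strong} rather than merely a smash product. This is the content of Proposition~\ref{5.1}, whose proof constructs the explicit inverse $r$ of $R$ out of $S_B^{-1}$ and $S_H^{-1}$; once this is in hand there is no further obstacle, since the two spectral sequences are immediate applications of the already-established Theorems~\ref{lim1} and~\ref{lim2}. The last sentence of Proposition~\ref{5.1} also covers the group-algebra case $k[G\bowtie K]=k[G]\bowtie k[K]$, so the same pair of spectral sequences applies verbatim to bismash products of matched pairs of groups.
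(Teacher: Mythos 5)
Your proposal is correct and follows exactly the paper's own route: the paper derives this corollary immediately from Proposition~\ref{5.1} (which makes $B\bowtie H$ a strong smash product algebra via the explicit invertible $R$ built from $S_B^{-1}$ and $S_H^{-1}$) together with Theorems~\ref{lim1} and~\ref{lim2}, under the same substitution of $B$ for $A$ and $H$ for $B$ in the general theory. Your additional emphasis that antipode invertibility is precisely what makes $R$ bijective is a faithful reading of where the hypotheses enter, so nothing is missing.
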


For a finite group $G$ and an arbitrary $G$-bimodule $M$, since
$k[G]$ is semisimple, $\Ho_n(k[G],M)$ is $0$  for all $n$ except for
$n=0$. Then by the above corollary, Theorems \ref{h01} and
\ref{h02}, we have
\begin{coro}\label{gk}
Given a matched pair of finite groups $(G,K)$, then
\begin{gather*}
\Hc_{n}(k[G\bowtie K];W)\cong
\Hc_n(C^{G}_{\bullet}({}_{G}^{~\natural}K);W),\\
\Hc_{n}(k[G\bowtie K];W)\cong
\Hc_n(C^{K}_{\bullet}({G}_{K}^{\natural});W).
\end{gather*}
\end{coro}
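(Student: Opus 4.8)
The plan is to specialize the two spectral sequences of Corollary~\ref{bh} to the matched pair $(k[G],k[K])$ and to kill their higher rows using the semisimplicity of finite group algebras. First I would record, as in the proof of Proposition~\ref{5.1}, that $k[G\bowtie K]=k[G]\bowtie k[K]$, so that taking $B=k[G]$ and $H=k[K]$ in Corollary~\ref{bh} gives the spectral sequence
$$E^2_{p,q}=\Hc_p\Big(\Ho_q\big(k[G],C_{\bullet}({}_G^{~\natural}K)\big);W\Big)\Rightarrow \Hc_{p+q}(k[G\bowtie K];W).$$
Since $k\supseteq\mathbb{Q}$ and $G$ is finite, Maschke's theorem makes $k[G]$ semisimple, hence separable and projective over its enveloping algebra; by the vanishing of Hochschild homology recorded just before the statement, $\Ho_q(k[G],M)=0$ for every $k[G]$-bimodule $M$ and every $q>0$. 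Applying this to the bimodules $M=C_p({}_G^{~\natural}K)$ from Lemma~\ref{4.1} shows that the $E^2$-page is concentrated in the single row $q=0$.

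Next I would invoke the standard degeneration of a one-row spectral sequence. For $r\geq 2$ the outgoing differential $d^r\colon E^r_{p,0}\to E^r_{p-r,\,r-1}$ lands in a term with second index $r-1\geq 1$, hence zero, while the incoming differential originates from $E^r_{p+r,\,1-r}$ with second index $1-r\leq -1$, hence zero; therefore $E^2_{p,0}=E^\infty_{p,0}$ and the filtration on the abutment has a single nonzero quotient. This yields
$$\Hc_n(k[G\bowtie K];W)\cong E^2_{n,0}=\Hc_n\big(\Ho_0(k[G],C_{\bullet}({}_G^{~\natural}K));W\big).$$
Finally, Theorem~\ref{h01} identifies the zeroth Hochschild homology with the co-invariant cyclic module, $\Ho_0(k[G],C_{\bullet}({}_G^{~\natural}K))=C^G_{\bullet}({}_G^{~\natural}K)$, which gives the first isomorphism.

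The second isomorphism follows verbatim by symmetry: I would instead use the second spectral sequence of Corollary~\ref{bh}, built from the bottom row with $H=k[K]$, apply the semisimplicity of $k[K]$ to collapse it onto the line $p=0$, and identify $\Ho_0(k[K],C_{\bullet}(G_K^{\natural}))=C^K_{\bullet}(G_K^{\natural})$ via Theorem~\ref{h02}. There is no serious obstacle here, as the construction, convergence, and cyclic-module structure have all been supplied by Corollary~\ref{bh} and Theorems~\ref{h01} and~\ref{h02}; the only point demanding care is the collapse argument, namely confirming that a spectral sequence supported on a single line carries no surviving higher differentials and no nontrivial extension problems, so that the lone row of the $E^2$-page computes the abutment outright. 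The corollary thus reduces to this bookkeeping together with Maschke's theorem.
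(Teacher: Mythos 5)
Your proposal is correct and follows essentially the same route as the paper: the authors likewise specialize Corollary~\ref{bh} to $k[G\bowtie K]=k[G]\bowtie k[K]$, invoke semisimplicity (hence separability) of the finite group algebras to make $\Ho_q(k[G],M)$ vanish for $q>0$, so each spectral sequence is concentrated in a single row and collapses at $E^2$, and then identify the surviving row via Theorems~\ref{h01} and~\ref{h02} as the cyclic homology of the co-invariant cyclic modules. Your explicit justification of the one-row degeneration and the trivial extension problem only spells out what the paper treats as standard bookkeeping in the paragraph preceding its separable-algebra corollary.
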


If $H$ is a finite dimensional Hopf algebra, then the antipode of
$H$ is always invertible (see, Corollary 5.6.1 of \cite{S}). Using
the adjoint action of $H$ on itself, Majid in Example 4.6 of
\cite{Ma} constructed  a matched pair $(H, H^{*cop})$ and deduced
the Drinfeld's quantum double $D(H)=H^{*cop}\bowtie H$. By
Corollaries \ref{bh} and \ref{gk}, we have
\begin{coro}
If $H$ is a finite dimensional Hopf algebra, then
$$
\Hc_q\Big(\Ho_p\big(H,C_{\bullet}
({{H^{*cop}}_{_H}}^{\!\!\natural})\big);W\Big)\Rightarrow
\Hc_{p+q}(D(H);W).
$$
If moreover, $H$ is semisimple $($equivalently,
there is an integral $t\in H$ with $\ep(t)=1$$)$, then
$$
\Hc_{n}(D(H);W)\cong \Hc_n\big(C^{H}_{\bullet}({{H^{*cop}}_{_H}}^{\!\!\natural});W\big).
$$
\end{coro}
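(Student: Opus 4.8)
The plan is to exhibit the Drinfeld double as the double crossproduct $D(H) = H^{*cop} \bowtie H$ attached to Majid's matched pair $(H^{*cop}, H)$ (Example 4.6 of \cite{Ma}), and then to apply Corollary \ref{bh} together with the separability corollary stated just after Theorem \ref{lim2}. First I would dispose of the hypotheses of Corollary \ref{bh}: because $H$ is finite dimensional, so is $H^{*cop}$, and by Corollary 5.6.1 of \cite{S} the antipode of a finite dimensional Hopf algebra is automatically invertible; hence both members of the matched pair $(H^{*cop}, H)$ have invertible antipodes. Taking $B := H^{*cop}$ in the second spectral sequence of Corollary \ref{bh} gives
$$\Hc_q\Big(\Ho_p\big(H, C_\bullet((H^{*cop})_H^\natural)\big); W\Big) \Rightarrow \Hc_{p+q}(H^{*cop}\bowtie H; W),$$
and the identification $H^{*cop}\bowtie H = D(H)$ is precisely the first assertion. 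This part requires no computation beyond the hypothesis check.

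For the second assertion I would write $D(H) = H^{*cop}\#_R H$ in the strong smash product notation. Tracking the proof of Proposition \ref{5.1}, the map $R$ is constructed as an isomorphism $H\ot H^{*cop}\to H^{*cop}\ot H$, so in the convention $A\#_R B$ with $R\colon B\ot A\to A\ot B$ the factor $A$ is $H^{*cop}$ and the factor $B$ is $H$. The separability corollary following Theorem \ref{lim2}, applied with this $B = H$, then yields
$$\Hc_n(D(H); W) \cong \Hc_n\big(C^H_\bullet((H^{*cop})_H^\natural); W\big)$$
provided $H$ is separable as a $k$-algebra. It thus remains to deduce separability of $H$ from semisimplicity.

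To make this last step I would use the two forms of the hypothesis together with the standing assumption $\mathbb{Q}\subseteq k$. The equivalence of semisimplicity of the finite dimensional Hopf algebra $H$ with the existence of a left integral $t\in H$ satisfying $\ep(t)=1$ is the Hopf-algebraic Maschke theorem of Larson and Sweedler, which I would cite. Since $k$ has characteristic zero it is perfect, so the center of the finite dimensional semisimple algebra $H$ — a finite product of finite field extensions of $k$ — is a separable $k$-algebra; equivalently $H\ot_k\bar{k}$ remains semisimple, which is exactly separability of $H$ over $k$. Substituting this into the separability corollary completes the proof.

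The genuinely delicate points are bookkeeping and hypothesis-tracking rather than new computation. One must keep careful track, through Proposition \ref{5.1}, of which of the two Hopf algebras is the factor $B$ of the strong smash product, so that separability is invoked for $H$ rather than for $H^{*cop}$; and one must not omit the characteristic-zero input, since it is exactly the passage from semisimplicity to separability that relies on $\mathbb{Q}\subseteq k$ and that forces the vanishing of $\Ho_p(H,-)$ for $p>0$, hence the collapse at $E^2$ underlying the separability corollary.
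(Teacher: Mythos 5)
Your proposal is correct and takes essentially the same route as the paper: apply Corollary \ref{bh} to Majid's matched pair $(H^{*cop},H)$ (so that $A=H^{*cop}$, $B=H$ in the smash-product convention) to get the spectral sequence, then collapse it via separability of $H$ using the corollary following Theorem \ref{lim2}. The paper is terser---it simply cites Corollaries \ref{bh} and \ref{gk}---and your hypothesis-tracking (invertibility of both antipodes via Sweedler, and the passage from semisimplicity to separability, which uses the Larson--Sweedler integral characterization and $\mathbb{Q}\subseteq k$) supplies exactly the details the paper leaves implicit.
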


\begin{remark} Actually, any Drinfeld's quantum double turns out to be of
Majid's double crossproduct structure (see \cite{M}), while the
recently appeared attractive objects, such as the two-parameter or
the multiparameter (restricted) quantum (affine) groups, the pointed
Hopf algebras arising from Nichols algebras of diagonal type (cf.
\cite{BW, BGH, HP, HRZ, HW, PHR,  ARS, AS, H} and references
therein), are of Drinfeld's double structures (under certain
conditions for the root of unity cases). Thereby, our machinery
established for the strong smash product algebras is indeed suitable
to a large class of many interesting Hopf algebras.
\end{remark}

\noindent \textbf{5.2} \ The following first example comes from the
rank $1$ case (modified) of the smash product algebra
$\mathscr{A}_q\#\mathscr{D}_q$ introduced in \cite{Hu} (p.525,
subsection {\bf 3.5}), which was used to define intrinsically and
construct a quantum Weyl algebra $\mathcal W_q(2n)$. Although our
example here is still a crossed product algebra, Proposition 5.3 in
\cite{AK}, under the assumption of the Hopf algebra $H$ being
semisimple (so automatically finite dimensional), does not work for
our example.

\begin{example}
Let $q\in k$ be an $N$-th primitive root of unity. Define
$\mathscr{A}$ to be $k[x]/(x^N{-}1)$ which is isomorphic to the
group algebra $k[\mathbb{Z}/N\mathbb{Z}] $. Define $\mathscr{D}$ to
be the associative $k$-algebra generated by $\pa$, $\sg^{\pm1}$,
subject to relation $\sg^{-1} \pa \sg=q\pa$. $\mathscr{D}$ is a Hopf
algebra with the coproduct, counit, and antipode defined as follows:
$$
\De(\partial)=\partial\ot 1+\sg\ot \partial, \quad \De(\sg)=\sg\ot
\sg,\quad \ep(\partial)=0,\quad \ep(\sg)=1,
$$
$$
S(\partial)=-\sg^{-1}\partial, \quad S(\sg)=\sg^{-1}.
$$
The antipode of $\mathscr{D}$ is invertible, as $S^{2N}=id$. One can
calculate that $S^{-1}(\partial)=-\partial \sg^{-1}$ and $
S^{-1}(\sg)=\sg^{-1}$. Let $(n)_q=1+q+\cdots +q^{n-1}$ for
$0<n\in\mathbb{N}$. Then $(N)_q=0$.
\end{example}
\begin{lemma}$(\cite{Hu})$
$\mathscr{A}$ is a $\mathscr{D}$-module algebra via $\partial.1=0$,
$\sg.1=1$, and for $n>0$,
$$
\partial.\,x^n=(n)_q\,x^{n-1},\quad\sg.\,x^n=q^nx^n.
$$
\end{lemma}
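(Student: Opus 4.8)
The plan is to realize the prescribed action as a genuine algebra homomorphism $\mathscr{D}\ra\mathrm{End}_k(\mathscr{A})$ and then verify the two module-algebra axioms. Since $\{1,x,\ldots,x^{N-1}\}$ is a $k$-basis of $\mathscr{A}$, I would first define $k$-linear operators $\Sigma$ and $P$ on $\mathscr{A}$ by $\Sigma(x^n)=q^n x^n$ and $P(x^n)=(n)_q x^{n-1}$ (with the convention $(0)_q=0$, so $P(1)=0$), extended linearly. The operator $\Sigma$ is manifestly invertible, with $\Sigma^{-1}(x^n)=q^{-n}x^n$, so $\sg^{-1}$ will act by $\Sigma^{-1}$. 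Because $\mathscr{D}$ is presented by the generators $\pa,\sg^{\pm1}$ and the single relation $\sg^{-1}\pa\sg=q\pa$, the assignment $\pa\mapsto P$, $\sg\mapsto\Sigma$ extends to a unique algebra map, and hence defines a $\mathscr{D}$-module structure, precisely when $\Sigma^{-1}P\Sigma=qP$ holds as operators.

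Checking this relation is a direct computation on the basis: $\Sigma^{-1}P\Sigma(x^n)=q^{-(n-1)}q^n(n)_q\,x^{n-1}=q\,(n)_q\,x^{n-1}=qP(x^n)$, so the relation holds. Here the crucial consistency point --- and the step I expect to be the only real subtlety --- is that $P$ and $\Sigma$ actually descend to the quotient $k[x]/(x^N-1)$; equivalently, that the formulas respect $x^N=1$. For $\Sigma$ this is immediate since $q^N=1$. For $P$ it rests on the hypothesis that $q$ is a \emph{primitive} $N$-th root of unity, giving $(N)_q=0$: one then has $(k+N)_q=(k)_q+q^k(N)_q=(k)_q$ and $P\big(x^k(x^N-1)\big)=(k)_q x^{k-1}(x^N-1)$, so $P$ preserves the ideal $(x^N-1)$ and descends. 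Thus the $\mathscr{D}$-module structure is well defined on $\mathscr{A}$.

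It then remains to verify the two module-algebra axioms, and for both it suffices to check them on the algebra generators $\sg^{\pm1},\pa$ of $\mathscr{D}$, since the elements of $\mathscr{D}$ satisfying each axiom form a unital subalgebra (using that $\De$ and $\ep$ are algebra maps). The unit axiom $h.1=\ep(h)1$ holds on generators because $\pa.1=0=\ep(\pa)1$ and $\sg^{\pm1}.1=1=\ep(\sg^{\pm1})1$. For the compatibility $h.(x^m x^n)=\sum (h_{(1)}.x^m)(h_{(2)}.x^n)$: with $\De(\sg)=\sg\ot\sg$ (and likewise $\De(\sg^{-1})=\sg^{-1}\ot\sg^{-1}$) both sides give $q^{\pm(m+n)}x^{m+n}$, while with $\De(\pa)=\pa\ot 1+\sg\ot\pa$ the right-hand side equals $\big((m)_q+q^m(n)_q\big)x^{m+n-1}$. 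The identity closing the argument is the $q$-addition rule $(m+n)_q=(m)_q+q^m(n)_q$, which matches this against the left-hand side $\pa.x^{m+n}=(m+n)_q x^{m+n-1}$; the reductions modulo $x^N-1$ are automatic once $P,\Sigma$ are known to be well defined on $\mathscr{A}$. Assembling these verifications shows that $\mathscr{A}$ is a $\mathscr{D}$-module algebra. The main obstacle is thus not any single hard computation but ensuring well-definedness on the quotient, which is exactly where primitivity of $q$ (through $(N)_q=0$) enters.
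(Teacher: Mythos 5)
Your proposal is correct and is essentially the paper's own argument, just organized more formally: both verify the relation $\sg^{-1}\pa\sg=q\pa$ on monomials, check consistency with $x^N=1$ via $(N)_q=0$, and close the module-algebra axiom with the $q$-addition rule $(m+n)_q=(m)_q+q^m(n)_q$. The paper leaves the universal-property step and the reduction to generators of $\mathscr{D}$ implicit, but the computational content is identical.
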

\begin{proof}
We should first check that $\mathscr{A}$ is a $\mathscr{D}$-module.
Indeed, if $n>0$,
$$(\sg^{-1}\partial\sg).\,x^n=q(n)_q\,x^{n-1}=q\partial.\,x^{n},$$
$$\partial.\,x^N=(N)_q\,x^{N-1}=0=\partial.\,1, \text{ and }
\sg.\,x^N=q^Nx^N=1.$$

From direct calculation, we get
$$(\partial.\,x^i)x^j+(\sg.\,x^i)(\partial.\,x^j)=(i{+}j)_q\,x^{i+j-1}=\partial.\,(x^ix^j).$$
This completes the proof.\end{proof}

 As we stated in Example
\ref{eg ch1.5}, the crossed product $\mathscr{A}\rtimes \mathscr{D}$
is a strong smash product algebra, since $\mathscr{D}$ is a Hopf
algebra with invertible antipode. For example, $R(\partial\ot
x^n)=(n)_qx^{n-1}\ot 1+q^nx^n\ot \partial$, $R(\sg\ot x^n)=q^nx^n\ot
\sg$ and $R^{-1}(x^n\ot \partial)=q^{-n}\partial \ot
x^n-q^{-n}(n)_q\ot x^{n-1}$ for $n>0$.

Since $\mathscr{A}$ is a group algebra of  a finite group, then it
is a semisimple  Hopf algebra, so the spectral sequence collapses
and we have
\begin{coro}
$$
\Hc_n(\mathscr{A}\rtimes \mathscr{D};W) \cong
\Hc_n\big(C^{\mathscr{A}}_{\bullet}({}_{\mathscr{A}}^{~\natural}{\mathscr{D}});
W\big).
$$
\end{coro}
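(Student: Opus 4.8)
The plan is to recognize this corollary as the direct specialization of the separability result established at the end of Section~\ref{44} to the pair $A=\mathscr{A}$, $B=\mathscr{D}$. First I would record that the crossed product $\mathscr{A}\rtimes\mathscr{D}$ is genuinely a strong smash product algebra $\mathscr{A}\#_{_R}\mathscr{D}$: this is already guaranteed by Example~\ref{eg ch1.5}, since $\mathscr{D}$ is a Hopf algebra whose antipode is invertible ($S^{2N}=\id$) and $\mathscr{A}$ is a $\mathscr{D}$-module algebra. Thus all the machinery of Theorem~\ref{lim1}, in particular the spectral sequence
$$
E^2_{p,q}=\Hc_p\Big(\Ho_q\big(\mathscr{A},C_{\bullet}({}_{\mathscr{A}}^{~\natural}\mathscr{D})\big);W\Big)\Rightarrow \Hc_{p+q}(\mathscr{A}\rtimes\mathscr{D};W),
$$
is available, and the whole argument reduces to forcing this spectral sequence to collapse onto its bottom row.

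The single point that needs verification is that $\mathscr{A}$ is a separable $k$-algebra. Here I would use that $\mathscr{A}\cong k[\mathbb{Z}/N\mathbb{Z}]$ and that $N$ is invertible in $k$, which holds because $\mathbb{Q}\subseteq k$. Concretely, the element
$$
e=\frac{1}{N}\sum_{i=0}^{N-1}x^{i}\ot x^{-i}\in \mathscr{A}\ot\mathscr{A}^{op}
$$
is a separability idempotent: it satisfies $m(e)=1$ and $a\,e=e\,a$ for every $a\in\mathscr{A}$, so $\mathscr{A}$ is projective as a module over its enveloping algebra $\mathscr{A}^{e}$. Equivalently, Maschke's theorem makes $\mathscr{A}$ semisimple, and a finite-dimensional semisimple algebra over a field of characteristic zero is separable.

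With separability in hand the conclusion is immediate. Using $\Ho_q(\mathscr{A},M)\cong\Tor_q^{\mathscr{A}^{e}}(M,\mathscr{A})$ together with the projectivity of $\mathscr{A}$ over $\mathscr{A}^{e}$, one gets $\Ho_q\big(\mathscr{A},C_{\bullet}({}_{\mathscr{A}}^{~\natural}\mathscr{D})\big)=0$ for all $q>0$, while Theorem~\ref{h01} identifies the surviving row $q=0$ as the cyclic module $C^{\mathscr{A}}_{\bullet}({}_{\mathscr{A}}^{~\natural}\mathscr{D})$. Hence $E^2_{p,q}=0$ unless $q=0$, the spectral sequence degenerates, and the resulting edge isomorphism yields
$$
\Hc_n(\mathscr{A}\rtimes\mathscr{D};W)\cong E^2_{n,0}=\Hc_n\big(C^{\mathscr{A}}_{\bullet}({}_{\mathscr{A}}^{~\natural}\mathscr{D});W\big),
$$
which is the claim. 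The proof carries essentially no obstacle; the only thing worth flagging is the deliberate choice to exploit the good homological behaviour of the small subalgebra $\mathscr{A}$ rather than that of $\mathscr{D}$, whose Hochschild homology is not concentrated in degree zero. This is precisely the advantage Theorem~\ref{lim1} offers over the symmetric Theorem~\ref{lim2} and over the approach of \cite{AK}.
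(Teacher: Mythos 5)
Your proposal is correct and follows essentially the same route as the paper: the paper likewise observes that $\mathscr{A}\cong k[\mathbb{Z}/N\mathbb{Z}]$ is a semisimple (hence separable) algebra and invokes the separability corollary at the end of Section~\ref{44} to collapse the spectral sequence of Theorem~\ref{lim1} onto the row $q=0$, which Theorem~\ref{h01} identifies with $C^{\mathscr{A}}_{\bullet}({}_{\mathscr{A}}^{~\natural}\mathscr{D})$. The only difference is cosmetic: you exhibit the separability idempotent $\frac{1}{N}\sum_i x^i\ot x^{-i}$ explicitly, whereas the paper simply cites semisimplicity of the group algebra of a finite group.
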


\begin{example}
Let $\mathscr D_N$ be the quotient algebra of $\mathscr D$ by the
ideal $\langle \partial^N\rangle$. As $\langle \partial^N\rangle$ is
a Hopf ideal (owing to $\Delta(\partial^s)=\sum_{i=0}^s
\binom{s}{i}_q\,\sg^i\partial^{s-i}\otimes\partial^i$ and
$\Delta(\partial^N)=\partial^N\otimes 1+\sg^N\otimes \partial^N$),
$\mathscr D_N$ is a Hopf algebra. In particular, when $N=2$,
$\mathscr D_2$ is nothing but the Pareigis' Hopf algebra $\mathcal
P$ (see \cite{P} or the next subsection for definition).

 Furthermore, consider the quotient Hopf algebra $\bar{\mathscr D}_N$ of
 $\mathscr D_N$ by the Hopf ideal $\langle \sg^N{-}1\rangle$. $\bar
 {\mathscr D}_N$ is just the Taft algebra. Cyclic homology of the
 Taft algebra as a special truncated quiver algebra is computed by Taillefer \cite{Ta}.
\end{example}

\noindent \textbf{5.3} \ This subsection is devoted to effectively
computing the cyclic homology of the Pareigis' Hopf algebra
$\mathcal P$ using our theory.

In \cite{P}, Pareigis defined a noncommutative and noncocommutative
Hopf algebra $\P$, which links closely the category of complexes and
the category of comodules over $\P$. That is, the category of
complexes is equivalent as a tensor category to the category of
comodules over $\P$.  Explicitly,  $\P$ is defined to be the
quotient algebra of the free algebra $k\langle s,t,t^{-1}\rangle$ by
the two sided ideal that is generated by
$$tt^{-1}-1, \ t^{-1}t-1, \ s^2, \ st+ts.$$
Then $\P$ turns out to be a Hopf algebra with the following
coproduct, counit and  antipode,
\begin{gather*}
\Delta(t)=t\ot t,\quad  \ep(t)=1, \quad S(t)=t^{-1};\\
\Delta(s)=s\ot1+t^{-1}\ot s,\quad  \ep(s)=0, \quad S(s)=st.
\end{gather*}
$\P$ can be regarded as the crossed product algebra of $k[s]/s^2$
and $k[t,t^{-1}]$,  where $k[s]/s^2$ is a module algebra over
$k[t,t^{-1}]$ with the conjugate action $t.s=tst^{-1}=-s$. Denote by
$D$ the algebra of dual number $k[s]/s^2$, and by $T$ the Laurent
polynomial ring $k[t,t^{-1}]$.
$$\P\cong
D\rtimes T.$$ $\P$ is a strong smash product algebra $D\#_{_R}T$
with the invertible
 $R: T \ot D\ra D\ot T$ defined to be
$$R(t^r\ot s)=(-1)^r(s\ot t^r).$$

 Let $W=k[u]/uk[u]$. We would like to calculate the
Hochschild homology of $\P$ first. Consider the cyclic module
$E^1_{\bullet,q}=\Ho_q\big(D,C_{\bullet}({}_D^{~~\natural}T)\big)\cong
\Tor^{D^e}_q(D,C_{\bullet}({}_D^{~~\natural}T))$. Its face maps,
degeneracy maps, and cyclic operators are induced by the
corresponding operators defined in \eqref{7} for the cylindrical
module $D\natural T(\bullet,q)$.

Using the following resolution of $D$ by projective $D^e$-modules
 (see e.g., \cite{W})
$$\xymatrix{R_{\bullet}:~\cdots\ar[r]^-{\nu}&D^{e}\ar[r]^{\mu}&D^{e}\ar[r]^{\nu}&D^{e}
 \ar[r]^{\mu}&D^{ e}\ar[r]^{\mathrm{m}}&D\ar[r]& 0},$$
where $\mu=1\ot s-s\ot 1$, $\nu=1\ot s+s\ot 1$, $\mathrm{m}$ is the
product of $D$, we get
$$\Ho_q\big(D,C_{p}({}_D^{~~\natural}T)\big)\cong
\begin{cases}T^{\ot (p+1)}\ot 1\oplus T^{\ot(p+1)}_{ev}\ot s& \text{ for }q=0\\
T^{\ot (p+1)}_{ev}\ot 1\oplus T^{\ot(p+1)}_{od}\ot s&\text{ for
}q=2n-1>0\\T^{\ot (p+1)}_{od}\ot 1\oplus T^{\ot(p+1)}_{ev}\ot
s&\text{ for }q=2n>0\end{cases},$$ where $$T_{ev}^{\ot
(p+1)}:=k\{(t^{r_0},\ldots,t^{r_p})\mid r_0+\cdots +r_p\text{ is
even}\},$$ $$T_{od}^{\ot (p+1)}:=k\{(t^{r_0},\ldots,t^{r_p})\mid
r_0+\cdots +r_p\text{ is odd}\}.$$

In order to specify the operators of the cyclic module
$\Ho_q\big(D,C_{\bullet}({}_D^{~~\natural}T)\big)$, we should
represent the elements of
$\Ho_q\big(D,C_{\bullet}({}_D^{~~\natural}T)\big)$ by elements of
$D\natural T(\bullet,q)$. According to the Comparison Theorem, there
is a unique chain map lifting $id_D$ from the resolution
$R_{\bullet}$ to the bar resolution of $D$ up to chain homotopy
equivalence. This required chain map $\zeta_{\bullet}$ is defined as
follows,
$$\xymatrix{\cdots\ar[r]^{\nu}&D^{e}\ar[r]^{\mu}\ar[d]^{\zeta_3}
&D^{e}\ar[r]^{\nu}\ar[d]^{\zeta_2}&D^{e}
\ar[r]^{\mu}\ar[d]^{\zeta_1}&D^{
e}\ar[r]^{\mathrm{m}}\ar[d]^{\zeta_0}
&D\ar[r]\ar[d]^{id}& 0\\
\cdots\ar[r]^{\mathrm{b}'}&D^{\ot 5}\ar[r]^{\mathrm{b}'}&D^{\ot
4}\ar[r]^{\mathrm{b}'}&D^{\ot 3}\ar[r]^{\mathrm{b}'}&D^{\ot
2}\ar[r]^{\mathrm{b}'}&D\ar[r]& 0}
$$
where $\zeta_n: D^e\ra D^{\ot (n+2)}$ and
\begin{align*}
&\zeta_0=id,\\
&\zeta_{n}(s\ot s)=s^{\ot (n+2)},~\zeta_n(1\ot 1)=-1\ot s^{\ot n}\ot
1,\\
&\zeta_{n}(1\ot s)=(-1)^{n}\ot s^{\ot (n+1)},~\zeta_n(s\ot 1)=(-1)^n
s^{\ot (n+1)}\ot 1.
\end{align*}
Hence,
$$E^1_{p,q}=\Ho_q\big(D,C_{p}({}_D^{~~\natural}T)\big)\cong
\begin{cases}T^{\ot (p+1)}\ot 1\oplus T^{\ot(p+1)}_{ev}\ot s &\text{ for }q=0\\
T^{\ot (p+1)}_{ev}\ot 1\ot s^{\ot (2n-1)}\oplus T^{\ot(p+1)}_{od}\ot
s^{\ot 2n}&\text{ for }q=2n-1>0\\T^{\ot (p+1)}_{od}\ot 1\ot s^{\ot
2n}\oplus T^{\ot(p+1)}_{ev}\ot s^{\ot (2n+1)}&\text{ for
}q=2n>0\end{cases}.$$ The cyclic operator $\tau$ on
$\Ho_q\big(D,C_{p}({}_D^{~~\natural}T)\big)$ is defined via
$$\tau(t^{r_0},\ldots,t^{r_p}\mid s^{l}\mid \underbrace{s,\ldots,s}_{n \text{ times}})
=(-1)^{(l+n)r_p}(t^{r_p},t^{r_0},\ldots,t^{r_{p-1}}\mid s^{l}\mid
\underbrace{s,\ldots,s}_{n \text{ times}}), \text{ where }l=0,1.$$

We can describe the cyclic modules $E^1_{\bullet,q}$ simply. Let
$C_{\bullet}(T)$ be the cyclic module of the algebra $T$. Since the
face maps, degeneracy maps, and the cyclic operators of
$C_{\bullet}(T)$ do not change the total degree of $t$,
$C_{\bullet}(T)$ can be decomposed into the direct sum of two
sub-cyclic modules $C_{\bullet}(T)_{ev}$ and $C_{\bullet}(T)_{od}$
with $C_{p}(T)_{ev}=T^{\ot(p+1)}_{ev}$ and
$C_{p}(T)_{od}=T^{\ot(p+1)}_{od}$. Let $C_{\bullet}'(T)_{ev}$ be the
cyclic module with $C_{n}'(T)_{ev}=T^{\ot(n+1)}_{ev}$ and the
operators
\begin{align*}
\tau(t^{r_0},t^{r_1},\ldots,t^{r_n})
&=(-1)^{r_n}(t^{r_n},t^{r_0},\ldots,t^{r_{n-1}}),\\
\partial_i(t^{r_0},t^{r_1},\ldots,t^{r_n})
&=(t^{r_0},\ldots,t^{r_i+r_{i+1}},\ldots,t^{r_n}),\quad \text{for }0\leq i<n,\\
\partial_n(t^{r_0},t^{r_1},\ldots,t^{r_n})
&=(-1)^{r_n}(t^{r_0+r_n},t^{r_1},\ldots,t^{r_{n-1}}),\\
\sigma_j(t^{r_0},t^{r_1},\ldots,t^{r_n})&=(t^{r_0},\ldots,t^{r_j},1,t^{r_{j+1}},
\ldots,t^{r_n}),\quad
\text{for }0\leq j\leq n,
\end{align*}
where $r_0+\cdots+r_n$ is an even integer.

\begin{coro} $E^1_{\bullet,0}$ is identified with $C_{\bullet}(T)\oplus
C_{\bullet}'(T)_{ev}$ as cyclic modules; for $n>0$,
$E^1_{\bullet,n}$ is identified with $C_{\bullet}(T)_{od}\oplus
C_{\bullet}'(T)_{ev}$ as cyclic modules.
\end{coro}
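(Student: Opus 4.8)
The plan is to read the two isomorphisms straight off the explicit description of $E^1_{p,q}$ and its cyclic operator $\tau$ recorded just above, matching faces, degeneracies and $\tau$ one summand at a time. Each $E^1_{\bullet,q}$ splits according to the value $l\in\{0,1\}$ of its first $D$-slot; writing a generator as $(t^{r_0},\ldots,t^{r_p}\mid s^l\mid s,\ldots,s)$ with $q$ trailing copies of $s$, the relevant $\tau$-sign is $(-1)^{(l+q)r_p}$ (this is the recorded sign, whose exponent counts the $q$ trailing $s$'s together with the initial one when $l=1$). The sign arises because $t_{p,q}$ carries $t^{r_p}$ across the $l+q$ nontrivial $D$-entries through $R(t^r\ot s)=(-1)^r(s\ot t^r)$, while $R(t^r\ot 1)=1\ot t^r$ contributes nothing.

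First I would dispose of the sign-free data. The inner faces $d_i$ ($0\le i<p$) and the degeneracies $s_j$ induced from \eqref{7} touch only the $T$-slots: $d_i$ replaces $t^{r_i}t^{r_{i+1}}$ by $t^{r_i+r_{i+1}}$ and $s_j$ inserts $1=t^0$, exactly as $\partial_i$ and $\sigma_j$ do in $C_{\bullet}(T)$ and in $C_{\bullet}'(T)_{ev}$. Since multiplication in $T$, insertion of $t^0$, and cyclic permutation all preserve the total exponent $r_0+\cdots+r_p$ modulo $2$, each parity-restricted piece in the $E^1$-list is a genuine sub-cyclic-module; this legitimizes the proposed decomposition and leaves only the operators on each piece to be identified.

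The crux is tracking the single sign $(-1)^{(l+q)r_p}$ through both $\tau$ and the last face $d_p=d_0\,t_{p,q}$; the latter sends $(t^{r_0},\ldots,t^{r_p}\mid\cdots)$ to $(-1)^{(l+q)r_p}(t^{r_0+r_p},t^{r_1},\ldots,t^{r_{p-1}}\mid\cdots)$ and so inherits exactly the sign of $\tau$. This sign is trivial precisely when $l+q$ is even and equals $(-1)^{r_p}$ precisely when $l+q$ is odd. Reading the $E^1$-list, the summand with $l+q$ even is the sign-free one: at $q=0$ its $l=0$ piece carries no parity restriction and hence is the full $C_{\bullet}(T)$, whereas for $q>0$ the relevant piece is the odd one, giving $C_{\bullet}(T)_{od}$, and on it $\tau$ and $d_p$ are the honest cyclic operator and $\partial_p$ of $C_{\bullet}(T)$. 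The complementary summand always has $l+q$ odd and lives in the even part, so $\tau$ and $d_p$ acquire the factor $(-1)^{r_p}$ defining $C_{\bullet}'(T)_{ev}$. Assembling over all $p$ yields the two stated isomorphisms. I do not anticipate a real obstacle—the whole argument is bookkeeping of one sign—so the only point requiring care is the parity accounting that separates the full $C_{\bullet}(T)$ at $q=0$ from its odd part $C_{\bullet}(T)_{od}$ at $q>0$.
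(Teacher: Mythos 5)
Your proposal is correct and takes essentially the same route as the paper, which states this corollary without further argument as an immediate consequence of the displayed description of $E^1_{p,q}$ and the sign formula $\tau=(-1)^{(l+q)r_p}(\cdots)$: the identification is exactly the reading-off you perform. Your parity bookkeeping (the sign-free summand, $l+q$ even, is the full $C_{\bullet}(T)$ at $q=0$ and $C_{\bullet}(T)_{od}$ for $q>0$; the signed summand, $l+q$ odd, is always $C_{\bullet}'(T)_{ev}$) matches the paper's case-by-case list, including the matching of the last face $\partial_p=\partial_0\tau$.
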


\begin{lemma}The Hochschild homology of $(C_{\bullet}'(T)_{ev},\partial)$
is $0$.
\end{lemma}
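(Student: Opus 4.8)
The plan is to recognize $(C'_{\bullet}(T)_{ev},\partial)$ as the even part of a \emph{twisted} Hochschild complex of $T=k[t,t^{-1}]$, and then to compute that twisted homology by the small-resolution method already used above for $D$. Introduce the algebra automorphism $\gamma\colon T\ra T$, $\gamma(t^r)=(-1)^r t^r$, and let ${}_\gamma T$ be $T$ viewed as a $T$-bimodule with the ordinary right action and with left action twisted by $\gamma$; as a module over $T^e=T\ot T^{op}$ (and $T^{op}=T$ since $T$ is commutative) one has $(a\ot b)\cdot x=\gamma(a)\,x\,b$. Comparing with the operators displayed just above, the faces $\partial_i$ $(0\le i<n)$ and the degeneracies $\sigma_j$ are the usual Hochschild operators, while the factor $(-1)^{r_n}$ in $\partial_n(t^{r_0},\dots,t^{r_n})=(-1)^{r_n}(t^{r_0+r_n},t^{r_1},\dots,t^{r_{n-1}})$ is exactly $\gamma$ applied to the wrap-around factor $t^{r_n}$ before it multiplies $t^{r_0}$ on the left. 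Hence, forgetting the parity restriction, the underlying complex of $C'_{\bullet}(T)$ is $C_{\bullet}(T,{}_\gamma T)$, computing $\Ho_{\bullet}(T,{}_\gamma T)\cong\Tor^{T^e}_{\bullet}({}_\gamma T,T)$ (see \cite{L}).

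Next I would evaluate this $\Tor$ with the length-one free bimodule resolution of the commutative algebra $T$, namely
\[
0\ra T^e\xrightarrow{\ \cdot(t\ot 1-1\ot t)\ }T^e\xrightarrow{\ m\ }T\ra 0,
\]
the exact counterpart for $T$ of the resolution $R_{\bullet}$ of $D$. Applying ${}_\gamma T\ot_{T^e}(-)$ collapses it to the two-term complex $0\ra{}_\gamma T\xrightarrow{\ \delta\ }{}_\gamma T\ra 0$, where $\delta$ is the action of $t\ot 1-1\ot t$ on ${}_\gamma T$, that is $\delta(x)=\gamma(t)\,x-x\,t=-tx-tx=-2tx$. (The twist is decisive here: untwisted one would get $tx-xt=0$, recovering the nonzero $\Ho_0$ and $\Ho_1$ of $T$.) Since $k\supseteq\mathbb{Q}$ and $t$ is invertible, the element $-2t$ is a unit of $T$, so $\delta$ is an isomorphism. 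Therefore $\Ho_0(T,{}_\gamma T)=\mathrm{coker}\,\delta=0$, $\Ho_1(T,{}_\gamma T)=\ker\delta=0$, and $\Ho_n=0$ for $n\ge 2$ because the resolution has length one; thus the whole twisted Hochschild homology of $T$ vanishes.

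Finally I would pass to the even part. Every one of the operators preserves the total exponent $d=r_0+\cdots+r_n$, so $C_{\bullet}(T,{}_\gamma T)$ splits as the direct sum over $d\in\mathbb{Z}$ of its total-degree-$d$ sub complexes, and its homology is the corresponding direct sum; as the whole homology is $0$, each graded summand is acyclic. Since $C'_{\bullet}(T)_{ev}$ is precisely the sum of the summands with $d$ even, its homology vanishes, which is the claim. The one place to be careful is this last reduction: one must \emph{not} try to identify $C'_{\bullet}(T)_{ev}$ with the Hochschild complex of the subalgebra $k[t^2,t^{-2}]$, because the internal products $t^{r_i}t^{r_{i+1}}$ move through odd intermediate exponents while preserving only the total degree; staying inside the full twisted complex and decomposing by total degree circumvents this. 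The homological input is immediate once the twist is spotted, so the only real (and mild) obstacle is matching the sign in $\partial_n$ with $\gamma$ and justifying the grading decomposition.
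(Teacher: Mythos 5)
Your proof is correct, but it takes a genuinely different route from the paper's. The paper disposes of the lemma in one stroke by exhibiting an explicit chain contraction of the identity,
$$h_n(t^{r_0},t^{r_1},\ldots,t^{r_n})=\frac{1}{2}\sum_{i=0}^{n}(-1)^i(t^{r_0-1},t^{r_1},\ldots,t^{r_i},t,t^{r_{i+1}},\ldots,t^{r_n}),$$
and checking directly that $\partial h_n+h_{n-1}\partial=id$; note that $h_n$ preserves the total exponent, so it really maps $C'_n(T)_{ev}$ into $C'_{n+1}(T)_{ev}$. You instead recognize $C'_{\bullet}(T)$ as the Hochschild complex $C_{\bullet}(T,{}_{\gamma}T)$ twisted by the automorphism $\gamma(t^r)=(-1)^rt^r$ (your matching of the sign $(-1)^{r_n}$ in $\partial_n$ with $\gamma$ applied to the wrap-around factor is exactly right), compute $\Tor^{T^e}_{\bullet}({}_{\gamma}T,T)$ from the length-one resolution $0\ra T^e\ra T^e\ra T\ra 0$, where the induced map $\delta(x)=\gamma(t)x-xt=-2tx$ is an isomorphism because $-2t$ is a unit, and then pass to the even part via the total-degree decomposition --- a step you handle carefully and that is genuinely needed, since the even part is not itself a Hochschild complex of a subalgebra. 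Both arguments use $2\in k^{\times}$ (the paper through the factor $\frac12$, you through the unit $-2t$), and in fact your computation explains where the paper's seemingly ad hoc homotopy comes from: it is the transport of the inverse of $\delta$ along the comparison map, which is why $t^{r_0-1}$ and $\frac12$ appear. What each approach buys: the paper's is shorter and entirely self-contained (one direct verification); yours is conceptual, identifies the vanishing as a statement about twisted Hochschild homology of $T$, and generalizes immediately to any automorphism $\gamma$ for which $\gamma(t)-t$ is a unit. The only points you assert rather than prove --- exactness of the small resolution (clear, since $T^e$ is a domain and $\ker m=(t\ot1-1\ot t)$) and that Hochschild homology of the complex with coefficients in a bimodule computes $\Tor$ over $T^e$ (the paper's own citation of Proposition 1.1.13 of \cite{L}) --- are standard and unproblematic.
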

\begin{proof}
Indeed, we can construct a chain contraction
$\{h_n:C_{n}'(T)_{ev}\ra C_{n+1}'(T)_{ev}\}$ of the identity chain
map. That is,
 $$h_n(t^{r_0},t^{r_1},\ldots,t^{r_n})
 =\frac{1}{2}\sum_{i=0}^{n}(-1)^i
 (t^{r_0-1},t^{r_1},\ldots,t^{r_i},t,
 t^{r_{i+1}},\ldots,t^{r_n}).$$
 We can check directly that
  $\partial h_n+h_{n-1}\partial=id$. Hence
  $\Ho_n(C_{\bullet}'(T)_{ev},\partial)=0$, for all $n\geq 0$.
\end{proof}

Since $\Hh_n(T)\cong
\begin{cases}T&\text{ for }n=0,1\\0&\text{ for }n\geq 2\end{cases}$
and $\Hh_n(T)_{od}\cong
\begin{cases}T_{od}&\text{ for }n=0,1\\0&\text{ for }n\geq
2\end{cases}$, we obtain that
$$E^2_{p,q}=0,\forall~ p\neq 0,1;$$
$$E^2_{0,0}=E^2_{1,0}=T\quad \text{and}\quad E^2_{0,q}=E^2_{1,q}=T_{od},\text{  for }q>0.$$
So the spectral sequence collapses at $E^2$, and
$\Hh_n(\P)=\bigoplus_{p+q=n}E^{2}_{p,q}$.
\begin{coro}
The Hochschild homology of $\P$ is $$\Hh_n(\P)\cong
\begin{cases}T&\text{ for
}n=0\\T\oplus T_{od}&\text{ for }n=1\\T_{od}\oplus T_{od}&\text{ for
}n>1\end{cases}.$$
\end{coro}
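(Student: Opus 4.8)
The plan is to read the answer directly off the collapsed $E^2$-page furnished by the spectral sequence of Theorem~\ref{lim1}. Since $W = k[u]/uk[u]$ identifies $\Hc_*(-;W)$ with Hochschild homology, that sequence reads $E^2_{p,q} = \Hh_p\big(\Ho_q(D,C_\bullet({}_D^{~\natural}T))\big) \Rightarrow \Hh_{p+q}(\P)$, and the computation above records that $E^2_{p,q}$ vanishes unless $p \in \{0,1\}$, with $E^2_{0,0}=E^2_{1,0}=T$ and $E^2_{0,q}=E^2_{1,q}=T_{od}$ for $q>0$. First I would confirm collapse at $E^2$: the differentials $d^r\colon E^r_{p,q}\to E^r_{p-r,q+r-1}$ with $r\ge 2$ have target in a column of index $p-r<0$ and hence vanish, so $E^\infty = E^2$. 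Because $W \cong k$ as a $k$-module, the whole page consists of $k$-vector spaces and the induced filtration on $\Hh_n(\P)$ splits, giving $\Hh_n(\P) = \bigoplus_{p+q=n} E^2_{p,q}$ with no extension issue.

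Next I would run the case analysis on $n$, using that the $E^2$-page occupies only the two columns $p=0,1$, so the anti-diagonal $p+q=n$ contributes at most the two entries indexed by $(0,n)$ and $(1,n-1)$. For $n=0$ only $(0,0)$ survives and $\Hh_0(\P)=T$. For $n=1$ the pairs $(0,1)$ and $(1,0)$ contribute $T_{od}$ and $T$, so $\Hh_1(\P)=T\oplus T_{od}$. For $n>1$ both second indices $n$ and $n-1$ are strictly positive, so $(0,n)$ and $(1,n-1)$ each contribute $T_{od}$, whence $\Hh_n(\P)=T_{od}\oplus T_{od}$.

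The step needing care is not an obstacle so much as careful bookkeeping of the boundary indices: the split between the $n\le 1$ and $n>1$ formulas comes entirely from whether the second index is $0$, where the relevant row $E^2_{\bullet,0}$ contributes the full $T$, or positive, where $E^2_{\bullet,q}$ with $q>0$ contributes only the odd part $T_{od}$. Given collapse at $E^2$ and the field hypothesis, I expect no genuine difficulty with convergence or extensions, and the three displayed cases assemble at once into the claimed formula.
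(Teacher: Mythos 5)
Your proposal is correct and is essentially the paper's own argument: take $W=k[u]/uk[u]$, invoke the $E^2$-computation preceding the corollary (two nonzero columns $p=0,1$, with $T$ in row $q=0$ and $T_{od}$ in rows $q>0$), note collapse, and read off $\Hh_n(\P)=\bigoplus_{p+q=n}E^2_{p,q}$ along anti-diagonals. Your explicit justifications of the collapse (all $d^r$, $r\geq 2$, either start at a zero group or land in a column of negative index) and of the splitting of the filtration (everything is a $k$-vector space) merely spell out what the paper leaves implicit.
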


The cyclic homology of $T$ is well-known (see e.g., p.337 in
\cite{W})
$$\Hc_n(T)\cong\begin{cases}T &\text{ for }n=0\\k&\text{ for }n>0 \end{cases}.$$
Thanks to the short exact sequences $0\ra
\overline{\Hc}_{n-1}(\P)\ra \overline{\Hh}_n(\P)\ra
\overline{\Hc}_{n}(\P)\ra 0$ (see e.g., \cite{L} Theorem 4.1.13),
where $\overline{\Hh}_n(\P):=\Hh_n(\P)/\Hh_n(T)$ and
$\overline{\Hc}_n(\P):=\Hc_n(\P)/\Hc_n(T)$, we get the cyclic
homology of $\P$.
\begin{prop}
$$\Hc_n(\P)\cong\begin{cases}T&\text{ for }n=0\\  T_{od}\oplus k&
\text{ for }n>0\end{cases}.$$
\end{prop}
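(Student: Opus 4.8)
The plan is to combine the Hochschild computation just obtained with the reduced Connes periodicity sequences, bootstrapping in $n$. I would begin by recording the reduced Hochschild groups. The algebra map $\pi\colon\P\ra T$ determined by $s\mapsto 0$, $t\mapsto t$ is well defined (it annihilates the relators $s^2$ and $st+ts$) and retracts the inclusion $T\hookrightarrow\P$; hence on both $\Hh$ and $\Hc$ the inclusion-induced maps $\Hh_n(T)\ra\Hh_n(\P)$ and $\Hc_n(T)\ra\Hc_n(\P)$ are split monomorphisms, and the reduced groups $\overline{\Hh}_n(\P)$, $\overline{\Hc}_n(\P)$ are the complementary direct summands. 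Dividing the computed $\Hh_n(\P)$ by $\Hh_n(T)$ (which is $T$ for $n=0,1$ and $0$ for $n\geq 2$) gives $\overline{\Hh}_0(\P)=0$, $\overline{\Hh}_1(\P)=T_{od}$, and $\overline{\Hh}_n(\P)=T_{od}\oplus T_{od}$ for $n\geq 2$.

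For the base case I would use $\Hc_0=\Hh_0$, so that $\overline{\Hc}_0(\P)=\overline{\Hh}_0(\P)=0$. Then I feed these into the short exact sequences $0\ra\overline{\Hc}_{n-1}(\P)\ra\overline{\Hh}_n(\P)\ra\overline{\Hc}_n(\P)\ra 0$ and induct on $n$. For $n=1$ the sequence is $0\ra 0\ra T_{od}\ra\overline{\Hc}_1(\P)\ra 0$, so $\overline{\Hc}_1(\P)\cong T_{od}$. Assuming $\overline{\Hc}_{n-1}(\P)\cong T_{od}$ for some $n\geq 2$, the sequence becomes $0\ra T_{od}\ra T_{od}\oplus T_{od}\ra\overline{\Hc}_n(\P)\ra 0$, and it remains to identify the cokernel of an injection $T_{od}\hookrightarrow T_{od}\oplus T_{od}$.

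The device that pins down this cokernel is the internal $\mathbb{Z}$-grading by total $t$-degree. All the operators defining $C_\bullet(T)$, and therefore all the maps appearing in the periodicity sequence, preserve the total power of $t$, so the whole exact sequence is a sequence of graded vector spaces. In each odd degree the graded piece of $T_{od}$ is one-dimensional and that of $T_{od}\oplus T_{od}$ is two-dimensional, while both vanish in even degrees; thus degreewise the sequence reads $0\ra k\ra k^2\ra(\overline{\Hc}_n(\P))_r\ra 0$ for $r$ odd and is trivial otherwise, forcing $\overline{\Hc}_n(\P)\cong T_{od}$ and completing the induction. Finally, using the splitting $\Hc_n(\P)\cong\Hc_n(T)\oplus\overline{\Hc}_n(\P)$ coming from the retraction $\pi$, together with $\Hc_0(T)=T$ and $\Hc_n(T)=k$ for $n>0$, I obtain $\Hc_0(\P)=T\oplus 0=T$ and $\Hc_n(\P)=k\oplus T_{od}$ for $n>0$, which is the asserted answer.

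I expect the only genuine obstacle to be the identification of the cokernel $(T_{od}\oplus T_{od})/T_{od}$: since an injection between infinite-dimensional spaces can a priori have a quotient of any dimension, the conclusion $\overline{\Hc}_n(\P)\cong T_{od}$ really relies on the finite-dimensionality of each graded component together with the fact that every structure map respects the $t$-grading. Without this grading argument the inductive step would be inconclusive; everything else reduces to bookkeeping with the split exact sequences.
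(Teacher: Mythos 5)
Your proof is correct and follows essentially the same route as the paper: the paper likewise feeds the computed $\Hh_n(\P)$ and the known $\Hc_n(T)$ into the short exact sequences $0\ra \overline{\Hc}_{n-1}(\P)\ra \overline{\Hh}_n(\P)\ra \overline{\Hc}_n(\P)\ra 0$ and reads off the answer, so your induction is exactly its argument made explicit. Your two supplements --- the retraction $\pi:\P\ra T$, $s\mapsto 0$, $t\mapsto t$, which splits off $\Hc_n(T)$ as a direct summand, and the total $t$-degree grading, which forces the cokernel of $T_{od}\hookrightarrow T_{od}\oplus T_{od}$ to be $T_{od}$ --- are genuinely worthwhile, since the paper passes over both points in silence and, as you observe, an unstructured injection between these infinite-dimensional spaces would not determine its cokernel. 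The one step you should not treat as formal is the short-exactness of the reduced periodicity sequences themselves: for a general subalgebra the reduced theory only yields a long exact SBI sequence (short-exactness fails already for $k\subset k\times k$, even though a retraction exists there), and here it degenerates because the periodicity map $S$ vanishes on the reduced groups, which holds since $\ker\pi=sT$ is a square-zero ideal, equivalently $\P$ is graded with degree-zero component $T$; this is precisely what the paper's citation of Theorem 4.1.13 of Loday's book supplies, and your argument needs the same input.
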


\begin{remark} The Laurent polynomial ring $T$ is isomorphic to  the group
algebra $k[\mathbb{Z}]$. If making use of the results of \cite{GJ},
one can construct another spectral sequence
$\widetilde{E}^{r}_{p,q}$ with $\widetilde{E}^{2}_{p,q}=0$ for
$\forall q\neq 0,1$, converging to the cyclic homology of $\P$. In
this way, it remains to determine
$d^2:\widetilde{E}^{2}_{p+2,0}\longrightarrow
\widetilde{E}^{2}_{p,1}$ to achieve $\widetilde{E}^3$. Since this
spectral sequence collapses at $\widetilde{E}^3$, one then does
more.
\end{remark}

\vspace{1em}

\textbf{Acknowledgements.} The authors are supported in part by the
NNSF (Grants: 10971065, 10728102), the PCSIRT and the RFDP from the
MOE, the National and Shanghai Leading Academic Discipline Projects
(Project Number: B407). The first author would like to express her
gratitude to Professor Ezra Getzler for pointing out the flatness
condition. She is  indebted to her advisor Professor Marc Rosso for
his kind help. The authors also would like to thank Professor
Joachim Cuntz for his useful comments and encouragement.

\footnotesize{{\sc Jiao ZHANG \\Department of Mathematics, East
China Normal University,\\ 500 Dongchuan Road, Min Hang, 200241,
Shanghai,   P. R. China.\\\&\\Institut de Math\'{e}matiques de
Jussieu, Universit\'{e} Paris
Diderot--Paris VII,\\175 Rue du Chevaleret, 75013, Paris, France.\\
 E-mail:}
zhangjiao@math.jussieu.fr}

\footnotesize{{\sc Naihong HU \\Department of Mathematics, East
China Normal University,
\\ 500 Dongchuan Road, Min Hang, 200241, Shanghai,   P. R. China.\\
 E-mail:}
 nhhu@math.ecnu.edu.cn}
\vspace{2em}


\begin{thebibliography}{99}

\bibitem{AK} R. Akbarpour and M. Khalkhali, \textit{Hopf
algebra equivariant cyclic homology and cyclic homology of crossed
product algebras}, J. reine angew. Math. \textbf{559} (2003),
137--152.

\bibitem{ARS} N. Andruskiewitsch, D. Raford and H. Schneider, \textit{Complete
reducibility theorems for modules over pointed Hopf algebras},
arXiv:1001.3177v1.

\bibitem{AS} N. Andruskiewitsch and H. Schneider,  \textit{Pointed Hopf
algebras}, New directions in Hopf algebras, 1-68, Math. Sci. Res.
Inst. Publ. \textbf{\textbf{43}}, Cambridge Univ. Press, Cambridge,
2002.

\bibitem{BW} G. Benkart and S. Witherspoon,
 \textit{Two-parameter quantum groups (of type $A$)
and Drinfel'd doubles},
 Algebr. Represent. Theory \textbf{7} (2004), 261--286.

\bibitem{BGH} N. Bergeron, Y. Gao and N. Hu, \textit{Drinfel'd doubles and
Lusztig's symmetries of two-parameter quantum groups},
 J. Algebra \textbf{301} (2006), 378--405.

\bibitem{Br} J. L. Brylinski,
\textit{Cyclic homology and equivariant theories}, Ann. Inst.
Fourier. \textbf{37} (1987), 15--28.

\bibitem{CBMZ} S. Caenepeel, I. Bogdan, G. Militaru and S. Zhu,
  \textit{The factorization problem and the smash biproduct of algebras and coalgebras},
Algebr. Represent. Theory, \textbf{3} (2000), 19--42.

\bibitem{FT} B. L. Feigin and B. L. Tsygan,
 \textit{Additive K-theory, K-Theory},
Arithmetic and Geometry, Lect. Notes Math.
 \textbf{1289} (1986),
67--209.

\bibitem{GJ} E. Getzler and J. D. S. Jones, \textit{The cyclic homology of
crossed product algebras}, J. reine angew. Math. \textbf{445}
(1993), 163--174.

\bibitem{H} I. Heckenberger, \textit{Lusztig isomorphisms for Drinfel'd doubles of
bosonizations of Nichols algebras of diagonal type}, J. Algebra
\textbf{323} (8) (2010), 2130--2182.


\bibitem{Hu}
N. Hu, Quantum divided power algebra,
 $q$-derivatives, and some new quantum groups, J. Algebra \textbf{232} (2000), 507--540.


\bibitem{HP} N. Hu and Y. Pei, \textit{Notes on two-parameter quantum groups, (I)},
Sci. in China, Ser. A. \textbf{51} (6) (2008), 1101--1110.
(arXiv.math.QA/0702298).

\bibitem{HRZ} N. Hu, M. Rosso and H. Zhang,
\textit{Two-parameter quantum affine algebra
$U_{r,s}(\widehat{\mathfrak{sl}_n})$, Drinfel'd realization and
quantum affine Lyndon basis}, Comm. Math. Phys. \textbf{278} (2008),
453--486.

\bibitem{HW} N. Hu and X. Wang,
\textit{Convex PBW-type Lyndon bases and restricted two-parameter
quantum groups of type $B$}, J. Geom. Phys. \textbf{60} (2010),
430--453.

\bibitem{K} C. Kassel, \textit{Cyclic homology, comodules, and mixed
complexes}, J. Algebra \textbf{107} (1987), 195--216.

\bibitem{KR2} M. Khalkhali, B. Rangipour, \textit{On the generalized cyclic Eilenberg-Zilber
theorem}, Canad. Math. Bull. \textbf{47} (2004), 38--48.

\bibitem{L} J. L. Loday, \textit{Cyclic Homology}, Grundlehren der Mathematischen Wissenschaften, \textbf{301},
 Springer-Verlag, Berlin, 1998.

\bibitem{Ma} S. Majid, \textit{Physics for algebraists:
noncommutative and noncocommutative Hopf algebras by a
bicrossproduct construction}, J. Algebra \textbf{130} (1990),
17--64.

\bibitem{M} S. Majid, \textit{Foundations of  Quantum Group Theory},
Cambridge University Press, 1995.

\bibitem{N} V. Nistor, \textit{Group homology and the cyclic homology of crossed products},
 Invent. Math. \textbf{99} (1989), 411--424.

\bibitem{P} B. Pareigis,  \textit{A noncommutative noncocommutative
 Hopf algebra in ``nature''}, J. Algebra \textbf{70} (1981), 356--374.

\bibitem{PHR} Y. Pei, N. Hu and M. Rosso, \textit{Multiparameter quantum groups and
quantum shuffles, (I)}, in Quantum Affine Algebras, Extended Affine
Lie Algebras, and Their Applications, Contemp. Math., \textbf{506},
Amer. Math. Soc., 2010, pp. 145--171. arXiv:0811.0129.

\bibitem{S} M.E. Sweedler,  \textit{Hopf Algebras}, Mathematics Lecture Note
Series, W. A. Benjamin, Inc., New York, 1969.


\bibitem{Ta} R. Taillefer, \textit{Cyclic Homology of Hopf Algebras}, $K$-Theory
\textbf{24}  (2001), 69--85.

\bibitem{T1} M. Takeuchi, \textit{$Ext_{ad}(SpR,\nu^A)\cong
Br(A/k)$}, J. Algebra \textbf{67} (1980), 436--475.

\bibitem{T} M. Takeuchi, \textit{Matched pairs of groups and bismash
products of Hopf algebras}, Comm. Algebra \textbf{9} (1981),
841--882.



\bibitem{vv} A. van Daele and S. van Keer, \textit{The Yang-Baxter and
pentagon equation}, Compositio Math. \textbf{91} (1994), 201--221.

\bibitem{W} C. Weibel, \textit{An Introduction to Homological
Algebra}, Cambridge Studies in Advanced Mathematics, \textbf{38},
Cambridge University Press, Cambridge, 1994.


\end{thebibliography}
\end{document}